\documentclass[usenames,dvipsnames]{article}
\usepackage[utf8]{inputenc}
\usepackage{hyperref}
\usepackage{listings, color}
\usepackage{tabularx}
\usepackage{booktabs} 
\usepackage{bm}

\usepackage{tabularx}
\usepackage{caption}
\usepackage{wrapfig}
\usepackage{adjustbox}
\usepackage{multirow}
\captionsetup[table]{position=bottom}

\usepackage{mwe}
\usepackage[]{graphicx}
\usepackage{caption}
\usepackage{subcaption}
\usepackage{tikz}

%\addtolength{\oddsidemargin}{-.5in}
%\addtolength{\evensidemargin}{-.5in}
%\addtolength{\textwidth}{1in}

\addtolength{\oddsidemargin}{-.825in}
\addtolength{\evensidemargin}{-.825in}
\addtolength{\textwidth}{1.65in}

\addtolength{\topmargin}{-.875in}
\addtolength{\textheight}{1.75in}
 
\usepackage[T1]{fontenc}    % use 8-bit T1 fonts
\usepackage{hyperref}
\hypersetup{
    colorlinks=true,
    linkcolor=blue,
    filecolor=magenta,      
    urlcolor=cyan,
}
\usepackage{url}            % simple URL typesetting
\usepackage{booktabs}       % professional-quality tables
\usepackage{amsfonts}       % blackboard math symbols
\usepackage{nicefrac}       % compact symbols for 1/2, etc.
\usepackage{microtype}      %

\usepackage{graphicx}
\usepackage{booktabs} % for professional tables

\usepackage{framed}
\usepackage{amssymb}
\usepackage{amsfonts}
\usepackage{mathrsfs}
\usepackage{changepage}

\usepackage{mathtools}
\usepackage{array}
\usepackage{amsthm}
\usepackage{verbatim} 
\usepackage{enumerate}
\usepackage{bbm}
\usepackage{commath}
\usepackage{wrapfig}
\usepackage{amsbsy}
\usepackage{float}
\usepackage{amsmath}
\usepackage{algorithm}
\usepackage[numbers]{natbib}
\usepackage{tabularx}
\usepackage{listings}
\usepackage{enumitem}

\usepackage{ssArxiv}

\newtheorem{prop}{Proposition}

\newtheorem{lemma}[prop]{Lemma}

\theoremstyle{definition}

\title{Introducing Discrepancy Values of Matrices with Application to Bounding Norms of Commutators\thanks{This research is partially supported by \textsc{Nsf-Career} grant (IIS-1846088), \textsc{Nsf-Tripods+X} grant (DMS-1839258), and the EECS MathWorks Engineering Fellowship.}}

\author{\name{Pourya Habib Zadeh}\email{pourya@mit.edu}\\
\name{Suvrit Sra}\email{suvrit@mit.edu}\\\addr{Massachusetts Institute of Technology, Cambridge, MA, USA}}

\numberwithin{equation}{section}

\begin{document}

%\leftskip=100pt\rightskip=100pt
\maketitle
%\leftskip=0pt\rightskip=0pt

\begin{abstract} 
 We introduce \emph{discrepancy values}, quantities inspired by the notion of the spectral spread of Hermitian matrices.  We define them as the discrepancy between two consecutive Ky-Fan-like seminorms. As a result, discrepancy values share many properties with singular values and eigenvalues, yet are substantially different to merit their own study. We describe key properties of discrepancy values, and establish several tools such as representation theorems,  majorization inequalities, convex formulations, etc., for working with them. As an important application, we illustrate the role of discrepancy values in deriving tight bounds on the norms of commutators. 
\end{abstract}

\vspace{-4pt}
\section{Introduction}
Eigenvalues and singular values are fundamental objects associated with linear transformations; their importance across all of science and engineering hardly needs an introduction~\cite{horn2012matrix}. A lesser-known, but important quantity based on eigenvalues is \emph{spectral spread}, defined for an $n \times n$ Hermitian matrix $\ma$ as
\begin{equation}
  \label{eq:1}
  \text{spr}^+(\ma) := (\lambda_i(\ma)-\lambda_{n-i+1}(\ma))_{i = 1, \dots, \floor*{n/2}},
\end{equation}
where the eigenvalues $\lambda_i$ are ordered decreasingly. The spectral spread was introduced in~\cite{knyazev2010rayleigh} as a measure of the dispersion of the eigenvalues of Hermitian matrices; it is a vector extension of the better known \emph{spread} of Hermitian operators~\cite{bhatia2008commutators}, which is equal to the greatest eigenvalue minus the smallest eigenvalue of the Hermitian matrix. %$\text{spd}(\ma):=\lambda_{\max}(\ma)-\lambda_{\min}(\ma)$. % There exist many useful objects that quantify properties of linear transformations, each of which extracts certain aspects of a linear map. To name two such notions, we can point out  eigenvalues and singular values of linear operators. Eigenvectors are vectors whose direction is invariant under the linear transformation, and eigenvalues are the corresponding scaling of those vectors under the transformation.
%There are many possible definitions for singular values; however, they can be best described geometrically. Consider a unit sphere, a (full rank) linear map would transform the sphere into an ellipsoid. 
%The length of the semi-axes of this ellipsoid is the non-zero singular values, and the directions of the semi-axes are the singular vectors. The eigenvalue and singular values are extremely useful tools in linear algebra and are prevalent in engineering and science~\cite{horn2012matrix}.
%The spectral spread of Hermitian matrices is another fundamental example. It was first introduced in This measure
Spectral spread derives its importance from a class of useful inequalities it satisfies, see e.g.,~\cite{massey2020absolute, massey2021norm, massey2021spectral}. But its definition is limited to Hermitian matrices and for general square (complex) matrices there is no clear analog.

One could define the spectral spread for the general matrix $\ma$ as an extension of the notion of the spread of square matrices~\cite{mirsky1956spread}: $\text{spd}(\ma):= \max_{i,j} |\lambda_{i}(\ma)-\lambda_j(\ma)|$. After finding the spread, we repeatedly remove the two indices that maximize the spread, find the spread of the remaining eigenvalues, and insert it in a vector. When $\ma$ is a Hermitian matrix, the obtained vector is equivalent to~\ref{eq:1}. However, for non-Hermitian matrices, this quantity does not enjoy many nice properties that the spectral spread of a Hermitian function possesses. Therefore, we need to find an alternative analog that satisfies all the nice properties of the spectral spread.

%It is done consecutively. Since $\max_{i,j} |\lambda_i(\ma) − \lambda_j(\ma)|$ is attained for some $i, j$, the next step is to remove these two eigenvalues of $\ma$ and consider the spread of the remaining $n − 2$ eigenvalues. This process could go on.

We motivate, propose, and analyze an analog that \emph{is} defined for general square matrices: namely, \emph{discrepancy values}, which we define as the discrepancy between consecutive Ky-Fan-like seminorms (see Def.~\ref{Def:radii}). We will show that discrepancy values can be essentially thought of as two copies of spectral spread. Alternatively, they can also  be thought of as a (vector-valued) cost of approximating linear operators via scalar multiples of the identity, a topic of some importance---see e.g.,~\cite{bhatia1999orthogonality,stampfli1970norm, Apostol84}.

%After defining discrepancy values, we will prove several important theorems about them, including the connection that they have with singular values, Ky-Fan norms, and the principal angles between subspaces. We will also discover the invariances they satisfy, as well as several majorization inequalities. 
%We will see that many important results about the spectral spread also hold for discrepancy values. We then employ the discrepancy values and the set of tools that we develop to prove a collection of other results, of which the most important consequence is a tight majorization inequality satisfied by the singular values of the commutator of two square matrices.

\subsection{Outline of paper and main contributions}
We first define discrepancy values (Definition~\ref{Def:radii}) in Section~\ref{sec: def}; next, we elucidate their connections with Ky-Fan norms (Theorems~\ref{max_delta},~\ref{lemma_min}), singular values (Proposition~\ref{singeqdisc}, and inequality~\eqref{eq:6}), and principal angles between subspaces (Theorem~\ref{Thm:principalAngle}) in Section~\ref{sec: properties}. Through several results we explore invariances and majorization inequalities satisfied by discrepancy values while underscoring their analogy with spectral spread (see e.g., Corollary~\ref{corr:block}).
% =======
% After defining discrepancy values (Definition~\ref{Def:radii}) in Section~\ref{sec: def}, we elucidate their connections with Ky-Fan norms (Theorems~\ref{max_delta},~\ref{lemma_min}), singular values (Proposition~\ref{singeqdisc}, and Inequality~\eqref{eq:6}), and principal angles between subspaces~\cite{qiu2005unitarily} (see Theorem~ \ref{Thm:principalAngle}) in Section~\ref{sec: properties}. Therein, through several theorems and supporting results we explore invariances and majorization inequalities satisfied by discrepancy values while underscoring their analogy with spectral spread (see e.g., Corollary~\ref{corr:block}).

In Section~\ref{sec: application} we apply our results on discrepancy values to study commutators, a subject that has received extensive   interest~\cite{kittaneh2007inequalities,bottcher2005big,hirzallah2009commutator,lu2017commutator,wenzel2010impressions,wenzel2015strange,stampfli1970norm}. Specifically, we obtain tight bounds on norms of commutators using discrepancy values (Corollaries~\ref{coroll:zero1},~\ref{generalMajIneq}), and determine when two Hermitian matrices with fixed eigenvalues are maximally non-commutative (Theorem~\ref{thm1}). %We will see that many important results about the spectral spread also hold for discrepancy values. We then employ the discrepancy values and the set of tools that we develop to prove a collection of other results, of which the most important consequence is a tight majorization inequality satisfied by the singular values of the commutator of two square matrices.
%Commutators are pivotal objects in subjects such as Lie algebra and group theory. Algebraically, the commutator of two matrices $\ma$ and $\mb$ is defined using the formula $[\ma,\mb]=\ma\mb-\mb\ma$. It can be viewed as a quantity that measures how much two operators are (non-)commuting. 
%Finding bounds on the norms of such quantities has been investigated extensively in the literature~\cite{kittaneh2007inequalities,bottcher2005big,hirzallah2009commutator,lu2017commutator,wenzel2010impressions,wenzel2015strange,stampfli1970norm}.
%As we will show in the paper, we can obtain sharp bounds on the norm of commutators using discrepancy values, and find when two Hermitian matrices with fixed eigenvalues are maximally non-commutative.
Thereafter, in Section~\ref{sec: calculation}, we propose approaches for calculating discrepancy values via semi-definite programming (see e.g., Equation~\eqref{SDP_general}). %Finally, we discuss that our result can be easily be extended to the case of compact operators on Hilbert space.
We conclude with a discussion about the extension of our results to compact operators on Hilbert spaces, and with a conjecture that posits a stronger majorization inequality involving commutators and discrepancy values (Conjecture~\ref{conj:squareM}).

%\subsection{Contributions} In light of the above, we outline our main contributions.
% \begin{itemize}
% \item Introducing the notion of discrepancy values as the generalization of spectral spread of Hermitian matrices to any square matrices .
% \item Proving several important results about this object such as the equivalence of two representation of the discrepancy values and many majorization bounds (Section~\ref{sec: properties}).
% \item Using this new notion to come up with tight bounds for the commutators and consequently finding when two Hermitian matrices with fixed eigenvalues become maximally non-commutative. ().
% \item Providing methods to efficiently compute the discrepancy values of matrices (Section~\ref{sec: calculation}).
% \end{itemize}

%%% Local Variables:
%%% mode: latex
%%% TeX-master: "main"
%%% End:

\section{Notation and Preliminaries}\label{sec: preli}
Bold letters denote vectors and matrices, both of which are assumed to be complex unless specified otherwise. The vectors $\vsigma(\ma)$ and $\vlambda(\ma)$ respectively denote singular values and eigenvalues of $\ma$. % $m \times n$ matrix $\ma$ with complex-valued entries, denoted by $\ma \in \mm_{m,n}(\mathbb{C})$.
We assume that the singular values of any matrix, and eigenvalues of Hermitian matrices are ordered decreasingly. Let $\mi_n$ be the $n \times n$ identity; and $\mj_n$ the exchange matrix that has  ones on its antidiagonal and zeros elsewhere; $\mathbf{1}_k$ is a vector with $k$ ones, followed by $n-k$ zeros. For $\vx \in \mathbb{R}^n$, $\vx^{\downarrow}$ is the vector with coordinates arranged  nonincreasingly. % ; i.e., $x^{\downarrow}_{i}\geq x^{\downarrow}_{j}$ for $1\leq i \leq j \leq n$. 
The \emph{Schatten norm} of an $n\times n$ matrix $\ma$ is defined by
$\|\ma\|_{p}:=(\sum_{i=1}^{n}\sigma^{p}_i(\ma))^{1/p}$, for $p \geq 1$, while its \emph{Ky-Fan} norms are defined as
$\|\ma\|_{(k)}:=\sum_{i=1}^k\sigma_i(\ma)$, for $k=1,\dots, n$. Ky-Fan norms enjoy the following maximal formulation:
\begin{equation}
  \|\ma\|_{(k)} = \max \Big\{ \Big| \nlsum_{i=1}^k \langle \ma\vx_i,\vy_i \rangle \Big|\ \ :\ \ \{\vx_j\}_{j=1}^k \; \text{o.n.},\ \{\vy_j\}_{j=1}^k \; \text{o.n.}\Big\},
   \label{eq:KF}
\end{equation}
where `o.n.' means that the set of vectors $\{\vx_j\}_{j=1}^k$ is orthonormal.

An $n\times r$ matrix $\ma$ is called an isometry if $\ma^*\ma=\mi_{r}$ for some $r \leq n$, and a partial isometry of order $k \leq n=r$ if $\ma = \mb \mc^*$ where $\mb \in \C^{n\times k}$ and $\mc \in \C^{n\times k}$ are both isometries. Recall that a norm $\|\cdot\|$ is \emph{unitarily invariant} if $\|\ma\|= \|\mU\ma\mv\|$ for all unitary matrices $\mU,\mv$. In the sequel, we use $\vertiii{\cdot}$ to denote such norms. Both Schatten and Ky-Fan norms are unitarily invariant.

Finally, recall that for $\vx,\vy \in \mathbb{R}^n$, we say that $\vx$ is \emph{weakly majorized} by $\vy$, denoted $\vx \prec_w \vy$, if the following set of inequalities hold:
\begin{equation*}
    \nlsum_{i=1}^k x_i^{\downarrow} \leq \nlsum_{i=1}^k y_i^{\downarrow}, \quad\text{for}\ k=1,\ldots,n.
\end{equation*}
Moreover, $\vy$ majorizes $\vx$, denoted $\vx \prec \vy$, when the equality $\sum_{i=1}^n x_i= \sum_{i=1}^n y_i$ also holds. Many fundamental majorization inequalities are known for eigenvalues and singular values; for instance for two square matrices $\ma$ and $\mb$ it is known that
\begin{align*}
  %\label{eq:2}
  \vsigma(\ma+\mb) & \prec_w \vsigma(\ma)+\vsigma(\mb),\quad\text{and}\quad
  %\label{eq:3}
  \vsigma(\ma\mb) \prec_w \vsigma(\ma)\vsigma(\mb),
\end{align*}
where $\vsigma(\ma)\vsigma(\mb)$ is the entrywise product of the vectors of the singular values. We refer the readers to~\cite{ando1994majorizations, bellman1997introduction, bhatia2013matrix, hiai2014introduction, horn2012matrix, zhan2013matrix}, for a deeper study of these topics. %more information about the majorization inequalities, unitarily invariant norms, and variational characterization of Ky-Fan norms.

%%% Local Variables:
%%% mode: latex
%%% TeX-master: "main"
%%% End:

\section{Definition of discrepancy values}\label{sec: def}
Now we are ready to introduce discrepancy values. We need to first introduce a new class of seminorms that are obtained via an innocuous-looking modification to the Ky-Fan norm~\eqref{eq:KF}: namely, an additional partial-orthogonality constraint $\sum_j\langle \vx_j, \vy_j\rangle=0$.  Why we consider the constraint  in~\eqref{main_def}, and what ramifications it has will unravel in the subsequent sections.
\begin{definition}
\label{Def:radii}
Let $\ma \in \C^{n\times n}$; we define its $k\textsuperscript{th}$ \emph{discrepancy seminorm} as
\begin{equation}
\label{main_def}
   \|\ma\|_{(k)}^{\delta} := \max_{\substack{\{\vx_j\}_{j=1}^k \; \text{o.n.} \\\{\vy_j\}_{j=1}^k \; \text{o.n.}}} \bigg| \sum_{i=1}^k \langle \ma\vx_i,\vy_i \rangle\bigg|,\quad\text{s.t.}\ \nlsum_{j=1}^k\langle \vx_j, \vy_j \rangle=0,\ \text{for } 1\le k \le n.
\end{equation}
Further, let $\|\ma\|_{(0)}^{\delta}=0$. Using~\eqref{main_def} we define the $k\textsuperscript{th}$ \emph{discrepancy value} of $\ma$ as
\begin{equation}
  \label{eq:4}
    \delta_k(\ma) := \|\ma\|_{(k)}^{\delta} - \|\ma\|_{(k-1)}^{\delta}.
\end{equation}
\end{definition}
Clearly, $\|\ma\|^{\delta}_{(k)} \leq \|\ma\|_{(k)}$; equivalently, writing $\vdelta(\ma) := (\delta_1(\ma), \dots, \delta_n(\ma))$ (decreasingly sorted), we obtain $\vdelta(\ma)\prec_w\sigma(\ma)$. Moreover, $\sigma_1(\ma) \geq \delta_1(\ma)\geq \delta_2(\ma) \geq \dots \geq \delta_n(\ma)\geq 0$. But unlike singular values, there may not exist two sets of orthonormal vectors $\{\vx_j\}$ and $\{\vy_j\}$ such that $\vx_i \bot \vy_i$ and  $\delta_i(\ma) = |\langle \ma\vx_i,\vy_i \rangle|$. Nevertheless, we will call the vectors $\vx_i$ and $\vy_i$ in \eqref{main_def}, the left and right \emph{discrepancy vectors}. 

%From the definition~\eqref{eq:4} our choice of name is evident, since the discrepancy value is defined as the discrepancy (difference) between two consecutive seminorms. Equivalently, one could define $\delta_i(\ma)$ first, and then define $\|\ma\|_{(k)}^{\delta} = \sum_{i=1}^k \delta_i(\ma)$. We will also write $\vdelta(\ma) = (\delta_1(\ma), \dots, \delta_n(\ma))$; ordered in a decreasing manner. In analogy with singular vectors, 
%Note that the optimization problem in formula~\ref{main_def} is always feasible and ; hence, $\|\ma\|^{\delta}_{(k)}$ is well-defined.
%We can see that $\sigma_1(\ma) \geq \delta_1(\ma)\geq \delta_2(\ma) \geq \dots \geq \delta_n(\ma)\geq 0$, hence $\delta_i$ is always finite and unique, thus well-defined.
%In fact, we can define the singular values as $\sigma_k(\ma) = \|\ma\|_{(k)} - \|\ma\|_{(k-1)}$, and use the maximal representation of the Ky-Fan norm~\ref{eq:KF} (assuming that $\|\ma\|_{(0)}=0$).

Before proceeding to our main results, let us note down more compact forms for the discrepancy seminorms and Ky-Fan norms.

\begin{proposition}
  \label{prop:maxrep}
For an arbitrary $n \times n$ matrix $\ma$, we have the following:
\begin{align}
    \|\ma\|_{(k)} &= \max_{\substack{ \mm \in \mathcal{P}_{k}(n)}} \R \trace(\ma \mm), \label{compact_Kynorm} \\ 
    \|\ma\|_{(k)}^{\delta} &= \max_{\substack{ \mm \in \mathcal{P}^0_{k}(n)}} \R \trace(\ma \mm), \label{compact_Discnorm}
\end{align}
where $\mathcal{P}_{k}(n):=\{\mx \in \C^{n\times n} : \mx = \mv \mU^*, \mU^*\mU = \mi_k, \mv^*\mv = \mi_k \}$ is the set of $n \times n$ partial isometries of rank $k$, and $\mathcal{P}^0_{k}(n):=\{\mx \in \C^{n\times n} : \mx = \mv \mU^*, \mU^*\mU = \mi_k, \mv^*\mv = \mi_k, \trace(\mx)=0 \}$ is the set of $n \times n$ traceless partial isometries of rank $k$.
\end{proposition}
\begin{proof}
Given a positive integer $k\leq n$, the set
$$\bigg\{\sum_{j=1}^k \langle \ma\vx_i,\vy_i \rangle \ \ :\ \   \{\vx_j\}_{j=1}^k \; \text{o.n.}, \{\vy_j\}_{j=1}^k \; \text{o.n.}, \sum_{j=1}^k \langle \vx_j,\vy_j \rangle = 0\bigg\}$$ forms a circle in the complex plane whose center is located at the origin. To observe this fact, note that we can multiply each vector in the set $\{\vy_j\}_{j=1}^k$ with $e^{i \theta}$ and do not violate the conditions of the set while rotating the value of $\,\sum_{j=1}^k \langle \ma\vx_i,\vy_i \rangle$ arbitrarily in the complex plane.
Therefore, we can replace the modulus in the definitions of $\|\cdot\|_{(k)}$ and $\|\cdot\|^{\delta}_{(k)}$ with the real part of the complex number. 
Thus,
\begin{equation*}
\begin{split}
   \|\ma\|_{(k)} &= \max_{\substack{\|\vx_i\|=\|\vy_i\|=1 \\ \vx_1 \bot \dots \bot \vx_k \\ \vy_1 \bot \dots \bot \vy_k}} \R \sum_{i=1}^k \langle \ma\vx_i,\vy_i \rangle = \max_{\substack{\mU^*\mU = \mi_k \\ \mv^*\mv = \mi_k}} \R \trace(\ma \mv \mU^*), \\
   \|\ma\|_{(k)}^{\delta} &= \max_{\substack{\|\vx_i\|=\|\vy_i\|=1 \\ \vx_1 \bot \dots \bot \vx_k \\ \vy_1 \bot \dots \bot \vy_k \\ \sum_{i=1}^k\langle \vx_i, \vy_i \rangle=0}} \R \sum_{i=1}^k \langle \ma\vx_i,\vy_i \rangle = \max_{\substack{\mU^*\mU = \mi_k \\ \mv^*\mv = \mi_k \\ \trace(\mU^*\mv)=0}} \R \trace(\ma \mv \mU^*).
\end{split}
\end{equation*}
Setting $\mm = \mv\mU^*$ we obtain the desired result.
\end{proof}

From the definition of discrepancy values, the following invariances are immediate.

\begin{proposition}[invariances of discrepancy]
Let $\ma \in \C^{n\times n}$. Discrepancy values display the following invariances:
\begin{itemize}
    \item(Unitary conjugation): $\vdelta(\mU\ma\mU^*) = \vdelta(\ma)$, for all unitary matrix $\mU$.
    \item(Conjugate transpose): $\vdelta(\ma^*) = \vdelta(\ma)$.
    \item(Phase): $\vdelta(e^{i\theta}\ma) = \vdelta(\ma)$, for all $\theta \in [0,2\pi)$.
    \item(Shift): $\vdelta(\ma -\alpha\mi_n) = \vdelta(\ma)$, for all $\alpha \in \mathbb{C}$.
\end{itemize}
\end{proposition}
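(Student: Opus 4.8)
The plan is to reduce all four invariances to a single observation: each listed transformation preserves \emph{every} discrepancy pseudo-norm $\|\cdot\|_{(k)}^{\delta}$. Indeed, since $\delta_k(\ma) = \|\ma\|_{(k)}^{\delta} - \|\ma\|_{(k-1)}^{\delta}$ with $\|\cdot\|_{(0)}^{\delta}=0$, if a map $T$ satisfies $\|T\ma\|_{(k)}^{\delta} = \|\ma\|_{(k)}^{\delta}$ for all $k$, then $\delta_k(T\ma) = \delta_k(\ma)$ for every $k$, and hence $\vdelta(T\ma) = \vdelta(\ma)$. So it suffices to verify invariance of the pseudo-norms, and for this the compact representation~\eqref{compact_Discnorm}, namely $\|\ma\|_{(k)}^{\delta} = \max_{\mm \in \mathcal{P}^0_{k}(n)} \R\trace(\ma\mm)$, is the cleanest tool.

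For the eigenvalue rotation the objective itself is unchanged: $\bigl|\langle e^{i\theta}\ma\vx_i,\vy_i\rangle\bigr| = \bigl|\langle \ma\vx_i,\vy_i\rangle\bigr|$, and the feasible set in~\eqref{main_def} does not depend on $\ma$, so the two maximizations are literally identical. For unitary similarity and conjugate transpose I would instead exhibit a bijection of the feasible set $\mathcal{P}^0_{k}(n)$ that transports the objective. For unitary similarity, the map $\mm \mapsto \mU^*\mm\mU$ sends $\mv\mU_0^*$ to $(\mU^*\mv)(\mU^*\mU_0)^*$, preserves both isometry conditions (since $\mU$ is unitary), and preserves the constraint $\trace(\mm)=0$ by cyclicity of the trace; combined with $\R\trace(\mU\ma\mU^*\mm) = \R\trace(\ma\,\mU^*\mm\mU)$, the two maxima coincide. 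For conjugate transpose, the involution $\mm\mapsto\mm^*$ maps $\mathcal{P}^0_{k}(n)$ onto itself (it swaps the roles of the two isometries and conjugates the trace, so tracelessness is retained), and $\R\trace(\ma^*\mm)=\R\,\overline{\trace(\ma\mm^*)}=\R\trace(\ma\mm^*)$, which yields the equality after taking the maximum.

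The one case that is genuinely special---and the place where the traceless constraint earns its keep---is the eigenvalue shift. Here I would write, for any $\alpha \in \mathbb{C}$ and any $\mm \in \mathcal{P}^0_{k}(n)$,
\[
\R\trace\bigl((\ma-\alpha\mi_n)\mm\bigr) = \R\trace(\ma\mm) - \R\bigl(\alpha\,\trace(\mm)\bigr) = \R\trace(\ma\mm),
\]
since $\trace(\mm)=0$ by the definition of $\mathcal{P}^0_{k}(n)$. Maximizing over $\mm$ then gives $\|\ma-\alpha\mi_n\|_{(k)}^{\delta} = \|\ma\|_{(k)}^{\delta}$ for every $k$, and hence the claimed shift invariance of $\vdelta$.

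I do not expect a real obstacle in any of the four items; the entire argument rides on the compact trace representation together with elementary bijections of the feasible set. The only subtlety worth highlighting explicitly is conceptual rather than technical: the same shift invariance \emph{fails} for the ordinary Ky-Fan norms precisely because the feasible set $\mathcal{P}_{k}(n)$ in~\eqref{compact_Kynorm} contains matrices of nonzero trace, so the cancellation of the $\alpha\,\trace(\mm)$ term above is exactly the feature that distinguishes the discrepancy norms from their Ky-Fan counterparts.
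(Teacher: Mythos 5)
Your proposal is correct and matches what the paper intends: the paper states these invariances without proof (``one can trivially observe''), and your verification via the compact representation~\eqref{compact_Discnorm} is the natural fleshing-out of that observation, using the paper's own trace formulation. You were also careful exactly where care is needed---shift invariance does \emph{not} follow termwise from the modulus form~\eqref{main_def}, since only $\sum_j \langle \vx_j,\vy_j\rangle=0$ is constrained, so routing it through the traceless constraint $\trace(\mm)=0$ is precisely the right move.
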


%%% Local Variables:
%%% mode: latex
%%% TeX-master: "main"
%%% End:

\section{Main results}\label{sec: properties}
In this section, we study fundamental properties of discrepancy values that should be of broader interest. We first take a closer look at discrepancy seminorms and their relation to Ky-Fan norms in Section~\ref{sec:discVsKF}. Then, in Section~\ref{sec:discHerm} we uncover the precise relation between discrepancy values and spectral spread of Hermitian matrices. In Section~\ref{sec:subclasses}, we discuss special classes of matrices that have zero-one discrepancy values, and in analogy with singular values, we also advance the notion of ``discrepancy rank.'' Basic majorization inequalities and a generalization to direct sums of matrices are covered in Sections~\ref{sec:basicMajor} and \ref{sec:directsum}, respectively.

\subsection{Discrepancy seminorms and Ky-Fan norms}
\label{sec:discVsKF}
We develop a more thorough connection between discrepancy seminorms and Ky-Fan norms. We will need the following simple observation. %Before mentioning the theorem, we need the following lemma.
\begin{lemma}
\label{lem:ortheq}
For any two $n \times m$ isometries $\mU, \mv$, where $n \geq m$, there exists a $n \times n$ unitary matrix $\mq$ such that $\mq\mU=\mv$.
\end{lemma}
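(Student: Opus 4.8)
The plan is to reduce the statement to the standard fact that an orthonormal set in $\mathbb{C}^n$ can be completed to an orthonormal basis. Since $\mU$ is an isometry, $\mU^*\mU = \mi_m$ means precisely that the $m$ columns of $\mU$ form an orthonormal set $\{u_1,\dots,u_m\}$ in $\mathbb{C}^n$; the same holds for the columns $\{v_1,\dots,v_m\}$ of $\mv$. The key observation is that a unitary matrix sending one orthonormal frame to another always exists, so I just need to turn both frames into honest orthonormal bases and read off the transition map.

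Concretely, I would first complete each isometry to a full unitary. Because $n\geq m$, the span of $\{u_1,\dots,u_m\}$ is an $m$-dimensional subspace whose orthogonal complement has dimension $n-m$; applying Gram--Schmidt to any basis of that complement produces orthonormal vectors $u_{m+1},\dots,u_n$, and appending them gives an $n\times n$ unitary $\widetilde{\mU}$ whose first $m$ columns are exactly those of $\mU$. Performing the identical construction on $\mv$ yields an $n\times n$ unitary $\widetilde{\mv}$ whose first $m$ columns are those of $\mv$.

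Next I would define $\mq := \widetilde{\mv}\,\widetilde{\mU}^*$. As a product of two unitary matrices $\mq$ is unitary, since $\mq^*\mq = \widetilde{\mU}\,\widetilde{\mv}^*\widetilde{\mv}\,\widetilde{\mU}^* = \widetilde{\mU}\,\widetilde{\mU}^* = \mi_n$. By construction $\mq\,\widetilde{\mU} = \widetilde{\mv}$, and comparing the first $m$ columns on both sides of this identity gives $\mq\,u_i = v_i$ for $i=1,\dots,m$, i.e.\ $\mq\mU = \mv$, which is the desired conclusion.

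The only step that deserves genuine attention is the completion argument, and it is precisely where the hypothesis $n\geq m$ is used: if $n<m$ no such extension (nor even orthonormal columns) could exist. Everything else is routine verification of unitarity and of the column-wise identity, so I do not expect a real obstacle beyond stating the completion cleanly. (One could also phrase the same proof via the CS decomposition or by invoking Witt's extension theorem for the standard Hermitian form, but the elementary Gram--Schmidt completion is the most self-contained route.)
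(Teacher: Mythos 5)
Your proof is correct and is essentially the paper's argument: the paper's one-line proof invokes the factorization $\mU = \mr\begin{bmatrix}\mi_{m} \\ \mathbf{0}\end{bmatrix}$ for a suitable unitary $\mr$, which is exactly the unitary completion of an isometry that you construct explicitly via Gram--Schmidt. Your $\mq = \widetilde{\mv}\,\widetilde{\mU}^{*}$ is precisely the map the paper leaves implicit, so you have simply filled in the details of the same approach.
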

\begin{proof}
Immediate upon noting that we can write
$\mU_{n\times m}=\mr_{n \times n}{\scriptsize\begin{bmatrix}\mi_{m}\\ \mathbf{0}_{(n-m)\times m}\end{bmatrix}}$, for an appropriate unitary matrix $\mr$.
\end{proof}

The first main result of this section is Theorem~\ref{max_delta} that shows how Ky-Fan norms can be realized via maximal discrepancy seminorms of unitarily transformed matrices.
\begin{theorem}
\label{max_delta}
For the square matrix $\ma$, we have
\begin{equation}
    \label{eq:equiv}
    \|\ma\|_{(k)} = \max_{\mq \in \mathbf{U}(n)} \|\ma\mq\|^{\delta}_{(k)},
\end{equation}
where $\mathbf{U}(n)$ denotes the set of $n \times n$ unitary matrices.
\end{theorem}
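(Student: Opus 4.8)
The plan is to prove the two inequalities separately, the substantive content lying in the ``$\ge$'' direction, where I must build an explicit maximizing unitary $\mq$.

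The easy direction is $\max_{\mq\in\mathbf{U}(n)}\|\ma\mq\|^{\delta}_{(k)}\le\|\ma\|_{(k)}$. For any unitary $\mq$, the feasible set defining $\|\ma\mq\|^{\delta}_{(k)}$ is the feasible set of $\|\ma\mq\|_{(k)}$ restricted by the extra constraint $\sum_{i=1}^{k}\langle\vx_i,\vy_i\rangle=0$; since both maximize the same objective, $\|\ma\mq\|^{\delta}_{(k)}\le\|\ma\mq\|_{(k)}$. Because right multiplication by a unitary leaves the singular values unchanged, $\|\ma\mq\|_{(k)}=\|\ma\|_{(k)}$, and taking the maximum over $\mq$ gives the claim. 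Continuity of $\mq\mapsto\|\ma\mq\|^{\delta}_{(k)}$ together with compactness of $\mathbf{U}(n)$ guarantees that the maximum is attained, so it remains only to exhibit one $\mq$ at which the value $\|\ma\|_{(k)}$ is reached.

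For the reverse direction I would work with the compact trace representations \eqref{compact_Kynorm} and \eqref{compact_Discnorm}. Let $\mm=\mv\mU^{*}\in\mathcal{P}_{k}(n)$, with $\mU,\mv$ being $n\times k$ isometries, attain $\|\ma\|_{(k)}=\R\trace(\ma\mm)$. The goal is to find a unitary $\mq$ for which $\mn:=\mq^{*}\mm$ lies in $\mathcal{P}^{0}_{k}(n)$: then $\R\trace(\ma\mq\mn)=\R\trace(\ma\mm)=\|\ma\|_{(k)}$, and \eqref{compact_Discnorm} immediately yields $\|\ma\mq\|^{\delta}_{(k)}\ge\|\ma\|_{(k)}$. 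Observe that $\mn=(\mq^{*}\mv)\mU^{*}$ is automatically a rank-$k$ partial isometry, since $\mq^{*}\mv$ is again an isometry; the only nontrivial requirement is the traceless condition $\trace(\mn)=\trace(\mU^{*}\mq^{*}\mv)=0$.

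The crux is therefore to produce a unitary $\mq$ making this trace vanish. Using Lemma~\ref{lem:ortheq} I can write $\mv=\mq_{1}\mU$ for some unitary $\mq_{1}$, so that $\trace(\mU^{*}\mq^{*}\mv)=\trace(\mU^{*}\mw\mU)$ with $\mw:=\mq^{*}\mq_{1}$ ranging over all unitaries as $\mq$ does; this quantity is exactly the trace of the compression of $\mw$ to the $k$-dimensional column space of $\mU$. To make it zero I extend the orthonormal columns of $\mU$ to an orthonormal basis $\{\vx_1,\dots,\vx_n\}$ of $\mathbb{C}^{n}$ (with $\vx_1,\dots,\vx_k$ the columns of $\mU$) and take $\mw$ to be the cyclic shift $\vx_j\mapsto\vx_{j+1}$ with indices modulo $n$; each diagonal entry $\langle\mw\vx_j,\vx_j\rangle=\langle\vx_{j+1},\vx_j\rangle$ then vanishes, so $\mU^{*}\mw\mU$ is traceless. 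Unwinding gives the required unitary $\mq=\mq_{1}\mw^{*}$. I expect the main obstacle to be precisely this construction---guaranteeing a traceless rotation of the optimal partial isometry that does not disturb the objective value---and I note that it needs $n\ge2$ (for $n=1$ no traceless rank-one partial isometry exists and the statement degenerates), which I would record as a standing hypothesis.
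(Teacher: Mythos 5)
Your proof is correct, and it takes a genuinely (if mildly) different route in the hard direction. The paper also works from the compact trace representations and Lemma~\ref{lem:ortheq}, but it spends \emph{two} unitary degrees of freedom: using the similarity invariance $\vdelta(\mr^*\ma\mr)=\vdelta(\ma)$ it rewrites the right-hand side as $\max_{\mq,\mr\in\mathbf{U}(n)}\max_{\trace(\mU^*\mv)=0}\R\trace\big(\ma(\mq\mv)(\mr\mU)^*\big)$, and then applies Lemma~\ref{lem:ortheq} twice to transport an \emph{arbitrary} traceless pair of isometries $(\mU,\mv)$ onto the Ky-Fan optimizers $(\hat{\mm},\hat{\mn})$; this implicitly leans on the feasibility assertion made after Definition~\ref{Def:radii} (that some traceless pair exists), which the paper never constructs. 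You instead fix the Ky-Fan optimal partial isometry $\mm=\mv\mU^*$, use only the right-multiplication freedom, and reduce everything to producing a single unitary whose compression to the range of $\mU$ is traceless, which you build explicitly as a cyclic shift of an orthonormal basis extending the columns of $\mU$ (all diagonal entries $\langle \vx_{j+1},\vx_j\rangle$ vanish for $n\ge 2$, so the $k\times k$ compression is traceless for every $k$, including $k=n$). The unwinding $\mq=\mq_1\mathbf{W}^*$ is checked correctly. What each approach buys: yours is constructive and self-contained --- it exhibits a maximizing $\mq$, proves in passing the feasibility the paper only asserts, and correctly records the standing hypothesis $n\ge 2$ (for $n=1$ the constraint $\langle\vx_1,\vy_1\rangle=0$ with unit vectors is infeasible, a degeneracy the paper glosses over); the paper's version buys brevity, since the extra unitary $\mr$ means the trace constraint never has to be engineered at the optimum.
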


\begin{proof}
We want to show that $\max_{\mq \in \mathbf{U}(n)} \sum_{i=1}^k \delta_i(\ma\mq) = \|\ma\|_{(k)}$.
Trivially, $\vdelta(\ma \mq) \prec_w \vsigma(\ma)$ for any unitary $\mq$; i.e., $\max_{\mq \in \mathbf{U}(n)} \sum_{i=1}^k \delta_i(\ma\mq) \leq \|\ma\|_{(k)}$. Using the variational formula~(\ref{compact_Discnorm}) of discrepancy seminorms and noting that $\vdelta(\mr^* \ma \mr)=\vdelta(\ma)$ for any unitary matrix $\mr$, we see that the RHS of~\eqref{eq:equiv} equals
\begin{equation}
    \max_{\mq,\mr \in \mathbf{U}(n)} \max_{\substack{\mU^*\mU = \mi_k \\ \mv^*\mv = \mi_k \\ \trace(\mU^*\mv)=0}} \R \trace\big(\ma (\mq \mv) (\mr\mU)^*\big).
\end{equation}
Also, by the variational formula for Ky-Fan norms, the LHS of~\eqref{eq:equiv} is
\begin{equation}
\label{eq:5}
\max_{\substack{\mm^*\mm = \mi_k \\ \mn^*\mn = \mi_k}} \R \trace(\ma \mn \mm^*).
\end{equation}
Assume that the optimum of~\eqref{eq:5} occurs at $\hat{\mm}$ and $\hat{\mn}$. By Lemma~\ref{lem:ortheq}, for isometries $\mU$ and $\mv$ there exist $\mq,\mr \in \mathbf{U}(n)$ such that $\hat{\mn}=\mq \mv$ and $\hat{\mm}=\mr\mU$. Hence, $\max_{\mq \in \mathbf{U}(n)} \sum_{i=1}^k \delta_i(\ma\mq) \geq \|\ma\|_{(k)}$, which concludes the proof.
\end{proof}

Proposition~\ref{prop:maxrep} provides a maximal representation for discrepancy seminorms. Now we want to provide a minimal representation. This dual representation reveals another aspect of the connection between discrepancy seminorms and Ky-Fan norms. Before mentioning the relationship, let us state two results.
\begin{lemma}
  \label{lem:convhull}
The set $\mathcal{P}_{k}(n)$ is the set of extreme points of the compact convex set $ \mathrm{Conv}(\mathcal{P}_{k}(n)):=\{\mx \in \C^{n\times n} : \|\mx\|_{(1)} \leq 1, \|\mx\|_{(n)} \leq k\}$. Similarly, the set $\mathcal{P}^0_{k}(n)$ is the set of extreme points of the compact convex set $\mathrm{Conv}(\mathcal{P}^0_{k}(n)) :=\{\mx \in \C^{n\times n} : \|\mx\|_{(1)} \leq 1, \|\mx\|_{(n)} \leq k, \trace(\mx)=0\}$.
\end{lemma}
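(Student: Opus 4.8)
The plan is to characterize the extreme points of each body through its \emph{feasible directions}. Writing $C:=\text{Conv}(\mathcal{P}_{k}(n))$ and $C^{0}:=\text{Conv}(\mathcal{P}^{0}_{k}(n))$, both are compact and convex, being intersections of the unit balls of $\|\cdot\|_{(1)}$ and $\tfrac1k\|\cdot\|_{(n)}$ (and, for $C^{0}$, of the subspace $\{\trace=0\}$). For $\mx\in C$ I would set $T(\mx):=\{\mathbf{H}:\mx\pm\varepsilon\mathbf{H}\in C \text{ for all small }\varepsilon>0\}$; by convexity $\mx$ is extreme in $C$ exactly when $T(\mx)=\{0\}$, and extreme in $C^{0}$ exactly when $T(\mx)\cap\{\trace=0\}=\{0\}$. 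Since $\|\cdot\|_{(1)}$ and $\|\cdot\|_{(n)}$ are convex, $\mathbf{H}\in T(\mx)$ forces every active constraint to be locally constant along $\mathbf{H}$, a condition I would read off in a singular value decomposition $\mx=\mq\,\mathbf{\Sigma}\,\mr^{*}$ after partitioning indices into the blocks $\{\sigma_i=1\}$, $S:=\{\sigma_i\in(0,1)\}$, and $\{\sigma_i=0\}$.

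The block computation is the core. If $\|\mx\|_{(1)}=1$, expanding $(\mi+\varepsilon\mathbf{H}_{11})^{*}(\mi+\varepsilon\mathbf{H}_{11})\preceq\mi$ for both signs and adding gives $\mathbf{H}_{11}^{*}\mathbf{H}_{11}\preceq 0$, so the unit block $\mathbf{H}_{11}$ vanishes; and a $2\times2$ computation shows every coupling of the unit block to the other blocks pushes the top singular value above $1$, so all rows and columns meeting the unit block die. If $\|\mx\|_{(n)}=k$, any direction lifting a zero singular value raises $\|\cdot\|_{(n)}$ (linearly in $|\varepsilon|$ for the zero block, quadratically for its couplings), so the zero block and its couplings die as well. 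What survives is the restriction $\mathbf{H}_{SS}$, constrained so that $\|\mathbf{\Sigma}_{S}+\varepsilon\mathbf{H}_{SS}\|_{(n)}$ is constant; since $\mathbf{\Sigma}_{S}\succ0$ stays positive definite under small Hermitian perturbations, for which $\|\cdot\|_{(n)}$ equals the (constant) trace, $T(\mx)$ contains all traceless Hermitian perturbations of the $S$-block, a real space of dimension $|S|^{2}-1$.

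This settles $C$. If $\mx\in\mathcal{P}_{k}(n)$ then $\vsigma(\mx)=\mathbf{1}_{k}$, so $S=\varnothing$ and every block is killed, giving $T(\mx)=\{0\}$: each rank-$k$ partial isometry is extreme. Conversely, if $\mx\in C$ is not a partial isometry then either $\|\mx\|_{(n)}<k$, leaving the whole non-unit block free, or $\|\mx\|_{(n)}=k$ with $\sum_{i\in S}\sigma_i=k-\#\{\sigma_i=1\}$ a positive integer, which excludes $|S|=1$ and forces $|S|\ge2$; either way $T(\mx)\neq\{0\}$, so $\mx$ is not extreme. Hence $\mathrm{ext}(C)=\mathcal{P}_{k}(n)$. (The second half also follows cleanly from~\eqref{compact_Kynorm}: it exhibits $\|\cdot\|_{(k)}$ as the support function of $\mathcal{P}_{k}(n)$, while $C$ is the unit ball of the dual of $\|\cdot\|_{(k)}$ and therefore has the same support function, so $C=\overline{\mathrm{conv}}\,\mathcal{P}_{k}(n)$ and Milman's theorem gives $\mathrm{ext}(C)\subseteq\mathcal{P}_{k}(n)$.)

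For $C^{0}$ one inclusion is immediate: a traceless rank-$k$ partial isometry is already extreme in the larger body $C$, hence extreme in $C^{0}$. The converse is where I expect the main obstacle, since slicing by $\{\trace=0\}$ could in principle create new extreme points; ruling this out is exactly a dimension count. Whenever $\mx\in C^{0}$ is not a partial isometry the block computation gives $\dim_{\mathbb{R}}T(\mx)\ge 3$—either $|S|\ge2$ and $|S|^{2}-1\ge3$, or $\|\mx\|_{(n)}<k$ which (as $k<n$) leaves a free non-unit block of size at least two, so $\dim_{\mathbb{R}}T(\mx)\ge 8$. As $\{\trace=0\}$ has real codimension $2$, the intersection $T(\mx)\cap\{\trace=0\}$ is nonzero, so such $\mx$ is not extreme and $\mathrm{ext}(C^{0})=\mathcal{P}^{0}_{k}(n)$. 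The delicate point is the borderline $k=n$: there $\|\cdot\|_{(n)}\le k$ is vacuous, the count can drop to $\dim_{\mathbb{R}}T(\mx)=2$, and the slice of the operator-norm ball genuinely acquires extra extreme points, so the statement for $C^{0}$ must be read with $1\le k<n$—precisely the range in which the discrepancy values are used.
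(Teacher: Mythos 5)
Your argument is correct for $1\le k<n$, and it is worth noting at the outset that the paper states this lemma \emph{without any proof} (the second half is even prefaced by ``one can observe''), so your tangent-cone computation is filling a genuine gap rather than paralleling an existing argument. The structural points all check out: $T(\mx)$ is symmetric and convex, so extremality in $\text{Conv}(\mathcal{P}_k(n))$ is equivalent to $T(\mx)=\{0\}$ and, for the sliced body, to $T(\mx)\cap\{\trace=0\}=\{0\}$; the unit block and its couplings die exactly as you say (a contraction whose compression contains a unit entry must have zero elsewhere in that row and column); when the trace-norm constraint is active, the zero block is killed at first order in $|\varepsilon|$ and its couplings at second order; and the surviving traceless Hermitian perturbations of the $S$-block form an honest \emph{linear} subspace of $T(\mx)$ of real dimension $|S|^2-1$, which is what the codimension-two count against $\{\trace=0\}$ needs (the count is unaffected by the fact that $\trace(\mathbf{H})$ is evaluated in ambient rather than SVD block coordinates, since it is an $\mathbb{R}$-linear functional of real codimension at most $2$). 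Your integrality observations are also right: $\sum_{i\in S}\sigma_i = k-\#\{\sigma_i=1\}$ forces $|S|\ge 2$ when the trace norm is tight, and when $\|\mx\|_{(n)}<k\le n-1$ the bound $n-m\le\|\mx\|_{(n)}<n-1$ (with $m$ the number of non-unit singular values) forces $m\ge 2$, hence $\dim_{\mathbb{R}}T(\mx)\ge 8$. The parenthetical route through~\ref{compact_Kynorm}, dual balls, and Milman's theorem is a clean shortcut for one inclusion of the first claim.

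Your caveat at $k=n$ is not mere caution: it identifies an actual error in the paper's unqualified statement, and your claimed extra extreme points do exist once $n\ge 3$. Concretely, take $n=k=3$, let $\mq$ be a unitary matrix with diagonal $(0.5,\,-0.55,\,0.1)$ (such $\mq$ exists by the Horn--Thompson theorem on diagonals of unitary matrices, since $\sum_i(1-|d_i|)=1.85\ge 2(1-\min_i|d_i|)=1.8$), and set $\mx=\mq\,\Diag(1,1,\tfrac12)$. Then $\trace(\mx)=0.5-0.55+0.05=0$, $\|\mx\|_{(1)}=1$, and $\|\mx\|_{(3)}=2.5\le 3$, so $\mx\in\text{Conv}(\mathcal{P}^0_3(3))$; your own block computation gives $T(\mx)=\{c\,(\mq\mathbf{e}_3)\mathbf{e}_3^{*}:c\in\mathbb{C}\}$, whose trace is $0.1c$, so $T(\mx)\cap\{\trace=0\}=\{0\}$ and $\mx$ is extreme without being a partial isometry. (For $n=k=2$ the statement survives, because the two diagonal entries of a $2\times 2$ unitary have equal modulus, which forces the exceptional direction to be traceless; the failure begins at $n\ge 3$.) This has a downstream consequence you may wish to flag: since this $\mx$ is an extreme point outside $\mathcal{P}^0_n(n)$, the body $\text{Conv}(\mathcal{P}^0_n(n))$ is strictly larger than the convex hull of $\mathcal{P}^0_n(n)$, so by separation there exists $\ma$ with $\|\ma\|^{\delta}_{(n)}<\min_{\alpha\in\mathbb{C}}\|\ma-\alpha\mi_n\|_{(n)}$; hence the identity~\ref{cmp_Discnorm} and Theorem~\ref{lemma_min}, whose proofs invoke this lemma, are only justified (and in the case $k=n$, only true) for $1\le k<n$, and the $k=n$ case requires a separate treatment or an explicit restriction.
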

Lemma~\ref{lem:convhull}, together with representations~\eqref{compact_Kynorm} and~\eqref{compact_Discnorm} implies that
\begin{align}
    \|\ma\|_{(k)} & = \max_{\substack{ \mm \in \text{Conv}(\mathcal{P}_{k}(n))}} \R \trace(\ma \mm) \label{cmp_Kynorm} \\
    \|\ma\|_{(k)}^{\delta} & = \max_{\substack{ \mm \in \text{Conv}(\mathcal{P}^0_{k}(n))}} \R \trace(\ma \mm), \label{cmp_Discnorm}
\end{align}
since the cost is linear and we replaced the sets $\mathcal{P}_{k}(n)$, $\mathcal{P}^0_{k}(n)$ with their convex hulls.

\begin{theorem}[Sion's minimax theorem]
Let $X$ be a compact convex subset of a linear topological space and $Y$ a convex subset of a linear topological space. If $f$ is a real-valued function on $X \times Y$ such that $f(\vx, \cdot)$ is upper semicontinuous and quasi-concave on $Y$ for any $\vx \in X$ and $f(\cdot, \vy)$ is lower semicontinuous and quasi-convex on $X$ for any $\vy \in Y$, then we have
\begin{equation}
    \min_{\vx \in X} \sup_{\vy \in Y} f(\vx,\vy) = \sup_{\vy \in Y} \min_{\vx \in X} f(\vx,\vy).
\end{equation}
\end{theorem}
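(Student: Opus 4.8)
The plan is to prove the two inequalities separately, since only one of them carries any content. First I would dispose of the easy direction $\sup_{\vy}\min_{\vx} f \le \min_{\vx}\sup_{\vy} f$, which needs no hypotheses beyond $X$ being nonempty: for every fixed $\vx_0,\vy_0$ we have $\min_{\vx} f(\vx,\vy_0) \le f(\vx_0,\vy_0) \le \sup_{\vy} f(\vx_0,\vy)$, and taking $\sup_{\vy_0}$ on the left and then $\min_{\vx_0}$ on the right yields the claim. The whole problem is therefore the reverse inequality $\min_{\vx}\sup_{\vy} f \le \sup_{\vy}\min_{\vx} f$, which I would attack by contradiction: assuming $\sup_{\vy}\min_{\vx} f < \min_{\vx}\sup_{\vy} f$, fix a real $c$ strictly between the two values.

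Next I would reduce to a finite subset of $Y$ by exploiting compactness of $X$. For each $\vy$ set $A_{\vy} := \{\vx \in X : f(\vx,\vy) \le c\}$; quasi-convexity of $f(\cdot,\vy)$ makes $A_{\vy}$ convex, while lower semicontinuity together with compactness of $X$ makes it compact. The inequality $\min_{\vx}\sup_{\vy} f > c$ says precisely that no single $\vx$ has $f(\vx,\vy)\le c$ for every $\vy$, i.e. $\bigcap_{\vy} A_{\vy} = \emptyset$; by the finite intersection property there are $\vy_1,\dots,\vy_m$ with $\bigcap_{j} A_{\vy_j} = \emptyset$, equivalently $\min_{\vx}\max_j f(\vx,\vy_j) > c$. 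It then suffices to establish the \emph{finite} minimax inequality
\[
  \min_{\vx}\max_{1\le j\le m} f(\vx,\vy_j) \;\le\; \sup_{\vy \in \mathrm{conv}\{\vy_1,\dots,\vy_m\}} \min_{\vx} f(\vx,\vy),
\]
because its right-hand side is $\le \sup_{\vy}\min_{\vx} f < c$, while we have just arranged its left-hand side to exceed $c$ — the desired contradiction.

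I would prove this finite inequality by induction on $m$, the base case $m=1$ being trivial. For the inductive step I would peel off the last point $\vy_m$ and pass to its compact convex sublevel set $K := \{\vx : f(\vx,\vy_m)\le c\}$ (if $K=\emptyset$ then $\vy_m$ itself works and we are done). On $K$ the remaining points still satisfy $\min_{\vx\in K}\max_{j<m} f(\vx,\vy_j) > c$, so the inductive hypothesis applied with $K$ in place of $X$ merges them into a single $\vy' \in \mathrm{conv}\{\vy_1,\dots,\vy_{m-1}\}$ with $\min_{\vx\in K} f(\vx,\vy') > c$; checking the two cases $\vx\in K$ and $\vx\notin K$ then gives $\min_{\vx}\max\{f(\vx,\vy'),f(\vx,\vy_m)\} > c$ on all of $X$. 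This reduces everything to a two-point lemma, the crux of the whole argument: if $u,v \in Y$ satisfy $\min_{\vx}\max\{f(\vx,u),f(\vx,v)\} > c$, then some $w$ on $[u,v]$ satisfies $\min_{\vx} f(\vx,w) > c$. To prove it I would consider the disjoint compact convex sublevel sets $C_1 := \{\vx: f(\vx,u)\le c\}$ and $C_2 := \{\vx: f(\vx,v)\le c\}$; for $w_t := (1-t)u+tv$, quasi-concavity of $f(\vx,\cdot)$ gives $f(\vx,w_t) \ge \min\{f(\vx,u),f(\vx,v)\}$, so the sublevel set $B_t := \{\vx: f(\vx,w_t)\le c\}$ lies in $C_1 \cup C_2$, and being convex it is connected, hence entirely inside $C_1$ or entirely inside $C_2$ whenever it is nonempty. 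Since $B_0 = C_1$ and $B_1 = C_2$, tracking this assignment along $[0,1]$ should force some $t^\ast$ with $B_{t^\ast}=\emptyset$, i.e. $\min_{\vx} f(\vx,w_{t^\ast}) > c$.

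The hard part will be making this last connectedness argument rigorous, because the semicontinuity hypotheses are asymmetric — $f$ is lower semicontinuous in $\vx$ but only upper semicontinuous in $\vy$ — so the set-valued map $t\mapsto B_t$ is not obviously well-behaved. The delicate point is to show that $\{t : B_t \subseteq C_1\}$ and $\{t : B_t \subseteq C_2\}$ are relatively closed in $\{t : B_t\ne\emptyset\}$; being disjoint, covering it, and containing $0$ and $1$ respectively, connectedness of $[0,1]$ then forces $\{t : B_t=\emptyset\}$ to be nonempty. I expect to need a careful limiting argument here, passing to a sequence $t_n\to t^\ast$ and using upper semicontinuity of $f(\vx,\cdot)$ together with compactness of $X$ to control the limit of $B_{t_n}$, perhaps working first with strict sublevel sets and then removing the strictness by perturbing $c$. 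Once the two-point lemma is secured, folding it into the induction and assembling the reductions above is routine.
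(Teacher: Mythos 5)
First, a point of comparison: the paper does not prove this statement at all --- Sion's minimax theorem is quoted as a known classical tool (it is used only to justify one equality in the proof of the minimal representation $\|\ma\|^{\delta}_{(k)} = \min_{\alpha}\|\ma-\alpha\mi_n\|_{(k)}$), so there is no internal proof to compare against. Your attempt is, in architecture, exactly the standard elementary proof of Sion's theorem due to Komiya (1988, building on J\'{o}o and Kindler): the trivial inequality, the contradiction level $c$, compactness plus the finite intersection property to reduce to finitely many $y_j$ (note the sublevel sets $A_y$ are closed because lower semicontinuity in $x$ makes them closed, hence compact in $X$), induction peeling off $y_m$ via the set $K$, and a two-point lemma settled by connectedness of convex sublevel sets. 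All of that bookkeeping is correct as you set it up ($K$ compact convex, the hypotheses restrict to $K$, the empty-$K$ case handled, and $\mathrm{conv}\{y_1,\dots,y_m\}\subseteq Y$ by convexity of $Y$).

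The step you flagged is indeed the crux, and as written it is a gap: with the single level $c$, the set $\{t : B_t \subseteq C_1\}$ is \emph{not} obviously relatively closed --- from $x_n \in B_{t_n}$, $x_n \to x$ you cannot conclude $f(x, w_{t^*}) \le c$, since lower semicontinuity is in $x$ for fixed $y$ while here $y$ moves, and upper semicontinuity in $y$ points the wrong way for sublevel membership. Your proposed repair (perturbing $c$) is the right one; here is how it closes, and note it does not need the compactness-of-$X$ limiting argument you anticipated. Pick $\beta$ with $c < \beta < \min_{x}\max\{f(x,u), f(x,v)\}$ and run the dichotomy on the \emph{level-$\beta$} sets $B_t^{\beta}$, which lie in the disjoint union $C_1^{\beta} \cup C_2^{\beta}$ and are connected; the \emph{level-$c$} sets $B_t$ (nonempty for every $t$ under the contradiction hypothesis) merely supply witnesses. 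To see $\{t : B_t^{\beta} \subseteq C_1^{\beta}\}$ is closed, take $t_n \to t^*$ in it and a witness $x \in B_{t^*}$, so $f(x, w_{t^*}) \le c < \beta$. Quasi-concavity of $f(x,\cdot)$ makes $s \mapsto f(x, w_s)$ quasi-concave on $[0,1]$, so its superlevel set $\{s : f(x,w_s) \ge \beta\}$ is an interval not containing $t^*$. If infinitely many $t_n$ fall outside that interval, then $x \in B_{t_n}^{\beta} \subseteq C_1^{\beta}$, so $x$ itself is a point of $B_{t^*}^{\beta}$ lying in $C_1^{\beta}$, and connectedness puts all of $B_{t^*}^{\beta}$ there. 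If instead $f(x, w_{t_n}) \ge \beta$ eventually, upper semicontinuity of $f(x,\cdot)$ along the segment gives $\beta \le \limsup_n f(x, w_{t_n}) \le f(x, w_{t^*}) \le c$, a contradiction. The two index sets are then closed, disjoint, nonempty (containing $0$ and $1$), and cover $[0,1]$, contradicting connectedness. With this two-level device inserted, your proof is complete and correct; without it, the two-point lemma remains a sketch whose literal single-level version fails.
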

The announced minimal representation is noted in Theorem~\ref{lemma_min}.
\begin{theorem}
\label{lemma_min}
Let $\ma \in \C^{n\times n}$. For $1 \le k \le n$, we have the variational formula
\begin{equation}
\label{eq:minchar}
    \|\ma\|^{\delta}_{(k)} = \min_{\alpha \in \mathbb{C}} \|\ma-\alpha \mi_n\|_{(k)}.
\end{equation}
\end{theorem}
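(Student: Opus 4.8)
The plan is to prove the identity by convex duality, combining the compact representations~\eqref{cmp_Kynorm} and~\eqref{cmp_Discnorm} with Sion's minimax theorem stated just above. The guiding observation is that the scalar $\alpha$ should be read as a Lagrange multiplier for the single linear constraint ($\trace(\mm)=0$) that distinguishes the feasible set $\text{Conv}(\mathcal{P}^0_{k}(n))$ of the discrepancy norm from the feasible set $\text{Conv}(\mathcal{P}_{k}(n))$ of the Ky-Fan norm.

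First I would set up a bilinear payoff. Define $f(\mm,\alpha) := \R\big(\alpha\,\trace(\mm)\big) - \R\trace(\ma\mm)$ on $X\times Y$, where $X := \text{Conv}(\mathcal{P}_{k}(n))$ and $Y := \mathbb{C}$. By the lemma preceding~\eqref{cmp_Kynorm}, $X$ is compact and convex, while $Y$ is trivially convex. For fixed $\alpha$ the map $\mm\mapsto f(\mm,\alpha)$ is affine, hence continuous and quasi-convex; for fixed $\mm$ the map $\alpha\mapsto f(\mm,\alpha)$ is affine, hence continuous and quasi-concave. Thus all hypotheses of Sion's theorem are met, with the \emph{compact} variable $\mm$ playing the role of the minimization variable and the unbounded variable $\alpha$ playing the role of the supremum variable, so that $\min_{\mm\in X}\sup_{\alpha\in\mathbb{C}} f(\mm,\alpha) = \sup_{\alpha\in\mathbb{C}}\min_{\mm\in X} f(\mm,\alpha)$.

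Next I would evaluate the two sides. On the left, the inner supremum $\sup_{\alpha\in\mathbb{C}}\R\big(\alpha\,\trace(\mm)\big)$ equals $0$ when $\trace(\mm)=0$ and $+\infty$ otherwise (if $\trace(\mm)\neq 0$, pick $\alpha$ in the phase that makes $\alpha\,\trace(\mm)$ real and positive, then let its modulus grow). Hence the outer minimum is forced onto the slice $\{\mm\in X:\trace(\mm)=0\}=\text{Conv}(\mathcal{P}^0_{k}(n))$, on which it equals $-\max_{\mm\in\text{Conv}(\mathcal{P}^0_{k}(n))}\R\trace(\ma\mm)=-\|\ma\|^{\delta}_{(k)}$ by~\eqref{cmp_Discnorm} (here one uses that $\mathcal{P}^0_{k}(n)$ is nonempty, so the value is finite). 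On the right, for each fixed $\alpha$ one has $\min_{\mm\in X} f(\mm,\alpha)=-\max_{\mm\in X}\R\trace\big((\ma-\alpha\mi_n)\mm\big)=-\|\ma-\alpha\mi_n\|_{(k)}$ by~\eqref{cmp_Kynorm}, so the outer supremum equals $-\min_{\alpha\in\mathbb{C}}\|\ma-\alpha\mi_n\|_{(k)}$. Equating the two sides and negating yields exactly~\eqref{eq:minchar}.

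The easy half of the statement drops out directly and serves as a sanity check: for any traceless $\mm\in\text{Conv}(\mathcal{P}^0_{k}(n))$ and any $\alpha$, $\R\trace(\ma\mm)=\R\trace\big((\ma-\alpha\mi_n)\mm\big)\le\|\ma-\alpha\mi_n\|_{(k)}$, giving $\|\ma\|^{\delta}_{(k)}\le\min_{\alpha}\|\ma-\alpha\mi_n\|_{(k)}$. The substantive content, and the step I expect to be the main obstacle, is the reverse inequality, i.e.\ the legitimacy of the minimax exchange. The delicate point is that the minimization variable $\alpha$ ranges over the \emph{non-compact} set $\mathbb{C}$; the clean resolution is precisely the sign flip engineered above, which arranges for the compact set $\text{Conv}(\mathcal{P}_{k}(n))$ to be the variable over which Sion demands compactness, while $\alpha$ enters only under a supremum, for which mere convexity suffices. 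The only thing to verify carefully is that the interchange remains valid even though the inner supremum is $+\infty$ off the constraint set; this is harmless because the outer minimum then selects a point of the nonempty feasible slice, so the common optimal value stays finite and the theorem applies verbatim.
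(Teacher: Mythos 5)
Your proof is correct and essentially identical to the paper's own argument: the paper likewise starts from the compact representations~\eqref{cmp_Kynorm} and~\eqref{cmp_Discnorm}, encodes the constraint $\trace(\mm)=0$ via an inner optimization over the multiplier $\alpha$, and exchanges the two optimizations by Sion's minimax theorem. The only difference is cosmetic (and if anything slightly more careful): you negate the payoff so that the compact set $\text{Conv}(\mathcal{P}_{k}(n))$ occupies the minimization slot of Sion's theorem exactly as stated, whereas the paper applies the equivalent negated form directly with the max over the compact set and the min over $\mathbb{C}$.
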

\begin{proof}
Using representation~\eqref{cmp_Discnorm} for the discrepancy seminorm, we have
\begin{equation}
    \begin{split}
        \|\ma\|^{\delta}_{(k)} & = \max_{\mm \in \textup{Conv}(\mathcal{P}^0_{k}(n))} \R \trace(\ma \mm ) \\
        & = \max_{\substack{\mm \in \textup{Conv}(\mathcal{P}_{k}(n))}} \min_{\alpha \in \mathbb{C}} \; \R \trace(\ma \mm ) - \R \alpha \trace(\mm) \\
        & = \min_{\alpha \in \mathbb{C}} \max_{\mm \in \textup{Conv}(\mathcal{P}_{k}(n))} \; \R \trace((\ma-\alpha \mi_n) \mm ) \\
        & = \min_{\alpha \in \mathbb{C}}\|\ma - \alpha \mi_n\|_{(k)},
    \end{split}
\end{equation}
where the second equality follows since $\textup{Conv}(\mathcal{P}^0_{k}(n))$ is the intersection of the traceless matrices with $\textup{Conv}(\mathcal{P}_{k}(n))$. The third equality follows by Sion's minimax theorem, while the last equality follows from the maximal representation~\eqref{cmp_Kynorm}.
\end{proof}

In words, Theorem~\ref{lemma_min} shows that discrepancy values can be thought of as a vector-valued extension of the (spectral norm) distance of a linear operator to a scalar multiple of the identity; i.e., a vector-extension of the following observation
\begin{equation}
   \delta_1(\ma) = \min_{\alpha \in \mathbb{C}}\;\sigma_1(\ma-\alpha \mi_n) = \max_{\substack{\|\vx\|=\|\vy\|=1 \\ \vx \bot \vy}}|\langle \ma\vx,\vy \rangle|.
\end{equation}
The problem of projecting a linear operator onto the subspace of scalar linear operator has been investigated in the literature (see~\cite{Apostol84,bhatia1999orthogonality}). Finally, the two relations between discrepancy seminorms and Ky-Fan norms imply the following identity.

\begin{corollary}
Using Lemma~\ref{max_delta} and Theorem~\ref{lemma_min}, we have the equality
\[
    \max_{\mU \in \mathbf{U}(n)}\min_{\alpha \in \mathbb{C}} \|\ma-\alpha\mU \|_{(k)} = \|\ma\|_{(k)}.
\]
\end{corollary}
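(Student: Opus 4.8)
The plan is to chain the two cited results and then strip the unitary factor off the scalar term using unitary invariance of the Ky-Fan norm. First I would start from the right-hand side $\|\ma\|_{(k)}$ and invoke Theorem~\ref{max_delta} to rewrite it as a maximum over the unitary group of discrepancy norms, namely $\|\ma\|_{(k)} = \max_{\mq \in \mathbf{U}(n)} \|\ma\mq\|^{\delta}_{(k)}$. Next I would apply Theorem~\ref{lemma_min} to each matrix $\ma\mq$, replacing the discrepancy norm by its minimal representation $\|\ma\mq\|^{\delta}_{(k)} = \min_{\alpha \in \mathbb{C}} \|\ma\mq - \alpha\mi_n\|_{(k)}$, which yields
\[
\|\ma\|_{(k)} = \max_{\mq \in \mathbf{U}(n)} \min_{\alpha \in \mathbb{C}} \|\ma\mq - \alpha\mi_n\|_{(k)}.
\]

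The key step is then to clear the factor $\mq$ out of the first term. Since $\mq$ is unitary and the Ky-Fan norm is unitarily invariant, right-multiplying the argument by $\mq^*$ leaves the norm unchanged, so $\|\ma\mq - \alpha\mi_n\|_{(k)} = \|(\ma\mq - \alpha\mi_n)\mq^*\|_{(k)} = \|\ma - \alpha\mq^*\|_{(k)}$, using $\mq\mq^* = \mi_n$. Substituting this back gives $\|\ma\|_{(k)} = \max_{\mq \in \mathbf{U}(n)} \min_{\alpha \in \mathbb{C}} \|\ma - \alpha\mq^*\|_{(k)}$.

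Finally I would re-index by setting $\mU = \mq^*$. Because the conjugate-transpose map is a bijection of $\mathbf{U}(n)$ onto itself, the maximum over $\mq$ is identical to the maximum over $\mU$, producing exactly $\max_{\mU \in \mathbf{U}(n)} \min_{\alpha \in \mathbb{C}} \|\ma - \alpha\mU\|_{(k)} = \|\ma\|_{(k)}$, as claimed. I do not expect any genuine obstacle here: the only point requiring a moment's care is that the substitution $\mU = \mq^*$ is a true bijection of the unitary group, so no values of the outer optimization are lost or gained; everything else is a direct concatenation of the two cited results with the unitary invariance of $\|\cdot\|_{(k)}$.
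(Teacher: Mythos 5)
Your proposal is correct and follows exactly the route the paper intends: the corollary is stated with no explicit proof beyond citing Theorem~\ref{max_delta} and Theorem~\ref{lemma_min}, and your chain---rewriting $\|\ma\|_{(k)}$ as $\max_{\mq}\|\ma\mq\|^{\delta}_{(k)}$, applying the minimal representation to $\ma\mq$, then using unitary invariance to pass from $\|\ma\mq-\alpha\mi_n\|_{(k)}$ to $\|\ma-\alpha\mq^*\|_{(k)}$ and re-indexing $\mU=\mq^*$---is precisely the intended concatenation of those two results. No gaps; the bijectivity observation for $\mU=\mq^*$ is the only delicate point and you handled it.
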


\subsection{Discrepancy values for Hermitian matrices}
\label{sec:discHerm}
Now we can illustrate the connection between discrepancy values and the spectral spread of Hermitian matrices. When $\ma$ is Hermitian, we can solve the optimization problem~\eqref{eq:minchar} in closed form and get $\delta_{2k-1}(\ma)=\delta_{2k}(\ma)=|\lambda_k(\ma)-\lambda_{n-k+1}(\ma)|/2$
for $k=1,\dots,\floor*{n/2}$. Moreover, if $n$ is odd, then $\delta_n(\ma)$ would be zero. On the other hand, in Def.~\ref{Def:radii}, for a Hermitian matrix with eigenvectors $\vv_1,\vv_2,\dots, \vv_n$ corresponding to the non-increasing eigenvalues, we can check that the vectors $\vx_{2k-1}=(-\vv_{k}+\vv_{n-k+1})/\sqrt{2}$ and $\vy_{2k-1}=(-\vv_{k}-\vv_{n-k+1})/\sqrt{2}$ for the odd terms, and $\vx_{2k}=\vy_{2k-1}$ and $\vy_{2k}=\vx_{2k-1}$ for the even terms are  maximizers in problem~\ref{Def:radii}. More compactly, for a Hermitian matrix $\ma$ we have the (vector) equality
\begin{equation}
    \vdelta(\ma) = \frac{|\vlambda^{\downarrow}(\ma)-\vlambda^{\uparrow}(\ma)|^{\downarrow}}{2}.
\end{equation}
\begin{remark}
  The discrepancy value of Hermitian matrices is invariant w.r.t.\ a particular transformation. We call this invariance \emph{looseness} or \emph{slackness}. It states that if we fix the outer eigenvalues, we can shift the inner eigenvalues, and the discrepancy values remain the same so long as we do not cross the fixed eigenvalues. We are going to exploit this property later in this paper.
\end{remark}

The interlacing property for Hermitian matrices implies the stability of discrepancy values under perturbation.
\begin{proposition}[Interlacing theorem]
Let $\ma$ be a Hermitian matrix of order $n$, and $\mb$ be a principal submatrix of $\ma$. Using the Cauchy interlacing theorem we obtain $\delta_1(\ma)=\delta_2(\ma)\geq \delta_1(\mb)=\delta_2(\mb)\geq \delta_3(\ma)=\delta_4(\ma)\geq \delta_3(\mb)=\delta_4(\mb)\geq \dots$.
\end{proposition}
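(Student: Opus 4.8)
The plan is to reduce the whole chain to the closed-form expression for the discrepancy values of a Hermitian matrix that we established just above, namely $\delta_{2k-1}(\ma) = \delta_{2k}(\ma) = \tfrac{1}{2}\big(\lambda_k(\ma) - \lambda_{n-k+1}(\ma)\big)$, together with the classical Cauchy interlacing inequalities. The equalities $\delta_{2k-1} = \delta_{2k}$ asserted throughout the chain (both for $\ma$ and for $\mb$) are immediate consequences of this formula, so the content of the statement is entirely in the decreasing chain relating the odd-indexed discrepancy values of $\ma$ and $\mb$.

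First I would fix the setup by taking $\mb$ to be a principal submatrix of $\ma$ of order $n-1$ and writing $\mu_j := \lambda_j(\mb)$. The Cauchy interlacing theorem then gives $\lambda_j(\ma) \geq \mu_j \geq \lambda_{j+1}(\ma)$ for $j = 1, \dots, n-1$. Applying the closed form to each matrix yields $\delta_{2k-1}(\ma) = \tfrac{1}{2}\big(\lambda_k(\ma) - \lambda_{n-k+1}(\ma)\big)$ and $\delta_{2k-1}(\mb) = \tfrac{1}{2}\big(\mu_k - \mu_{n-k}\big)$, so each link in the chain becomes a comparison between two eigenvalue gaps.

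Next I would verify the two alternating kinds of links separately. For a link of the form $\delta_{2k-1}(\ma) \geq \delta_{2k-1}(\mb)$, i.e. $\lambda_k(\ma) - \lambda_{n-k+1}(\ma) \geq \mu_k - \mu_{n-k}$, I would use $\lambda_k(\ma) \geq \mu_k$ at the top end and $\mu_{n-k} \geq \lambda_{n-k+1}(\ma)$ at the bottom end; subtracting gives the claim. For a link of the form $\delta_{2k-1}(\mb) \geq \delta_{2k+1}(\ma)$, i.e. $\mu_k - \mu_{n-k} \geq \lambda_{k+1}(\ma) - \lambda_{n-k}(\ma)$, I would instead use $\mu_k \geq \lambda_{k+1}(\ma)$ and $\lambda_{n-k}(\ma) \geq \mu_{n-k}$, which are the opposite halves of the interlacing inequalities. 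In every case each of the two bounds needed is a single instance of Cauchy interlacing, applied once at the top index and once at the bottom index.

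The work is essentially index bookkeeping rather than a genuine obstacle: the only care required is to track the correct interlacing direction at each end (the top eigenvalue of the larger quantity must dominate, while its bottom eigenvalue must be dominated) and to handle the endpoints when $n$ is odd, where the innermost discrepancy value of $\ma$ vanishes and the chain terminates. For a principal submatrix of order $n-r$ with $r > 1$, the same argument goes through after replacing the single-step bounds $\lambda_j(\ma) \geq \mu_j \geq \lambda_{j+1}(\ma)$ by the general Cauchy bounds $\lambda_j(\ma) \geq \mu_j \geq \lambda_{j+r}(\ma)$, or simply by iterating the order $n-1$ case.
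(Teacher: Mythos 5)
Your core argument --- the case where $\mb$ has order $n-1$ --- is correct and is exactly what the paper intends: the paper offers no proof beyond the pointer you spell out, namely combining the closed form $\delta_{2k-1}=\delta_{2k}=\tfrac{1}{2}(\lambda_k-\lambda_{n-k+1})$ with the one-step Cauchy inequalities $\lambda_j(\ma)\ge\mu_j\ge\lambda_{j+1}(\ma)$, and your index bookkeeping for the two kinds of links is right.

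However, your closing claim for a principal submatrix of order $n-r$ with $r>1$ is false, and neither of your two proposed repairs works. With the general bounds $\lambda_j(\ma)\ge\mu_j\ge\lambda_{j+r}(\ma)$, the link $\delta_{2k-1}(\mb)\ge\delta_{2k+1}(\ma)$ would require $\mu_k\ge\lambda_{k+1}(\ma)$, which Cauchy interlacing only supplies when $r\le 1$; and iterating the order-$(n-1)$ case through an intermediate submatrix $\mb_1$ only yields $\delta_{2k-1}(\mb)\ge\delta_{2k+1}(\mb_1)$, which sits on the wrong side of $\delta_{2k+1}(\ma)$. A concrete counterexample: take $\ma=\Diag(10,\,9.9,\,1,\,0)$ and the principal submatrix $\mb=\Diag(10,\,9.9)$ (rows and columns $1,2$). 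Then $\delta_1(\mb)=0.05$ while $\delta_3(\ma)=\tfrac{1}{2}(9.9-1)=4.45$, so the chain's link $\delta_1(\mb)\ge\delta_3(\ma)$ fails. The statement that does survive for general $r$ shifts the interleaving by $r$ steps: with $m=n-r$ and $\mu_j=\lambda_j(\mb)$, one has $\delta_{2k-1}(\ma)\ge\delta_{2k-1}(\mb)\ge\delta_{2(k+r)-1}(\ma)$, using $\lambda_k\ge\mu_k$ and $\mu_{m-k+1}\ge\lambda_{n-k+1}$ for the first link, and $\mu_k\ge\lambda_{k+r}$ and $\mu_{m-k+1}\le\lambda_{n-(k+r)+1}$ for the second. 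So you should either restrict your final paragraph to $r=1$ --- evidently the proposition's intended reading, since its chain matches one-step interlacing --- or replace it with the $r$-shifted chain.
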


Finally, we note the following: for an $n \times n$ Hermitian matrix $\ma$, when $n$ is even, the spectral spread of the direct sum $\ma\oplus \ma$ is related to the discrepancy values via
\begin{equation}
    \label{eq:delta_spr}
    \vdelta(\ma) = \text{spr}^+(\ma \oplus \ma)/2,
\end{equation}
while for odd $n$, $\vdelta(\ma)$ has an extra zero at the end of the vector $\text{spr}^+(\ma \oplus \ma)/2$. 

\subsection{Important classes of matrices based on their discrepancy values}
\label{sec:subclasses}
We know that unitary matrices have unit singular values, while partial isometries have zero and one as their singular values, and both matrices play an important role in the characterization of singular values. We believe that to understand discrepancy values better, we need to understand the equivalent classes of matrices associated with them. We explore below the structure of matrices with unit discrepancy values and discuss matrices with or zero and one discrepancy values. Before that, we need some tools.

\begin{proposition}
\label{singeqdisc}
For $\ma \in \C^{n\times n}$, we have $\vdelta(\ma)=\vsigma(\ma) $ iff it has the singular value decomposition $\ma = \mU \Sigma \mv^*$, with $\mU,\mv \in \mathbf{U}(n)$ and $\Diag(\mU^*\mv)=0$; i.e., $\langle \vu_i,\vv_i \rangle=0$ for $i=1,\dots, n$.
\end{proposition}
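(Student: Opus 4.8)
The plan is to first rephrase the claim purely in terms of the cumulative pseudo-norms. Since $\sigma_k(\ma)=\|\ma\|_{(k)}-\|\ma\|_{(k-1)}$ and $\delta_k(\ma)=\|\ma\|^{\delta}_{(k)}-\|\ma\|^{\delta}_{(k-1)}$, and since $\vdelta(\ma)\prec_w\vsigma(\ma)$ already gives $\|\ma\|^{\delta}_{(k)}\le\|\ma\|_{(k)}$, the equality $\vdelta(\ma)=\vsigma(\ma)$ is equivalent to $\|\ma\|^{\delta}_{(k)}=\|\ma\|_{(k)}$ holding for every $k=1,\dots,n$. For the ($\Leftarrow$) direction I would feed the special decomposition into Definition~\ref{Def:radii}: taking $\vx_i=\vv_i$ and $\vy_i=\vu_i$ yields orthonormal systems with $\langle\vx_i,\vy_i\rangle=\langle\vv_i,\vu_i\rangle=0$, so the constraint $\sum_{i=1}^k\langle\vx_i,\vy_i\rangle=0$ is satisfied for every $k$, while $\sum_{i=1}^k|\langle\ma\vv_i,\vu_i\rangle|=\sum_{i=1}^k\sigma_i=\|\ma\|_{(k)}$. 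Combined with the reverse inequality this forces $\|\ma\|^{\delta}_{(k)}=\|\ma\|_{(k)}$ for all $k$, hence $\vdelta=\vsigma$.

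The ($\Rightarrow$) direction is the substantial one. Fix an SVD $\ma=\mU\Sigma\mv^*$ and set $\mC=\mU^*\mv$, so the target is $\diag(\mC)=0$. I would exploit the primal description~\ref{cmp_Kynorm}: the maximizer of $\mm\mapsto\R\trace(\ma\mm)$ over $\mathrm{Conv}(\mathcal{P}_{k}(n))$ is the rank-$k$ partial isometry $\mm_\star^{(k)}=\sum_{i\le k}\vv_i\vu_i^*$, and by von Neumann's trace inequality this maximizer is uniquely determined whenever $\sigma_k(\ma)>\sigma_{k+1}(\ma)$ (one checks $\mm_\star^{(k)}$ depends only on the top-$k$ singular subspaces, not on the chosen vectors). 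Now $\|\ma\|^{\delta}_{(k)}$ is, by~\ref{cmp_Discnorm}, the same maximum restricted to the traceless face $\mathrm{Conv}(\mathcal{P}^0_{k}(n))$, so the hypothesis $\|\ma\|^{\delta}_{(k)}=\|\ma\|_{(k)}$ says the value is still attained on that face; at every index $k$ with a strict singular-value gap the unique maximizer $\mm_\star^{(k)}$ must therefore itself be traceless, giving $\trace(\mm_\star^{(k)})=\sum_{i\le k}\langle\vv_i,\vu_i\rangle=0$. Differencing this identity across two consecutive gap indices shows that $\trace(\mC_b)=0$ on every maximal block $b$ of equal singular values.

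It remains to convert these block-trace conditions into the pointwise condition $\diag(\mC)=0$, and this is the step I expect to be the main obstacle, precisely because of degeneracies. The residual freedom in the SVD is conjugation $\mC\mapsto\mW^*\mC\mW$ by unitaries $\mW$ commuting with $\Sigma$, i.e. block-diagonal $\mW=\bigoplus_b\mW_b$; such a conjugation mixes each diagonal block $\mC_b$ only within itself. Invoking Fillmore's theorem (a complex matrix is unitarily similar to a zero-diagonal matrix iff it is traceless), each traceless block $\mC_b$ with $\sigma_b>0$ can be rotated to have zero diagonal, and assembling the $\mW_b$ produces the desired decomposition. The zero singular-value block needs separate care: there $\Sigma$ vanishes, so the kernel bases of $\ma$ and $\ma^*$ may be rotated independently, and the Sing--Thompson realization theorem (the zero vector is weakly majorized by any singular-value vector) lets us choose them so that $\langle\vu_i,\vv_i\rangle=0$ there as well. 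The delicate points are thus the uniqueness of the maximizer across repeated singular values and the bi-unitary handling of the kernel; once these are controlled, the pointwise orthogonality $\diag(\mU^*\mv)=0$ follows.
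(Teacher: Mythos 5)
Your route is genuinely different from the paper's (which proves the ``only if'' direction by induction on $k$ using uniqueness of singular vectors up to phase, rigorously only for full-rank matrices with distinct singular values, and merely asserts the degenerate cases), and your gap-index/block-trace analysis correctly handles repeated \emph{positive} singular values: at each index $k$ with $\sigma_k(\ma)>\sigma_{k+1}(\ma)$ the maximizer over $\mathcal{P}_k(n)$ is indeed unique, attainment on the traceless face forces its trace to vanish, and differencing plus the classical ``traceless implies unitarily similar to zero diagonal'' theorem kills the diagonal of each positive tie-block of $C:=\mU^*\mv$. The genuine gap is the kernel block when $\dim\ker\ma=1$. There the two-sided freedom you invoke degenerates to multiplication by phases, so $|C_{nn}|=|\langle \vu_n,\vv_n\rangle|$ is an \emph{invariant} of $\ma$ (the overlap of the uniquely determined lines $\ker\ma^*$ and $\ker\ma$), and Sing--Thompson cannot rescue you: for square matrices the theorem has a second condition, $\sum_{i=1}^{m-1}|d_i|^{\downarrow}-|d_m|^{\downarrow}\le \sum_{i=1}^{m-1}s_i-s_m$, which for a $1\times 1$ block forces $|d_1|=s_1$; i.e.\ a zero diagonal entry is achievable iff $C_{nn}$ was already zero. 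Your parenthetical cites only the weak-majorization condition, which is precisely the one that is insufficient here (it does suffice, as you say, when the kernel block has size at least $2$).

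The fix stays inside your framework but requires one more use of the hypothesis at the non-gap index $k=n$: when $\sigma_{n-1}(\ma)>\sigma_n(\ma)=0$, the maximizers of $\R\trace(\ma\mm)$ over unitaries are exactly $\mm=\mv(\mi_{n-1}\oplus e^{i\phi})\mU^*$ (the equality analysis pins $n_{ii}=1$ for $i\le n-1$ regardless of internal ties), and their traces equal $\sum_{i\le n-1}C_{ii}+e^{i\phi}C_{nn}=e^{i\phi}C_{nn}$ by your trace identity at the gap index $n-1$; since $\|\ma\|^{\delta}_{(n)}=\|\ma\|_{(n)}$ and the maximum over the compact nonempty set $\mathcal{P}^0_n(n)$ is attained, some maximizer is traceless, forcing $C_{nn}=0$, after which the phase freedom is moot. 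One smaller correction: your claim that $\trace(C_b)=0$ on \emph{every} maximal tie-block should be restricted to blocks of positive singular values --- the differencing argument produces no gap index at the right end of the zero block, its trace depends on the chosen kernel bases and need not vanish for a given choice, and your two-sided construction does not require it when $\dim\ker\ma\ge 2$. With these repairs your proof is complete and in fact more rigorous in the degenerate cases than the argument given in the paper.
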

\begin{proof}
The ``if'' part follows trivially by the definition of discrepancy values. We first prove the ``only if'' for full-rank matrices with unique singular values; in this case, the singular vectors are unique up to multiplication by $e^{i\theta}$. We then have
\begin{equation*}
    \max_{\substack{\|\vx_{11}\|=\|\vy_{11}\|=1 \\ \vx_{11} \bot \vy_{11}}}|\langle \ma\vx_{11},\vy_{11} \rangle| = \max_{\substack{\|\vu_{11}\|=\|\vv_{11}\|=1}}|\langle \ma\vu_{11},\vv_{11} \rangle|.
\end{equation*}
Therefore, it must be the case that the optimizers $\vu^*_{11}=e^{i\theta_1}\vx^*_{11}$ and $\vv^*_{11}=e^{i\phi_1}\vy^*_{11}$; i.e., $\langle \vu^*_{11},\vv^*_{11} \rangle=0$. Next, we have the equality
$$
    \max_{\substack{\|\vx_{12}\|=\|\vy_{12}\|=1 \\ \|\vx_{22}\|=\|\vy_{22}\|=1 \\ \vx_{12}\bot \vx_{22}\\ \vy_{12} \bot \vy_{22} \\ \langle \vx_{12},\vy_{12} \rangle=-\langle \vx_{22},\vy_{22} \rangle}} \big|\langle \ma\vx_{12},\vy_{12} \rangle + \langle \ma\vx_{22},\vy_{22} \rangle\big| =  \max_{\substack{\|\vu_{12}\|=\|\vv_{12}\|=1 \\ \|\vu_{22}\|=\|\vv_{22}\|=1 \\ \vu_{12} \bot \vu_{22}\\ \vv_{12} \bot \vv_{22}}}  \big|\langle \ma\vu_{12},\vv_{12} \rangle + \langle \ma\vu_{22},\vv_{22} \rangle\big|,
$$
which implies that $\langle \vu^*_{12},\vv^*_{12} \rangle+\langle \vu^*_{22},\vv^*_{22} \rangle=0$. On the other hand, we know that $\vu^*_{12} = e^{i\theta_2} \vu^*_{11}$ and $\vv^*_{12} = e^{i\phi_2} \vv^*_{11}$; thus $\langle \vu^*_{22},\vv^*_{22} \rangle=0$. The general result follows inductively. For matrices with repeated singular values or zero singular values, not all possible singular value decompositions of $\ma$ have the property that $\Diag(\mU^*\mv)=0$, but a decomposition with such property belongs to the set of possible singular value decompositions of that matrix.
\end{proof}
\begin{remark}
In other words, matrices of the form $\ma=\sum_{i=1}^n \delta_i \mx_i+ \alpha \mi_n$ have discrepancy values equal to $\delta_i$, where $\delta_i\geq 0$, and when $\mx_i$ are nilpotent rank-1 isometries and mutally orthogonal; i.e., $\langle \mx_i,\mx_j \rangle=0$.
\end{remark}
\begin{corollary}
\label{corrDelta10}
If $\ma=\mU\mv^*+\alpha\mi_n$, where $\mU$ and $\mv$ are unitary matrices with $\trace(\mU^*\mv)=0$, then $\delta_i(\ma)=1$.
\end{corollary}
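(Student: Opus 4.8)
The plan is to reduce the claim to showing that every discrepancy value of the unitary matrix $\mU\mv^*$ equals $1$, and then to verify this by exhibiting a single optimal witness in the compact representation~\ref{compact_Discnorm}. First I would use the eigenvalue shift invariance $\vdelta(\ma-\alpha\mi_n)=\vdelta(\ma)$ to discard the scalar term and set $\mb:=\mU\mv^*=\ma-\alpha\mi_n$, so that $\vdelta(\ma)=\vdelta(\mb)$. Being a product of unitaries, $\mb$ is itself unitary, hence $\vsigma(\mb)=\mathbf{1}_n$ and in particular $\|\mb\|_{(n)}=n$.

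Next I would evaluate the top discrepancy norm $\|\mb\|^{\delta}_{(n)}$ directly. Taking $k=n$ in~\ref{compact_Discnorm}, the rank-$n$ traceless partial isometries in $\mathcal{P}^0_n(n)$ are precisely the trace-zero unitaries, so $\|\mb\|^{\delta}_{(n)}=\max\,\R\trace(\mb\mm)$ over unitary $\mm$ with $\trace(\mm)=0$. The crucial point is that the hypothesis $\trace(\mU^*\mv)=0$ is exactly what makes the choice $\mm:=\mv\mU^*$ admissible: it is unitary, and $\trace(\mm)=\trace(\mv\mU^*)=\trace(\mU^*\mv)=0$. For this witness $\mb\mm=\mU\mv^*\mv\mU^*=\mi_n$, whence $\R\trace(\mb\mm)=n$ and therefore $\|\mb\|^{\delta}_{(n)}\ge n$.

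Finally, combining this lower bound with the general estimate $\|\mb\|^{\delta}_{(n)}\le\|\mb\|_{(n)}=n$ gives $\sum_{i=1}^n\delta_i(\mb)=n$. Since $\vdelta(\mb)\prec_w\vsigma(\mb)=\mathbf{1}_n$ forces $\delta_1(\mb)\le 1$, and the discrepancy values are non-increasing, each $\delta_i(\mb)\le 1$; but $n$ numbers in $[0,1]$ that sum to $n$ must all equal $1$, so $\delta_i(\mb)=1$ for every $i$, which is the claim.

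The step I expect to be the most delicate is the middle one, and it is worth flagging explicitly. The given hypothesis $\trace(\mU^*\mv)=0$ is strictly weaker than the condition $\diag(\mU^*\mv)=0$ appearing in Proposition~\ref{singeqdisc}, so one cannot simply apply that proposition to the obvious singular value decomposition $\mb=\mU\,\mi_n\,\mv^*$. Instead the trace-zero hypothesis must be used at the single top level $k=n$, where it is precisely what places the witness $\mm=\mv\mU^*$ inside $\mathcal{P}^0_n(n)$; getting this admissibility right, together with the saturation argument via weak majorization, is the heart of the proof.
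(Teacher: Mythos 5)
Your proof is correct, but it takes a genuinely different route from the paper's. The paper's proof is a one-line reduction to Proposition~\ref{singeqdisc}: since $\mU^*\mv$ is traceless, there exists a unitary $\mq$ with $\diag(\mq^*\mU^*\mv\mq)=0$ (a nontrivial fact, essentially Fillmore's theorem that a trace-zero matrix is unitarily similar to one with zero diagonal), so replacing $(\mU,\mv)$ by $(\mU\mq,\mv\mq)$ leaves $\ma$ unchanged while producing an SVD of $\ma-\alpha\mi_n$ satisfying the $\diag(\tilde\mU^*\tilde\mv)=0$ hypothesis of that proposition, whence $\vdelta(\ma)=\vsigma(\mU\mv^*)=\mathbf{1}_n$. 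You correctly identified that the trace-zero hypothesis is weaker than the zero-diagonal hypothesis and that Proposition~\ref{singeqdisc} cannot be applied to the given SVD as it stands; but instead of bridging the gap with the similarity trick, you work only at the top level $k=n$: the witness $\mm=\mv\mU^*\in\mathcal{P}^0_n(n)$ gives $\R\trace(\mb\mm)=n$ in representation~\ref{compact_Discnorm}, so $\|\mb\|^{\delta}_{(n)}=n$, and the saturation argument via $\vdelta(\mb)\prec_w\vsigma(\mb)=\mathbf{1}_n$ together with monotonicity of the $\delta_i$ forces $\delta_i(\mb)=1$ for all $i$. Each step checks out (in particular $\trace(\mv\mU^*)=\trace(\mU^*\mv)=0$ makes the witness admissible, and $\mb\mm=\mi_n$). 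What the two approaches buy: the paper's route is shorter and additionally produces explicit discrepancy vectors through the SVD structure, but it silently invokes the zero-diagonal similarity fact without proof; your route is entirely self-contained within the tools the paper has already established (shift invariance, the compact maximal representation, $\|\cdot\|^{\delta}_{(k)}\leq\|\cdot\|_{(k)}$, and the ordering of discrepancy values), at the cost of being non-constructive about the optimizing vectors at intermediate levels $k<n$.
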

\begin{proof}
It follows by the fact that there exists a unitary matrix $\mq$ such that for $\tilde{\mU}=\mU\mq$ and $\tilde{\mv}=\mv\mq$ we have $\Diag(\tilde{\mU}^*\tilde{\mv})=0$ and $\ma=\tilde{\mU}\tilde{\mv}^*+\alpha\mi_n$.
\end{proof}
\begin{corollary}
The singular values and the discrepancy values of matrices of the form $\mb=\mq\ma\mq^*$, where $\ma$ is an antidiagonal matrix and $\mq$ unitary, are equal to the absolute value of the anti-diagonal entries of $\ma$. 
\end{corollary}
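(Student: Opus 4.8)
The plan is to strip off the unitary conjugation with the invariances already in hand, and then reduce everything to Proposition~\ref{singeqdisc}. Singular values are invariant under multiplication by unitaries on either side, so $\vsigma(\mb)=\vsigma(\mq\ma\mq^*)=\vsigma(\ma)$, and by the invariance of $\vdelta$ under unitary similarity we also have $\vdelta(\mb)=\vdelta(\ma)$; hence it suffices to treat the antidiagonal matrix $\ma$ on its own. I would factor $\ma=\mathbf{D}\mj_n$, where $\mathbf{D}$ is the diagonal matrix whose $i$-th entry $a_i$ is the antidiagonal entry in row $i$ and $\mj_n$ is the exchange matrix. Since $\mj_n$ is unitary, $\vsigma(\ma)=\vsigma(\mathbf{D})=(|a_1|,\dots,|a_n|)^{\downarrow}$, which already settles the singular-value half of the claim for both $\ma$ and $\mb$.

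For the discrepancy values I would apply Proposition~\ref{singeqdisc}, which asserts that $\vdelta(\ma)=\vsigma(\ma)$ precisely when $\ma$ admits a singular value decomposition $\ma=\mU\Sigma\mv^*$ with $\diag(\mU^*\mv)=0$. To build a candidate, split $\mathbf{D}=\Sigma\Phi$ into its modulus part $\Sigma=\lvert\mathbf{D}\rvert$ (a nonnegative diagonal) and its phase part $\Phi$ (a diagonal unitary), so that $\ma=\Sigma\Phi\mj_n$. Setting $\mU=\mi_n$ and $\mv=\mj_n\Phi^*$, both unitary, gives $\mU\Sigma\mv^{*}=\Sigma(\mj_n\Phi^{*})^{*}=\Sigma\Phi\mj_n=\mathbf{D}\mj_n=\ma$ (using $\mj_n^{*}=\mj_n$), a genuine SVD whose singular values are the diagonal of $\Sigma$. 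The only remaining thing to verify is the orthogonality condition $\diag(\mU^{*}\mv)=\diag(\mj_n\Phi^{*})=0$.

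The one delicate point---and the step I expect to be the real obstacle---is this last diagonal condition. A direct computation gives $(\mj_n\Phi^{*})_{ii}=\overline{\Phi_{ii}}$ when $i=n-i+1$ and $0$ otherwise, so every diagonal entry vanishes except possibly at the central index $i_0=(n+1)/2$, which occurs only when $n$ is odd. For even $n$ no such index exists, $\diag(\mU^{*}\mv)=0$ holds outright, and Proposition~\ref{singeqdisc} delivers $\vdelta(\ma)=\vsigma(\ma)=(|a_i|)^{\downarrow}$, completing the corollary. For odd $n$ the central antidiagonal entry $a_{i_0}$ lies on the main diagonal, and the necessity direction of the proposition shows the obstruction is genuine: any SVD with $\diag(\mU^{*}\mv)=0$ forces $\trace(\ma)=\sum_i\sigma_i\langle\mathbf{u}_i,\mathbf{v}_i\rangle=0$, whereas here $\trace(\ma)=a_{i_0}$. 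Thus $\vdelta=\vsigma$ for an odd-order antidiagonal matrix holds exactly when $a_{i_0}=0$, i.e.\ when $\ma$ is traceless---the regime relevant to the commutator application---so I would phrase the corollary for even $n$ (or traceless $\ma$) and flag the central entry as the point requiring care.
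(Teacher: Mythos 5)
Your argument is correct and is essentially the proof the paper has in mind: the corollary is stated there without proof as an immediate consequence of Proposition~\ref{singeqdisc}, and your chain --- strip $\mq$ by the two unitary invariances, factor $\ma = \mathbf{D}\mj_n = \Sigma\Phi\mj_n$, take $\mU = \mi_n$, $\mv = \mj_n\Phi^*$, and check $\diag(\mU^*\mv)=0$ --- is exactly the computation left to the reader. The only cosmetic point you omit is reordering so that $\Sigma$ is nonincreasing, which is harmless: permuting the columns of $\mU$ and $\mv$ simultaneously conjugates $\mU^*\mv$ by a permutation matrix and hence preserves the vanishing diagonal.

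Where you go beyond the paper is the odd-$n$ caveat, and you are right: the corollary as printed fails when $n$ is odd and the central antidiagonal entry $a_{(n+1)/2}$ --- which sits on the main diagonal --- is nonzero. Your trace obstruction is sound, since $\diag(\mU^*\mv)=0$ in any singular value decomposition forces $\trace(\ma)=0$, whereas here $\trace(\ma)=a_{(n+1)/2}$. Concretely, take $\ma=\mj_3$: then $\vsigma(\mj_3)=(1,1,1)$, yet $\mj_3$ is Hermitian with eigenvalues $(1,1,-1)$, so the paper's own formula $\vdelta(\ma)=|\vlambda^{\downarrow}(\ma)-\vlambda^{\uparrow}(\ma)|^{\downarrow}/2$ gives $\vdelta(\mj_3)=(1,1,0)$, and one can also see directly that $\min_{\alpha}\|\mj_3-\alpha\mi_3\|_{(3)}\leq \|\mj_3-\mi_3\|_{(3)}=2<3$. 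So your restriction to even $n$, or to odd $n$ with vanishing central entry (equivalently, traceless $\ma$, the regime relevant to the commutator application), is the correct form of the statement, and flagging this is a genuine improvement on the paper.
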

We next note a concrete setting where discrepancy values equal singular values.
\begin{proposition}
  \label{prop:hamil}
For a $2n \times 2n$ Hamiltonian matrix $\mh$, we have
\[
\vdelta(\mh)=\vsigma(\mh).
\]
\end{proposition}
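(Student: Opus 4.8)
The plan is to bypass the singular-value decomposition entirely and work with the minimal representation of the discrepancy norms from Theorem~\ref{lemma_min}, namely $\|\mh\|^{\delta}_{(k)}=\min_{\alpha\in\mathbb C}\|\mh-\alpha\mi_{2n}\|_{(k)}$. Since one always has $\|\mh\|^{\delta}_{(k)}\le\|\mh\|_{(k)}$ (take $\alpha=0$), it suffices to show that for a Hamiltonian matrix the minimizing shift is always $\alpha=0$; then $\|\mh\|^{\delta}_{(k)}=\|\mh\|_{(k)}$ for every $k$, which is exactly $\vdelta(\mh)=\vsigma(\mh)$.

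First I would record the defining relation. Writing $\Omega=\begin{pmatrix}\mathbf 0 & \mi_n\\ -\mi_n & \mathbf 0\end{pmatrix}$ for the standard symplectic matrix, the Hamiltonian condition $\Omega\mh+\mh^{\top}\Omega=\mathbf 0$ is equivalent to $\Omega\mh\Omega^{-1}=-\mh^{\top}$. Because $\Omega$ is real orthogonal, hence unitary with $\Omega^{-1}=\Omega^{*}$, the matrix $\mh-\alpha\mi_{2n}$ is \emph{unitarily similar} to $\Omega(\mh-\alpha\mi_{2n})\Omega^{-1}=-\mh^{\top}-\alpha\mi_{2n}=-(\mh+\alpha\mi_{2n})^{\top}$ for every $\alpha\in\mathbb C$. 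Unitary similarity, transposition, and negation all leave singular values unchanged, so $\vsigma(\mh-\alpha\mi_{2n})=\vsigma(\mh+\alpha\mi_{2n})$ for all $\alpha$. Consequently $f(\alpha):=\|\mh-\alpha\mi_{2n}\|_{(k)}$ is an \emph{even} function of $\alpha\in\mathbb C\cong\mathbb R^2$, and it is also \emph{convex}, being the composition of the norm $\|\cdot\|_{(k)}$ with the affine map $\alpha\mapsto\mh-\alpha\mi_{2n}$. An even convex function obeys $f(0)\le\tfrac12\big(f(\alpha)+f(-\alpha)\big)=f(\alpha)$, so $\alpha=0$ is a global minimizer. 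Substituting into Theorem~\ref{lemma_min} gives $\|\mh\|^{\delta}_{(k)}=f(0)=\|\mh\|_{(k)}$, and since $k$ was arbitrary we conclude $\vdelta(\mh)=\vsigma(\mh)$.

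The step needing the most care — and the conceptual crux — is that the defining relation must use the \emph{transpose} $\mh^{\top}$ (the symplectic, i.e.\ $\mathfrak{sp}(2n,\mathbb C)$, convention), not the conjugate transpose. It is precisely $\mh^{\top}$ that upgrades the similarity to the full reflection $\vsigma(\mh-\alpha\mi_{2n})=\vsigma(\mh+\alpha\mi_{2n})$ and thereby makes $f$ even in $\alpha$. Under the alternative ``$\Omega\mh$ Hermitian'' convention one would only get $\vsigma(\mh-\alpha\mi_{2n})=\vsigma(\mh+\bar\alpha\mi_{2n})$, i.e.\ symmetry of $f$ under reflection across the imaginary axis, which forces only $\mathrm{Re}\,\alpha=0$ at the optimum and does \emph{not} pin $\alpha=0$; there the statement genuinely fails (for instance $\mh=i\mi_2$ satisfies that convention yet $\vsigma(\mh)=(1,1)$ while $\vdelta(\mh)=(0,0)$ by shift invariance). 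So the main obstacle is really stating the correct hypothesis and verifying the three invariances of $\vsigma$ under similarity, transpose, and negation.

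As a cross-check I would also note the more hands-on route through Proposition~\ref{singeqdisc}: in the real case one can choose an SVD $\mh=\mU\Sigma\mv^{\top}$ with real singular vectors for which $\Omega\mh\Omega^{-1}=-\mh^{\top}$ forces $\vv_i=\pm\Omega\vu_i$, whence $\langle\vu_i,\vv_i\rangle=\pm\vu_i^{\top}\Omega\vu_i=0$ since $\Omega$ is skew-symmetric, so Proposition~\ref{singeqdisc} applies directly. This is illuminating but requires separately handling repeated and zero singular values (where the SVD is nonunique and the pairing $\vv_i=\pm\Omega\vu_i$ must be arranged), precisely the bookkeeping that the convex minimax argument above sidesteps; hence I would present the minimax route as the main proof and relegate the SVD argument to a remark.
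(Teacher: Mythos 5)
Your proof is correct, and it takes a genuinely different route from the paper's. The paper's own proof is essentially your closing remark promoted to the main argument: it writes $\mh=\mj S$ with $S$ symmetric, reads off an SVD $\mh=(\mj\mq)\msigma(\mq\mD)^{*}$ from $S=\mq\mLambda\mq^{*}$ with $\mLambda=\msigma\mD$, checks $\diag(\mD\mq^{*}\mj\mq)=0$, and invokes the ``if'' direction of Proposition~\ref{singeqdisc}; note that since only the ``if'' direction is needed, exhibiting this \emph{one} SVD suffices, so the repeated/zero singular-value bookkeeping you flagged never actually arises (the caveat in Proposition~\ref{singeqdisc} concerns the converse). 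Your main argument --- Theorem~\ref{lemma_min} plus the observation that $\Omega(\mh-\alpha\mi_{2n})\Omega^{-1}=-(\mh+\alpha\mi_{2n})^{\top}$ makes $f(\alpha)=\|\mh-\alpha\mi_{2n}\|_{(k)}$ even, so convexity pins the minimizer at $\alpha=0$ and $\|\mh\|^{\delta}_{(k)}=\|\mh\|_{(k)}$ for every $k$ --- buys several things the paper's proof does not. It never constructs singular vectors; it works verbatim for complex Hamiltonian matrices in the $\mathfrak{sp}(2n,\mathbb{C})$ (transpose) sense, whereas the paper's step $S=\mq\mLambda\mq^{*}$ is really a real-symmetric statement and would need the Takagi factorization $S=\mq\msigma\mq^{\top}$ once $S$ is complex symmetric; and it makes the convention dependence explicit --- your counterexample $\mh=i\mi_2$ under the ``$\Omega\mh$ Hermitian'' convention is correct and worth recording, since the paper never states its convention, and its final step $\diag(\mq^{*}\mj\mq)=0$ is likewise automatic only for real $\mq$ (by skew-symmetry of $\mj$), the diagonal entries being merely purely imaginary for general unitary $\mq$. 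What the paper's route buys in exchange is the explicit optimizer: the columns of $\mj\mq$ and $\mq\mD$ serve as discrepancy vectors, structural information your minimax argument deliberately discards, and the same construction is what the paper reuses immediately afterwards for $\mj$ times a normal matrix. Finally, your evenness-plus-convexity trick generalizes for free: whenever $\ma$ is unitarily similar to $-\ma^{\top}$ or to $-\ma$, the same two lines give $\vdelta(\ma)=\vsigma(\ma)$; in particular this covers every normal matrix with centrally symmetric spectrum, which strengthens the paper's separate proposition about normal matrices whose eigenvalues lie on a line symmetric about the origin. (One caution if you pursue that generalization: similarity to $-\ma^{*}$ only yields $f(\alpha)=f(-\bar{\alpha})$, the reflection you already identified as insufficient.)
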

\begin{proof}
We know that any Hamiltonian matrix can be represented as the multiplication of $\mj={\scriptsize\begin{bmatrix}
0 & \mi_n \\ -\mi_n & 0
\end{bmatrix}}$ and a symmetric matrix. Thus, we have $\mh = \mj \mq \mLambda\mq^*$. Let $\mLambda = \msigma \mD$ where $\mD$ is just a diagonal matrix with $\pm 1$ entries. 
In the singular value decomposition of $\mh=\mU \msigma \mv^*$ we have $\mu=\mj\mq$ and $\mv=\mq \mD$. To show that $\Diag(\mD\mq^*\mj\mq)=0$ one only needs to prove $\Diag(\mq^*\mj\mq)=0$, which is immediate after recognizing that $\vdelta(\mq^*\mj\mq)=\vsigma(\mq^*\mj\mq)$.
\end{proof}
\begin{remark}
  The same argument can be used to prove that the singular and discrepancy values of the multiplication of $\mj$ and any normal matrix are equal.
\end{remark}

Let us adopt the notation $\Psi(n)$ to refer to the set of $n\times n$ matrices with unit discrepancy values. Theorem~\ref{lemma_all1Delts} states that this class is equivalent to the scalar shifts of traceless unitary matrices.

\begin{theorem}
\label{lemma_all1Delts}
A square matrix belongs to $\Psi(n)$ iff it has the form $\mm-\alpha \mi_n$, where $\mm$ is unitary and $\trace\mm=0$, while $\alpha$ an arbitrary complex number.
\end{theorem}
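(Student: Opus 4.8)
The plan is to prove the two implications separately, and it is cleanest (and in fact correct) to take the condition on $\mm$ to be that its full trace vanishes, $\trace(\mm)=0$. For the ``if'' direction, suppose $\ma=\mm-\alpha\mi_n$ with $\mm$ unitary and $\trace(\mm)=0$. By the shift invariance $\vdelta(\ma-\alpha\mi_n)=\vdelta(\ma)$ it suffices to check that $\delta_i(\mm)=1$ for every $i$. This is immediate from Corollary~\ref{corrDelta10} applied with $\mU=\mm$, $\mv=\mi_n$ and scalar $0$: the hypothesis there is $\trace(\mU^*\mv)=\trace(\mm^*)=\overline{\trace(\mm)}=0$, which holds, and its conclusion is exactly $\delta_i(\mm)=1$.

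For the ``only if'' direction, assume $\vdelta(\ma)=\mathbf{1}_n$, equivalently $\|\ma\|^{\delta}_{(k)}=k$ for all $k$. The crucial move is to extract the correct shift from the case $k=1$. By Theorem~\ref{lemma_min}, $\delta_1(\ma)=\|\ma\|^{\delta}_{(1)}=\min_{\alpha\in\mathbb{C}}\sigma_1(\ma-\alpha\mi_n)=1$; I would let $\alpha_1$ attain this minimum and set $\mm:=\ma-\alpha_1\mi_n$. Then $\sigma_1(\mm)=1$, so every singular value of $\mm$ is at most $1$ and $\|\mm\|_{(k)}=\sum_{i=1}^{k}\sigma_i(\mm)\le k$ for all $k$. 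On the other hand, shift invariance gives $\vdelta(\mm)=\vdelta(\ma)=\mathbf{1}_n$, so $\|\mm\|^{\delta}_{(k)}=k$, and combining this with the basic inequality $\|\mm\|^{\delta}_{(k)}\le\|\mm\|_{(k)}$ sandwiches $k\le\|\mm\|_{(k)}\le k$. Hence $\|\mm\|_{(k)}=k$ for all $k$, and therefore $\sigma_k(\mm)=\|\mm\|_{(k)}-\|\mm\|_{(k-1)}=1$ for every $k$; that is, $\mm$ is unitary.

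It remains to show $\trace(\mm)=0$. Because $\mm$ is unitary it is normal, so $\mm-\alpha\mi_n$ is normal with $\sigma_j(\mm-\alpha\mi_n)=|e^{i\phi_j}-\alpha|$, where $e^{i\phi_j}$ are the eigenvalues of $\mm$. Applying Theorem~\ref{lemma_min} at $k=n$ gives $n=\|\mm\|^{\delta}_{(n)}=\min_{\alpha\in\mathbb{C}}\sum_{j}|e^{i\phi_j}-\alpha|$. Since this convex function takes the value $n$ at $\alpha=0$, the point $\alpha=0$ is a minimizer; as the function is differentiable there (each $e^{i\phi_j}\neq 0$), its gradient must vanish at $0$, which reads $\sum_j e^{i\phi_j}=\trace(\mm)=0$. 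Setting $\alpha:=-\alpha_1$, we conclude $\ma=\mm-\alpha\mi_n$ with $\mm$ a traceless unitary, completing the characterization.

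I expect the main obstacle to be precisely the sandwiching argument and, relatedly, the choice of which optimal shift to use. Working with $k=n$ alone is not enough: it only forces $\sum_i\sigma_i(\mm)=n$ and leaves the singular values free to \emph{majorize} rather than equal the all-ones vector. The decisive observation is that the $k=1$ optimal shift (the spectral distance of $\ma$ to the scalars) caps \emph{every} singular value by $\sigma_1(\mm)=1$, after which the inequality $\|\mm\|^{\delta}_{(k)}\le\|\mm\|_{(k)}$ pins each $\sigma_k(\mm)$ to exactly $1$. Fixing the shift via the $k=1$ problem rather than the $k=n$ problem is the subtle point; everything else is routine once $\mm$ is known to be unitary.
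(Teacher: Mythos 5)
Your proof is correct, and its main body follows the paper's route, with one genuinely different step at the end. For the ``if'' direction you and the paper both reduce to Corollary~\ref{corrDelta10}. For unitarity in the ``only if'' direction, the paper likewise fixes the $k=1$ optimal shift $\hat{\alpha}$ with $\sigma_1(\ma-\hat{\alpha}\mi_n)=\delta_1(\ma)=1$ and then pins down $\sigma_k(\ma-\hat{\alpha}\mi_n)=1$ inductively from $\sum_{i=1}^k\sigma_i(\ma-\hat{\alpha}\mi_n)\geq\sum_{i=1}^k\delta_i(\ma)=k$ together with $\sigma_k\leq\sigma_1$; your sandwich $k=\|\mm\|^{\delta}_{(k)}\leq\|\mm\|_{(k)}\leq k\,\sigma_1(\mm)=k$ is the same argument in cleaner packaging, and your closing remark about why the $k=1$ shift (rather than the $k=n$ shift) must be used identifies exactly the point the paper's induction exploits. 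Where you depart is the trace-zero step: the paper invokes Proposition~\ref{singeqdisc}, obtaining an SVD $\ma-\hat{\alpha}\mi_n=\mU\mv^*$ with $\diag(\mU^*\mv)=0$, whence $\trace(\mU\mv^*)=\trace(\mv^*\mU)=\overline{\trace(\mU^*\mv)}=0$; you instead observe via Theorem~\ref{lemma_min} at $k=n$ that $\alpha=0$ minimizes the convex function $\alpha\mapsto\sum_j|e^{i\phi_j}-\alpha|$, and vanishing of its gradient there (legitimate, since each $e^{i\phi_j}\neq 0$ makes the function differentiable at $0$) gives $\trace(\mm)=\sum_j e^{i\phi_j}=0$. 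Both are valid, but your route has a real advantage: the paper's proof of Proposition~\ref{singeqdisc} is only sketched for matrices with repeated singular values, and a unitary matrix (all singular values equal to $1$) is the maximally degenerate case, so the paper leans on precisely the part of that proposition it treats least carefully, while your first-order optimality argument is self-contained. Finally, your decision to read the hypothesis as $\trace(\mm)=0$ rather than the stated $\R\trace(\mm)=0$ is the correct repair: Corollary~\ref{corrDelta10} genuinely requires $\trace(\mU^*\mv)=0$, and for instance $\mm=\Diag(e^{i\pi/3},e^{2i\pi/3})$ satisfies $\R\trace(\mm)=0$ yet has $\vdelta(\mm)=\tfrac{1}{2}\mathbf{1}_2\neq\mathbf{1}_2$, so the parenthetical ``(real part of)'' in the statement cannot be taken literally, and your proof establishes the theorem in its corrected (full-trace) form.
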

\begin{proof}
We already proved in Corollary~\ref{corrDelta10} that $\mm-\alpha \mi_n \in \Psi(n)$ when $\mm$ is a traceless unitary matrix. 
By the definition of discrepancy values, there exists $\hat{\alpha}$ such that $\sigma_1(\ma-\hat{\alpha}\mi_n) = \delta_1(\ma)=1$, and $\sigma_1(\ma-\hat{\alpha}\mi_n) + \sigma_2(\ma-\hat{\alpha}\mi_n) \geq \delta_1(\ma)+\delta_2(\ma)=2$. Therefore, $\sigma_2(\ma-\hat{\alpha}\mi_n) \geq 1$, but we assumed that $\sigma_2(\ma-\hat{\alpha}\mi_n) \leq \sigma_1(\ma-\hat{\alpha}\mi_n)$; hence $\sigma_2(\ma-\hat{\alpha}\mi_n)=\delta_2(\ma)=1$. Using the same argument we can show that $\sigma_k(\ma-\hat{\alpha}\mi_n)=\delta_k(\ma)=1$, for $1\leq k \leq n$. Finally, by proposition~\ref{singeqdisc} we know that matrix $\ma -\hat{\alpha}\mi_n$ has a decomposition $\mU\mv^*$ where $\Diag(\mU^*\mv)=0$.
\end{proof}
In other words, a matrix is in $\Psi(n)$ iff for some $\alpha$ its eigenvalues satisfy $|\lambda_i-\alpha|=1$ and $\sum_{i=1}^n\lambda_i=0$, which always has roots for $n\geq 2$. Thus, $\Psi(n)$ is not empty.

Before studying the class of matrices with zero or one discrepancy values, let us define the notion of \emph{discrepancy-rank}.
\begin{definition} Let $\ma \in \C^{n\times n}$. We define \emph{discrepancy-rank} of $\ma$ as
\begin{equation}
    r^{\delta}(\ma) := \text{the number of non-zero entries of } \vdelta(\ma).
  \end{equation}
\end{definition}
One can verify that $r^{\delta}(\ma) = r(\ma-\bar{\alpha}\mi_n)$ where $\bar{\alpha} = \argmin_{\alpha \in \mathbb{C}}\|\ma-\alpha\mi_n\|_{(n)}$.
We can also see that $r^{\delta}(\ma)=0$ iff $\ma=\alpha \mi_n$ for a complex number $\alpha$. And $r^{\delta}(\ma)=1$ iff $\ma = \beta \vu \vv^*+\alpha \mi_n$ for some (complex) $\alpha$ and $\beta$, and orthogonal unit vectors $\vu$ and $\vv$.

\begin{definition}
Let $\Psi_{k}(n)$ denote the set of $n \times n$ matrices with discrepancy-rank $k$ with unit non-zero discrepancy values (i.e. $\delta_i=1$ for $i=1,\dots, k$).
\end{definition}

Now we want to explore the relation between $\Psi_{k}(n)$ and the set $\mathcal{P}^0_{k}(n)$. Note that $\mathcal{P}^0_{k}(n) \neq \Psi_{k}(n)$; for instance, consider $\mm=\diag([1,3,4])$\footnote{ or any scalar shift of it.} which belongs to $\Psi_{2}(3)$ but not to $\mathcal{P}^0_{2}(3)$. However, we can state the following inclusion:
\begin{lemma}
    $\mathcal{P}^0_{k}(n) \subset \Psi_{k}(n)$.
\end{lemma}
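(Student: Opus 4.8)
The plan is to show that every $\mx \in \mathcal{P}^0_{k}(n)$ satisfies $\vdelta(\mx) = \mathbf{1}_k$, since this is exactly the condition defining $\Psi_{k}(n)$ (discrepancy-rank $k$ with all nonzero discrepancy values equal to $1$). The natural first step is to record the singular spectrum of such a matrix. Writing $\mx = \mv\mU^*$ with $\mU^*\mU = \mv^*\mv = \mi_k$, I would compute $\mx^*\mx = \mU\mv^*\mv\mU^* = \mU\mU^*$, a rank-$k$ orthogonal projection, so that $\vsigma(\mx) = \mathbf{1}_k$ and in particular $\|\mx\|_{(k)} = k$ and $\|\mx\|_{(n)} = k$.

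The heart of the argument is a matching lower bound for $\|\mx\|^{\delta}_{(k)}$, and here I would use the maximal representation~\eqref{compact_Discnorm}, namely $\|\mx\|^{\delta}_{(k)} = \max_{\mm \in \mathcal{P}^0_{k}(n)} \R\trace(\mx\mm)$. The key observation is that the conjugate transpose $\mx^* = \mU\mv^*$ is itself a rank-$k$ partial isometry, and it is traceless since $\trace(\mx^*) = \overline{\trace(\mx)} = 0$; hence $\mx^* \in \mathcal{P}^0_{k}(n)$ is an admissible competitor. Taking $\mm = \mx^*$ gives
\[
\|\mx\|^{\delta}_{(k)} \;\ge\; \R\trace(\mx\mx^*) \;=\; \trace(\mv\mv^*) \;=\; k.
\]
Combined with the upper bound $\|\mx\|^{\delta}_{(k)} \le \|\mx\|_{(k)} = k$ coming from the weak majorization $\vdelta(\mx) \prec_w \vsigma(\mx)$, this pins down $\sum_{i=1}^k \delta_i(\mx) = k$.

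It then remains to extract the individual values. From $\vdelta(\mx) \prec_w \vsigma(\mx) = \mathbf{1}_k$ I get $\delta_i(\mx) \le 1$ for every $i$; since $\delta_1(\mx) \ge \cdots \ge \delta_k(\mx) \ge 0$ are $k$ numbers, each at most $1$, whose sum is $k$, they must all equal $1$. For the tail, $\sum_{i=1}^n \delta_i(\mx) = \|\mx\|^{\delta}_{(n)} \le \|\mx\|_{(n)} = k$ together with $\delta_i(\mx) \ge 0$ forces $\delta_{k+1}(\mx) = \cdots = \delta_n(\mx) = 0$. Therefore $\vdelta(\mx) = \mathbf{1}_k$, i.e. $\mx \in \Psi_{k}(n)$.

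The only nonroutine ingredient is the insertion of $\mm = \mx^*$ into the variational formula~\eqref{compact_Discnorm}; everything else follows mechanically from $\vdelta \prec_w \vsigma$ and the explicit singular values of a partial isometry, so I do not anticipate a genuine obstacle. The one point to verify with care is that conjugate transposition preserves membership in $\mathcal{P}^0_{k}(n)$ — that it keeps the matrix a rank-$k$ partial isometry is clear, and the traceless constraint survives because $\trace(\mx^*) = \overline{\trace(\mx)}$.
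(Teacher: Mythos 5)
Your proof is correct and is essentially the paper's own argument in matrix form: inserting $\mm=\mx^*$ into the maximal representation~\ref{compact_Discnorm} is precisely the same move as the paper's choice of the left and right singular vectors of $\mx$ as the vectors $\vx_i,\vy_i$ in Definition~\ref{main_def}, with tracelessness guaranteeing feasibility in both formulations. The remaining steps (the upper bound via $\vdelta(\mx)\prec_w \vsigma(\mx)=\mathbf{1}_k$ and pinning down the individual values) match the paper's conclusion, so there is no gap.
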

\begin{proof}
Consider a matrix $\mm \in \mathcal{P}^0_{k}(n)$. Observe that $\|\mm\|_{k}^{\delta} = k$ by using the left and right singular vectors of $\mm$ as the vectors $\vx$ and $\vy$ in~\eqref{main_def}. On the other hand, we know that $\vdelta(\mm) \prec_w \mathbf{1}_k$. Therefore, the only possibility is that $\vdelta(\mm)=\mathbf{1}_k$.
\end{proof}

\subsection{Basic majorization inequalities}
\label{sec:basicMajor}
A plethora of majorization inequalities for singular values exists in the literature. In this part, we investigate some majorization results for discrepancy values. 
We also see that the discrepancy values satisfy many of the majorization bounds known for the spectral spread of Hermitian matrices, which underscores our claim that discrepancy values are a true generalization of spectral spread. First, let us state two trivial inequalities.

\begin{proposition}
For $n \times n$ matrices $\ma$ and $\mb$, one has
\begin{enumerate}
\label{prop:easy2prove}
    \item $|\vdelta(\ma)-\vdelta(\mb)| \prec_w \vdelta(\ma\pm\mb) \prec_w \vdelta(\ma)+\vdelta(\mb)$.
    \item $\vdelta(\ma) \prec_w \vsigma(\ma)$.
\end{enumerate}
\end{proposition}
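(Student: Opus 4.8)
The plan is to treat the two items separately, disposing of item (2) and the right-hand (upper) inequality of item (1) quickly, and then concentrating on the left-hand (lower) inequality of item (1), which is the only genuinely nontrivial part. Throughout it suffices to argue for the $+$ sign: since $\vdelta(-\mb)=\vdelta(\mb)$ by the eigenvalue-rotation invariance (take $\theta=\pi$), every statement for $\ma-\mb$ follows from the corresponding one for $\ma+\mb$ by substituting $-\mb$ for $\mb$. Item (2) needs nothing new: the program defining $\|\ma\|^{\delta}_{(k)}$ in~\ref{main_def} is the one defining $\|\ma\|_{(k)}$ with the extra constraint $\sum_{j=1}^k\langle\vx_j,\vy_j\rangle=0$, and an additional constraint can only lower a maximum, so $\|\ma\|^{\delta}_{(k)}\le\|\ma\|_{(k)}$ for every $k$; as $\vdelta(\ma)$ and $\vsigma(\ma)$ are both nonincreasing, these partial-sum inequalities are exactly $\vdelta(\ma)\prec_w\vsigma(\ma)$.

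For the upper inequality of item (1) I would first record that $\|\cdot\|^{\delta}_{(k)}$ is subadditive. This is immediate from the convex-hull representation~\ref{cmp_Discnorm}, since $\|\ma\|^{\delta}_{(k)}=\max_{\mm\in\text{Conv}(\mathcal{P}^0_{k}(n))}\R\trace(\ma\mm)$ is the support function of a fixed compact convex set, hence sublinear; thus $\|\ma+\mb\|^{\delta}_{(k)}\le\|\ma\|^{\delta}_{(k)}+\|\mb\|^{\delta}_{(k)}$. Because $\vdelta(\ma)$ and $\vdelta(\mb)$ are each nonincreasing, so is their entrywise sum, whence $\sum_{i=1}^k\delta_i(\ma+\mb)\le\sum_{i=1}^k(\delta_i(\ma)+\delta_i(\mb))$ is precisely $\vdelta(\ma+\mb)\prec_w\vdelta(\ma)+\vdelta(\mb)$.

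The lower inequality $|\vdelta(\ma)-\vdelta(\mb)|\prec_w\vdelta(\ma\pm\mb)$ is the crux, and here the main obstacle is visible. The reverse triangle inequality for the seminorm $\|\cdot\|^{\delta}_{(k)}$ yields only $\bigl|\sum_{i=1}^k\delta_i(\ma)-\sum_{i=1}^k\delta_i(\mb)\bigr|\le\sum_{i=1}^k\delta_i(\ma-\mb)$, i.e. control of the \emph{difference of partial sums}, which is strictly weaker than the claim, which controls the partial sums of the \emph{rearranged} vector $|\vdelta(\ma)-\vdelta(\mb)|^{\downarrow}$. Splitting the top-$k$ indices of $|\delta_i(\ma)-\delta_i(\mb)|$ into a positive block $P$ and a negative block $N$ and bounding each block separately does not close the gap: for a nonincreasing nonnegative $\vw$ one has $\sum_{i=1}^{|P|}w_i+\sum_{i=1}^{|N|}w_i\ge\sum_{i=1}^{k}w_i$, the wrong direction. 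So subadditivity alone cannot suffice and a Lidskii/Weyl-type refinement is needed.

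My plan for the lower bound is to reduce it to the known singular-value inequality $|\vsigma(\mp)-\vsigma(\mq)|\prec_w\vsigma(\mp-\mq)$. Using the minimal representation of Theorem~\ref{lemma_min}, it is enough to show $\sum_{i=1}^k|\delta_i(\ma)-\delta_i(\mb)|^{\downarrow}\le\|\ma-\mb-\gamma\mi_n\|_{(k)}$ for every $\gamma\in\mathbb{C}$, since the right-hand side of the claim is $\min_\gamma$ of this Ky-Fan norm; writing $\mp=\ma-\alpha\mi_n$ and $\mq=\mb-\beta\mi_n$ with $\alpha-\beta=\gamma$ turns the right-hand side into $\|\mp-\mq\|_{(k)}$, to which the singular-value inequality applies. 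The remaining step — choosing the shifts $\alpha,\beta$ (exploiting the looseness/slackness invariance and, via the Hermitian reduction $\vdelta\leftrightarrow\text{spr}^+$, the spectral-spread majorization inequalities of Massey et al.) so that $|\vdelta(\ma)-\vdelta(\mb)|$ is weakly majorized by $|\vsigma(\mp)-\vsigma(\mq)|$ — is where I expect the difficulty to concentrate, being in essence a Lidskii-type argument on the doubled/dilated Hermitian picture that upgrades the partial-sum-difference bound to the full rearranged weak majorization.
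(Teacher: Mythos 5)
Items (2) and the right-hand inequality of item (1) are handled correctly: your constraint-restriction argument for $\vdelta(\ma)\prec_w\vsigma(\ma)$ is exactly the observation the paper itself makes right after Definition~\ref{Def:radii}, and subadditivity of $\|\cdot\|^{\delta}_{(k)}$ (via the support-function representation~\ref{cmp_Discnorm}, or equivalently the minimal representation of Theorem~\ref{lemma_min} plus the triangle inequality for Ky-Fan norms) together with the fact that $\vdelta(\ma)$, $\vdelta(\mb)$ are already nonincreasing gives $\vdelta(\ma\pm\mb)\prec_w\vdelta(\ma)+\vdelta(\mb)$. Your reduction of the $\pm$ cases via $\vdelta(-\mb)=\vdelta(\mb)$ is also fine. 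For what it is worth, the paper offers no proof of this proposition at all --- it is announced as ``two trivial inequalities'' --- so there is no authorial argument to compare against; your diagnosis that the left-hand inequality is \emph{not} trivial (the reverse triangle inequality controls only differences of partial sums, and the block-splitting of the top-$k$ indices fails because $\sum_{i=1}^{p}w_i+\sum_{i=1}^{q}w_i\geq\sum_{i=1}^{p+q}w_i$ points the wrong way) is accurate and is a genuinely useful observation about the statement.

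However, your proposal does not prove the left-hand inequality $|\vdelta(\ma)-\vdelta(\mb)|\prec_w\vdelta(\ma\pm\mb)$, and the reduction you sketch has a concrete obstruction beyond the difficulty you flag. The plan requires shifts $\alpha,\beta$ with $\alpha-\beta=\gamma$ such that $|\vdelta(\ma)-\vdelta(\mb)|$ is weakly majorized by $|\vsigma(\mp)-\vsigma(\mq)|$ for $\mp=\ma-\alpha\mi_n$, $\mq=\mb-\beta\mi_n$. But the discrepancy vector is in general \emph{not} the singular value vector of any single shift of $\ma$: the minimizing $\alpha$ in Theorem~\ref{lemma_min} depends on $k$. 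For example, for $\ma=\Diag(3,0,-1)$ one has $\vdelta(\ma)=(2,2,0)$, while $\vsigma(\ma-\alpha\mi_3)$ consists of $|3-\alpha|,|\alpha|,|1+\alpha|$; forcing a zero singular value requires $\alpha\in\{3,0,-1\}$, none of which yields $(2,2,0)$. So no choice of fixed shifts can convert $|\vdelta(\ma)-\vdelta(\mb)|$ into $|\vsigma(\mp)-\vsigma(\mq)|$, and the singular-value Lidskii--Mirsky inequality cannot be invoked in the way you describe. What is needed is a Lidskii-type statement adapted to the $k$-dependent shifts themselves --- in the Hermitian case this is precisely the nontrivial spectral-spread Lidskii inequality of Massey et al., whose proof does not follow from a fixed-shift reduction --- and for general square matrices an additional idea is required that neither your proposal nor the paper supplies. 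As it stands, the crux of item (1) remains open in your write-up, which you acknowledge; the attempt should therefore be regarded as incomplete rather than merely differently routed.
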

Now let us look at some non-trivial inequalities. The first one is about the pinching of matrices (see e.g., Problem II.5.5 in ~\cite{bhatia2013matrix} for the definition). A similar inequality holds for singular values, see~\cite{bhatia2008commutators}.
\begin{proposition}
  \label{prop:pinching}
Let $\ma$ be any square matrix and $\mathcal{C}(\ma)$ denote a pinching, and $\mathcal{L}(\ma)=A-\mathcal{C}(\ma)$ an anti-pinching of this matrix. Then, we have
\begin{align*}
  \vdelta\big(\mathcal{C}(\ma)\big) &\prec_{w} \vdelta(\ma)\\
  \vdelta\big(\mathcal{L}(\ma)\big) &\prec_{w} \vdelta(\ma).
\end{align*}
\end{proposition}
\begin{proof}
We only prove the first inequality; the second one is similar. Since $\mathcal{C}(\ma)$ is a pinching, there exists a unitary $\mU$ (see~\cite{bhatia2008commutators}) such that $\mathcal{C}(\ma)=\frac{1}{2}(\ma+\mU\ma\mU^*)$. Thus,
\begin{equation}
    \begin{split}
    \sum_{i=1}^k \delta_i\big(\mathcal{C}(\ma)\big) & = \frac{1}{2}\sum_{i=1}^k\delta_i\big(\ma+\mU\ma\mU^*\big) \leq \sum_{i=1}^k\delta_i(\ma).
    \end{split}
\end{equation}
\end{proof}
% \begin{proposition}
% \label{prop:antipinching}
% Let $\ma$ be a square matrix and $\mathcal{L}(\ma)$ denote the anti-pinching of this matrix; namely $\mathcal{L}(\ma)=A-\mathcal{C}(\ma)$, then we have
% \[
%     \vdelta\big(\mathcal{L}(\ma)\big) \prec_{w} \vdelta(\ma)
% \]
% \end{proposition}
% \begin{proof}
% Noting that $\mathcal{C}(\ma)=\frac{1}{2}(\ma+\mU\ma\mU^*)$ for some unitary matrix $\mU$, we have
% \begin{equation}
%     \begin{split}
%     \sum_{i=1}^k \delta_i\big(\mathcal{L}(\ma)\big) & = \frac{1}{2}\sum_{i=1}^k\delta_i\big(\ma-\mU\ma\mU^*\big) \leq \sum_{i=1}^k\delta_i(\ma).
%     \end{split}
% \end{equation}
% \end{proof}

\begin{corollary}
\label{corr:block}
For any $n \times n$ matrices $\ma_1, \ma_2, \ma_3, \ma_4$, we have
\[
    \vsigma \left(\begin{bmatrix}
    0 & \ma_2 \\
    \ma_1 & 0
    \end{bmatrix}\right) = \vdelta\left(\begin{bmatrix}
    0 & \ma_2 \\
    \ma_1 & 0
    \end{bmatrix}\right) \prec_w \vdelta\left(\begin{bmatrix}
    \ma_3 & \ma_2 \\
    \ma_1 & \ma_4
    \end{bmatrix}\right).
\]
\end{corollary}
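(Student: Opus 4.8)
The plan is to split the statement into its two independent parts, the equality $\vsigma(M)=\vdelta(M)$ for the block-off-diagonal matrix $M=\begin{bmatrix}0 & \ma_2\\ \ma_1 & 0\end{bmatrix}$ and the weak majorization $\vdelta(M)\prec_w \vdelta(\ma)$ with $\ma=\begin{bmatrix}\ma_3 & \ma_2\\ \ma_1 & \ma_4\end{bmatrix}$, and to obtain each one directly from a result already established above. Neither part requires a fresh idea; the content is in recognizing the right specialization.

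For the majorization, I would observe that $M$ is exactly the anti-pinching of $\ma$ with respect to the natural $2\times 2$ block partition. Taking the unitary $\mU=\begin{bmatrix}\mi_n & 0\\ 0 & -\mi_n\end{bmatrix}$ gives $\mU\ma\mU^*=\begin{bmatrix}\ma_3 & -\ma_2\\ -\ma_1 & \ma_4\end{bmatrix}$, so $\mathcal{C}(\ma)=\tfrac12(\ma+\mU\ma\mU^*)=\begin{bmatrix}\ma_3 & 0\\ 0 & \ma_4\end{bmatrix}$ and hence $\mathcal{L}(\ma)=\ma-\mathcal{C}(\ma)=M$. Proposition~\ref{prop:antipinching} then yields $\vdelta(M)=\vdelta(\mathcal{L}(\ma))\prec_w\vdelta(\ma)$ at once.

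For the equality, I would appeal to Proposition~\ref{singeqdisc}: $\vsigma(M)=\vdelta(M)$ holds iff $M$ admits a singular value decomposition whose paired left and right singular vectors are mutually orthogonal, i.e.\ $\diag(\mW^*\mq)=0$ for $M=\mW\Sigma\mq^*$. Such an SVD I would build from the SVDs of the blocks. Since $M^*M=\begin{bmatrix}\ma_1^*\ma_1 & 0\\ 0 & \ma_2^*\ma_2\end{bmatrix}$, every singular value of $M$ comes from $\ma_1$ or $\ma_2$. If $\ma_1\vv=\sigma\vu$, then $M\begin{bmatrix}\vv\\ 0\end{bmatrix}=\sigma\begin{bmatrix}0\\ \vu\end{bmatrix}$, so the right singular vector sits in the top block and the left one in the bottom block and the two are orthogonal; symmetrically, $\ma_2\vv'=\tau\vu'$ gives $M\begin{bmatrix}0\\ \vv'\end{bmatrix}=\tau\begin{bmatrix}\vu'\\ 0\end{bmatrix}$, again an orthogonal pair. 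Stacking these pairs produces an SVD of $M$ with $\diag(\mW^*\mq)=0$, and Proposition~\ref{singeqdisc} delivers the equality.

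I do not expect a real obstacle, as both halves collapse to invoking earlier propositions; the only care needed is the routine bookkeeping that the stacked left and right singular bases are genuinely orthonormal and that they exhaust all of $M$'s singular directions via the block decomposition of $M^*M$. The one mild subtlety is the presence of repeated or zero singular values, but this is handled exactly as in Proposition~\ref{singeqdisc}: we only need \emph{some} valid SVD with the orthogonality property, not every one, and the construction above supplies it.
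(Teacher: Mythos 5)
Your proposal is correct and follows essentially the same route as the paper: the equality comes from assembling an SVD of the block anti-diagonal matrix out of the SVDs of $\ma_1$ and $\ma_2$ (your vector-wise pairing is exactly the paper's block factorization $\mq\,\Sigma\,\mr^*$ with $\diag(\mq^*\mr)=0$) and invoking Proposition~\ref{singeqdisc}, while the weak majorization is Proposition~\ref{prop:antipinching} applied to the block anti-pinching. Your only addition is making explicit the pinching unitary $\mU=\begin{bmatrix}\mi_n & 0\\ 0 & -\mi_n\end{bmatrix}$, which the paper leaves implicit.
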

\begin{proof}
Assume that $\ma_1$ and $\ma_2$ have the singular value decompositions $\ma_1=\mU_1\msigma_1\mv_1^*$ and $\ma_2=\mU_2\msigma_2\mv_2^*$, then we have
\[
    \begin{bmatrix}
    0 & \ma_2 \\
    \ma_1 & 0
    \end{bmatrix} = \underbrace{\begin{bmatrix}
    0 & \mU_2 \\
    \mU_1 & 0
    \end{bmatrix}}_{\mq}
    \begin{bmatrix}
    \msigma_1 & 0 \\
    0 & \msigma_2 
    \end{bmatrix}
    {\underbrace{\begin{bmatrix}
    \mv_1 & 0 \\
    0 & \mv_2
    \end{bmatrix}}_{\mr}}^*
\]
Note that $\mq$ and $\mr$ are unitary matrices and $\Diag(\mq^*\mr)=0$; therefore by Proposition~\ref{singeqdisc} we have the equality. The weak majorization follows by Proposition~\ref{prop:pinching}.
\end{proof}

\begin{remark}
  One can recover the non-commutative AM-GM inequality~\cite{bhatia1990singular} from Corollary~\ref{corr:block}. This corollary implies that for any square matrices $\ma$ and $\mb$,
  \begin{equation*}
    \vsigma \left(\begin{bmatrix}
    0 & \ma\mb^* \\
    \mb\ma^* & 0
    \end{bmatrix}\right) \prec_w \vdelta\left(\begin{bmatrix}
    \ma & 0 \\
    \mb & 0
    \end{bmatrix} \begin{bmatrix}
    \ma^* & \mb^* \\
    0 & 0
    \end{bmatrix}\right).
  \end{equation*}
  Using the fact that for $2n \times 2n$ positive definite matrix $\mx$ we have $2\delta_{2i-1}(\mx) \leq \sigma_i(\mx)$, for $i=1,\dots, n$, we get the inequality
  \begin{equation*}
    2\sigma_{2i-1} \left(\begin{bmatrix}
    0 & \ma\mb^* \\
    \mb\ma^* & 0
    \end{bmatrix}\right) \leq \sigma_i\left(\begin{bmatrix}
    \ma^*\ma+\mb^*\mb & 0 \\
    0 & 0
    \end{bmatrix}\right),
  \end{equation*}
  which implies that for $i=1,\ldots,n$,
  \[
  2\sigma_i(\ma\mb^*) \leq \sigma_i(\ma^*\ma+\mb^*\mb).
  \]
\end{remark}

Corollary~\ref{corr:block} improves and generalizes Inequality (8) in~\cite{massey2021norm} that studies spectral spread. Moreover, it also implies that
\begin{corollary}
\label{corl:perp}
For an isometry $\ms \in \C^{n\times k}$ and arbitrary matrix $\mx$ we have
\[
    \vsigma(\ms^*\mx\ms_{\perp}) \prec_w \restr{\vdelta(\mx)}{\min{(k,n-k)}}\,,
\]
where $\ms_{\perp}$ is an isometry whose range is orthogonal to the range of $\ms$.
\end{corollary}  
Using Corollary~\ref{corl:perp} we can provide an easy proof to a known upper-bound on the principal angles $\Theta (\mathcal{S}, \mathcal{T})$ between two subspaces $\mathcal{S}$ and $\mathcal{T}$. For a detailed discussion regarding the principal angles between subspaces, we refer the readers to~\cite{massey2021spectral}, in which this upper bound was first discovered.
 
 \begin{theorem} [Theorem 2.15 in~\cite{massey2021spectral}]
 \label{Thm:principalAngle}
 Given the Hermitian matrix $\mx$, the principal angles between k dimensional subspaces $\mathcal{S} \subset \mathbb{C}^n$ and $\mathcal{T}=e^{i\mx} \mathcal{S} \subset \mathbb{C}^n$ satisfy
 \[
    \Theta (\mathcal{S}, \mathcal{T}) \prec_w \restr{\vdelta(\mx)}{\min{(k,n-k)}},
 \]
 where $\restr{\vdelta(\mx)}{k}$ denotes a $k$ dimensional vector with the first $k$ entries of $\vdelta(\mx)$.
 \end{theorem}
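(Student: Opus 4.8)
The plan is to realize $\mathcal{T}=e^{i\mx}\mathcal{S}$ as the endpoint of the smooth path $\mathcal{S}_t:=e^{it\mx}\mathcal{S}$, $t\in[0,1]$, inside the Grassmannian, to bound the angle vector $\Theta(\mathcal{S},\mathcal{T})$ by the ``length'' of this path, and finally to identify that length with $\vsigma(\ms_{\perp}^*\mx\ms)$, which Corollary~\ref{corl:perp} already controls. Fix an isometry $\ms\in\mm_{n,k}(\mathbb{C})$ whose columns form an orthonormal basis of $\mathcal{S}$, and let $\ms_{\perp}\in\mm_{n,n-k}(\mathbb{C})$ be an isometry onto $\mathcal{S}^{\perp}$. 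I would start from the standard CS-decomposition fact that the sines of the principal angles between two $k$-dimensional subspaces are the singular values of the cross block of one orthonormal basis against the orthogonal complement of the other; in particular $\sin\theta_i(\mathcal{S},e^{i\mx}\mathcal{S})=\sigma_i(\ms_{\perp}^*e^{i\mx}\ms)$. The point of the path is that for an infinitesimal rotation this cross block is, to first order, the off-diagonal block $\ms_{\perp}^*\mx\ms$.

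First I would discretize $[0,1]$ into $m$ equal pieces and set $\mathcal{S}_j:=\mathcal{S}_{j/m}$. The key structural input is the triangle inequality for principal angles in its strongest, Ky-Fan form: for any triple of $k$-dimensional subspaces and every $l$, the sum of the $l$ largest angles satisfies $\sum_{i=1}^l\theta_i^{\downarrow}(\mathcal{S},\mathcal{T})\le\sum_{i=1}^l\theta_i^{\downarrow}(\mathcal{S},\mathcal{R})+\sum_{i=1}^l\theta_i^{\downarrow}(\mathcal{R},\mathcal{T})$; equivalently, each Ky-Fan sum of angles is a metric on the Grassmannian. This is the exact analogue of the fact that $\|\cdot\|_{(k)}^{\delta}$ is a genuine (sub-additive) norm, which powered the telescoping in the lemma $\vdelta(e^{i\mx})\prec_w\vsigma(\mx)$. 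Granting it, I iterate the scalar inequality over the $m$ segments to get, for each fixed $l$, $\sum_{i=1}^l\theta_i^{\downarrow}(\mathcal{S},\mathcal{T})\le\sum_{j=0}^{m-1}\sum_{i=1}^l\theta_i^{\downarrow}(\mathcal{S}_j,\mathcal{S}_{j+1})$; and since every $\Theta(\mathcal{S}_j,\mathcal{S}_{j+1})$ is already sorted, so is their sum, whence the right-hand side equals $\sum_{i=1}^l\big[\sum_{j}\Theta(\mathcal{S}_j,\mathcal{S}_{j+1})\big]_i^{\downarrow}$. Reassembling over $l$ yields the telescoped weak majorization $\Theta(\mathcal{S},\mathcal{T})\prec_w\sum_{j=0}^{m-1}\Theta(\mathcal{S}_j,\mathcal{S}_{j+1})$ valid for every $m$.

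Next I would evaluate the right-hand side as $m\to\infty$. Because $\mathcal{S}_{j+1}=e^{i\mx/m}\mathcal{S}_j$ and $e^{ij\mx/m}\ms_{\perp}$ is an orthonormal basis of $\mathcal{S}_j^{\perp}$, the relevant cross block is $(e^{ij\mx/m}\ms_{\perp})^*e^{i\mx/m}(e^{ij\mx/m}\ms)=\ms_{\perp}^*e^{i\mx/m}\ms$, which is independent of $j$ since all these exponentials commute. Expanding $e^{i\mx/m}=\mi_n+\tfrac{i}{m}\mx+O(m^{-2})$ and using $\ms_{\perp}^*\ms=0$ gives $\ms_{\perp}^*e^{i\mx/m}\ms=\tfrac{i}{m}\ms_{\perp}^*\mx\ms+O(m^{-2})$, so each consecutive angle vector equals $\tfrac1m\vsigma(\ms_{\perp}^*\mx\ms)+O(m^{-2})$ (the $\arcsin$ correction being of even higher order). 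Summing the $m$ terms that agree to leading order, $\sum_{j=0}^{m-1}\Theta(\mathcal{S}_j,\mathcal{S}_{j+1})\to\vsigma(\ms_{\perp}^*\mx\ms)$, and since weak majorization is preserved under this (sorted) limit I conclude $\Theta(\mathcal{S},\mathcal{T})\prec_w\vsigma(\ms_{\perp}^*\mx\ms)$.

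Finally, because $\mx$ is Hermitian one has $\vsigma(\ms_{\perp}^*\mx\ms)=\vsigma(\ms^*\mx\ms_{\perp})$, and Corollary~\ref{corl:perp} gives $\vsigma(\ms^*\mx\ms_{\perp})\prec_w\restr{\vdelta(\mx)}{\min(k,n-k)}$; chaining the two weak majorizations (transitivity) yields the claim, after noting that at most $\min(k,n-k)$ principal angles are nonzero, so both sides are genuinely $\min(k,n-k)$-dimensional. The main obstacle is the triangle inequality for the Ky-Fan sums of principal angles: this metric property is what legitimizes the telescoping and is the one ingredient I would need to cite (or establish separately), rather than derive from the results already in the paper. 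The remaining work is routine book-keeping of the first-order expansion and the interchange of the limit with weak majorization.
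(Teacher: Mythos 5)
Your proof is correct and follows essentially the same route as the paper's: the path $\mathcal{T}(t)=e^{it\mx}\mathcal{S}$, the weak-majorization (Ky-Fan) triangle inequality for principal angles, telescoping over $m$ equal steps, passing to the limit to recover $\vsigma(\ms^*\mx\ms_{\perp})$, and finishing with Corollary~\ref{corl:perp}. The only cosmetic differences are that you justify the limit by a first-order Taylor expansion where the paper invokes L'H\^opital's rule, and you spell out the commutation argument showing each consecutive-angle vector is the same; like the paper, you cite rather than prove the majorization triangle inequality for principal angles, which is the one external ingredient in both arguments.
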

 \begin{proof}
 Define $\mathcal{T}(t)=e^{it\mx}\mathcal{S}$. We have $\mathcal{T}(0) = \mathcal{S}$ and $\mathcal{T}(1) = \mathcal{T}$. By the triangle inequality for principal angles we have
 \begin{equation}
     \begin{split}
          \Theta(\mathcal{S}, \mathcal{T}) & \prec_w \sum_{j=0}^{m-1} \Theta\big(\mathcal{T}(\tfrac{j}{m}),\mathcal{T}(\tfrac{j+1}{m})\big) = \sum_{j=0}^{m-1} \arcsin\Big(\vsigma(\ms^* e^{\tfrac{i\mx}{m}}\ms_{\perp})\Big) \\
          & = m \arcsin\Big(\vsigma(\ms^* e^{\tfrac{i\mx}{m}}\ms_{\perp})\Big) \\
          & \myto \vsigma(\ms^* \mx\ms_{\perp})\\ &\prec_w \restr{\vdelta(\mx)}{\min{(k,n-k)}},
     \end{split}
 \end{equation}
where the last majorization follows by Corollary~\ref{corl:perp}. The limit can be justified by L'Hopital's rule and the continuity of singular values.
 \end{proof}
 
Finally, we propose two inequalities that shed further light on the relationship between  discrepancy values and singular values.
\begin{lemma}
For the $n \times n$ Hermitian matrix $\mx$ we have the majorization
\[
    \vdelta(e^{i\mx})\prec_w \vsigma(\mx).
\]
\end{lemma}
\begin{proof}
We first show that
%\begin{equation}
%\label{ineq:proofHerm}
    $\vdelta(e^{i\mx})\prec_w \int_{0}^{1} \vdelta(i\mx e^{i t\mx}) dt$. 
%\end{equation}
Then, using the fact that $\vdelta(ie^{i t\mx}\mx)\prec_w \vsigma(\mx)$ we have
$\vdelta(e^{i\mx})\prec_w \int_{0}^{1} \vsigma(\mx) dt = \vsigma(\mx)$. To prove the former inequality, first define $f(t) = e^{it\mx}$. Then, consider the series
\[
    e^{i\mx} = \mi_n - \nlsum_{j=1}^{m-1} f\left(\tfrac{j}{m}\right) - f\left(\tfrac{j+1}{m}\right).
\]
Consequently, upon applying $\vdelta(\cdot)$ to both sides, we obtain
\[
    \vdelta(e^{i\mx}) \prec_w \nlsum_{j=1}^{m-1} \vdelta\left(f\left(\tfrac{j}{m}\right) - f\left(\tfrac{j+1}{m}\right)\right).
\]
If we let $m$ goes to infinity the RHS would converge to $\int_{0}^{1} \vdelta\left(\tfrac{d}{dt} f(t)\right) dt$. Finally, use the fact that $df(t)/dt = i \mx e^{it\mx}$.
\end{proof}

\begin{corollary}
For any square matrix $\ma$, let $|\ma|=(\ma^*\ma)^{1/2}$. Then, we have
\[
    \vdelta(e^{i|\ma|})\prec_w \vsigma(\ma).
\]
%where $|\ma| = $.
\end{corollary}
We also have another similar relationship between the discrepancy of the absolute values of a matrix and the discrepancy of the matrix.
For any matrix $\ma$,
\begin{equation}\label{eq:6}
  \vdelta(|\ma|) = \frac{|\vsigma^{\downarrow}(\ma)-\vsigma^{\uparrow}(\ma)|^{\downarrow}}{2} \prec_w \vdelta(\ma).
\end{equation}
To prove inequality~\eqref{eq:6}, we need the following statement:
\begin{proposition}
Let $\mP$ be Hermitian positive definite, $\mq$ be unitary. Then, 
\[
    \vdelta(\mP) \prec_w \vdelta(\mq \mP).
\]
\end{proposition}
\begin{proof}
It suffices to show that for any diagonal matrix with non-negative entries $\mLambda$ and unitary matrix $\mq$
we have 
\[
    \vdelta(\mLambda) \prec_w \vdelta(\mq \mLambda).
\]
Note that $\vsigma(\mLambda)-|\alpha| \vsigma(\mq) \prec_w \vsigma(\mLambda - \alpha \mq)$; hence for any unitary matrix $\mq$, we have $\min_{\alpha \in \mathbb{R}^+} \vertiii{\mLambda - \alpha \mi} \leq \min_{\alpha \in \mathbb{C}} \vertiii{\mLambda - \alpha \mq}$, which implies the desired inequality.
\end{proof}

\subsection{Discrepancy values of direct sums}
\label{sec:directsum}
Below, we briefly discuss discrepancy values of the direct sum of matrices. These results would particularly become useful when comparing the results for discrepancy values with ones for the spread of Hermitian matrices.
Inspired by the minimal representation of the discrepancy seminorm, we introduce for $\ma,\mb \in \C^{n\times n}$, the generalization
\begin{equation}
    \sum_{i=1}^k \delta_{i}(\ma,\mb) := \min_{\alpha\in\mathbb{C}}\;\frac{\|\ma-\alpha\mi_n\|_{(k)}+\|\mb-\alpha\mi_n\|_{(k)}}{2}.
\end{equation}
Note that $\vdelta(\ma, \mq^*\ma\mq)=\vdelta(\ma)$, for any unitary matrix $\mq$, and $2\vdelta(\ma,0)=\vsigma(\ma)$.

\begin{proposition}
  \label{prop:deltaab}
  Let $\ma, \mb \in \C^{n\times n}$. The following majorization holds:
  \begin{equation}
    \label{eq:8}
    (\vdelta(\ma,\mb), \vdelta(\ma,\mb)) \prec_w \vdelta(\ma\oplus \mb).
  \end{equation}
\end{proposition}
\begin{proof}
First note that for any $\vx,\vy \in \mathbb{R}^n$, we have
\[
    \Big(\frac{\vx^{\downarrow}+\vy^{\downarrow}}{2}, \frac{\vx^{\downarrow}+\vy^{\downarrow}}{2}\Big) \prec_w (\vx, \vy)^{\downarrow}.
\]
Then for any scalar $\alpha$, we have
\[
    \Big(\frac{\vsigma(\ma -\alpha\mi_n)^{\downarrow}+\vsigma(\mb -\alpha\mi_n)^{\downarrow}}{2}, \frac{\vsigma(\ma -\alpha\mi_n)^{\downarrow}+\vsigma(\mb -\alpha\mi_n)^{\downarrow}}{2}\Big) \prec_w \vsigma(\ma\oplus\mb -\alpha\mi_n)^{\downarrow}.
\]
The result follows by minimizing the RHS of the above inequality w.r.t. $\alpha$.
\end{proof}

For the sequel, we need to introduce an averaging operator on vectors. Assume that $\vx \in \mathbb{R}^{kn}$, where $k$ and $n$ are some positive integers. Then, define $\mu_{k}(\vx) \in \mathbb{R}^n$ by
\begin{equation}
    \mu_{k}(\vx) := \begin{bmatrix} \frac{\vx_1+\dots+\vx_k}{k} \\ \frac{\vx_{k+1}+\dots+\vx_{2k}}{k} \\ \vdots \\ \frac{\vx_{n-k+1}+\dots+\vx_{n}}{k} \end{bmatrix}.
\end{equation}
Using the operator $\mu_k$ we can restate \eqref{eq:8} as $\vdelta(\ma,\mb) \prec_w \mu_2(\vdelta(\ma\oplus \mb))$, or restate the connection between the spectral spread of a Hermitian matrix $\ma \in \mm_n$ with even $n$ and discrepancy values as $\mu_2(\vdelta(\ma))=\text{spr}^+(\ma)/2$. Therefore, for Hermitian matrices $\ma$ and $\mb$, we have $\vdelta(\ma,\mb) \prec_w \text{spr}^+(\ma\oplus \mb)/2$. 
%The following inequality is also evident:
% \begin{proposition}
% Let $\ma$ be Hermitian positive definite. Then, 
% \[
%     \mu_2(\vdelta(\ma)) \leq \mu_2(\vsigma(\ma)).
% \]
% \end{proposition}
The averaging operator allows us to write discrepancy of direct sums of a matrix with itself via the discrepancy of the individual matrix. Indeed, we have the following:
\begin{lemma}
Let $\ma$ be $n \times n$ matrix. Then
\[
    \mu_k \big(\vdelta(\,\overbrace{\ma \oplus \cdots \oplus \ma}^{k}\,)\big) = \vdelta(\ma).
\]
\end{lemma}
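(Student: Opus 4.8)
The plan is to reduce everything to the minimal (dual) representation of the discrepancy norms established in Theorem~\ref{lemma_min} and to exploit the fact that a $k$-fold direct sum carries each singular value with uniform multiplicity $k$. Write $\ma^{\oplus k}$ for $\ma \oplus \cdots \oplus \ma$ ($k$ copies), a $kn \times kn$ matrix. The first observation is that the scalar shift commutes with the direct sum: $\ma^{\oplus k} - \alpha \mi_{kn} = (\ma - \alpha\mi_n)^{\oplus k}$ for every $\alpha \in \mathbb{C}$, so the singular values of $\ma^{\oplus k} - \alpha\mi_{kn}$ are precisely $k$ copies of each entry of $\vsigma(\ma - \alpha\mi_n)$.

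Next I would prove the key identity $\|\ma^{\oplus k} - \alpha\mi_{kn}\|_{(mk)} = k\,\|\ma - \alpha\mi_n\|_{(m)}$ for every integer $0 \le m \le n$. This is exactly where the uniform multiplicity matters: when the decreasingly ordered list $\sigma_1 \ge \cdots \ge \sigma_n$ of $\ma - \alpha\mi_n$ is replaced by the same list with each value repeated $k$ times and then resorted, the top $mk$ terms are exactly $k$ copies of $\sigma_1, \dots, \sigma_m$, because the cut at index $mk$ falls on a block boundary. Summing produces the factor $k$. Minimizing over $\alpha$ and applying Theorem~\ref{lemma_min} to both sides then yields $\|\ma^{\oplus k}\|^\delta_{(mk)} = k\,\|\ma\|^\delta_{(m)}$; that is, the partial sums of $\vdelta(\ma^{\oplus k})$ taken at the indices that are multiples of $k$ are exactly $k$ times the corresponding partial sums of $\vdelta(\ma)$.

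Finally, I would unwind the definition of $\mu_k$. Its $m$-th coordinate is the average of the $k$ consecutive discrepancy values $\delta_{(m-1)k+1}(\ma^{\oplus k}), \dots, \delta_{mk}(\ma^{\oplus k})$, which equals the telescoping difference $\tfrac{1}{k}\big(\|\ma^{\oplus k}\|^\delta_{(mk)} - \|\ma^{\oplus k}\|^\delta_{((m-1)k)}\big)$. By the identity just established this is $\|\ma\|^\delta_{(m)} - \|\ma\|^\delta_{(m-1)} = \delta_m(\ma)$, so $\mu_k\big(\vdelta(\ma^{\oplus k})\big) = \vdelta(\ma)$ coordinatewise, as claimed.

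I expect the only genuine obstacle to be the clean \emph{block-boundary} step in the middle: one must verify that truncating the resorted list of repeated singular values at an index that is a multiple of $k$ always lands precisely at the end of a full block, so that no partial block contributes and the Ky-Fan norm scales by exactly $k$. Once that is secured, the rest is bookkeeping with the minimal representation and the telescoping structure of the discrepancy values.
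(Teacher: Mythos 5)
Your proof is correct and takes essentially the same route as the paper: the paper's own argument also rests on the scaling identity $\|\ma \oplus \cdots \oplus \ma\|_{(km)} = k\,\|\ma\|_{(m)}$, commutes the scalar shift with the direct sum, and minimizes over $\alpha$ via the minimal representation of Theorem~\ref{lemma_min}. Your write-up merely makes explicit the block-boundary and telescoping details that the paper leaves implicit, so nothing further is needed.
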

\begin{proof}
% Since we have
% \[
%     \|\,\overbrace{\ma \oplus \cdots \oplus \ma}^{k}\,\|_{(km)} = k \|\ma\|_{(m)}, \qquad m=1, \dots, n,
% \]
Note that
\begin{equation*}
    \begin{split}
        \min_{\alpha\in\mathbb{C}} \|\,\overbrace{\ma \oplus \cdots \oplus \ma}^{k}\, -\alpha\mi_n\|_{(km)} &= \min_{\alpha\in\mathbb{C}} \|\,\overbrace{(\ma-\alpha\mi_n) \oplus \cdots \oplus (\ma-\alpha\mi_n)}^{k}\,\|_{(km)} \\
        & = k\, \min_{\alpha\in\mathbb{C}} \|\ma -\alpha\mi_n\|_{(m)},
    \end{split}
\end{equation*}
for $m=1, \dots, n$. Hence we have the desired result.
\end{proof}
We can similarly prove this lemma if matrices $\ma$ are arbitrarily replaced by $\ma^*$.

%%% Local Variables:
%%% mode: latex
%%% TeX-master: "main"
%%% End:

\section{Application: bounding norms of commutators}\label{sec: application}
In this section, our main goal is to employ the tools developed so far to bound the Ky-Fan norms of  commutators. We first note an obvious invariance of the \emph{generalized commutator} $\ma\mx-\mx\mb$ under shifts: for any $\alpha\in \C$ and square matrices $\ma,\mb,\mx$, we have
\begin{equation}
    (\ma-\alpha\mi_n)\mx-\mx(\mb-\alpha\mi_n) = \ma\mx-\mx\mb.
    \label{eq:inv}
\end{equation}
An special case of this invariance is $[\ma,\mb] = [\ma, \mb - \alpha \mi_n]$, where $[\cdot , \cdot]$ denotes the usual commutator (Lie bracket). We will use invariance~\eqref{eq:inv} together with properties of discrepancy values to amplify existing majorization inequalities such as $\vsigma([\ma,\mb]) \prec_w 2\vsigma(\ma)\vsigma(\mb)$, and to achieve sharp bounds for similar objects. Our first result is:

\begin{lemma}
\label{lemma:pi2}
For square matrices $\ma, \mb$, and any partial isometry $\mx$ we have
\[
\vsigma (\ma\mx-\mx\mb ) \prec_w 2 \vdelta(\ma, \mb)\vsigma(\mx).
\]
\end{lemma}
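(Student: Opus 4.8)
The plan is to exploit the shift invariance~\ref{eq:inv} of the generalized commutator together with the two elementary product- and sum-majorizations for singular values recorded in Section~\ref{sec: preli}. The crucial observation is that the left-hand side $\vsigma(\ma\mx-\mx\mb)$ is completely independent of the shift parameter $\alpha$, whereas the definition of $\vdelta(\ma,\mb)$ is itself a minimization over $\alpha$. This lets me carry out the minimization \emph{separately for each partial sum}, which is exactly the amplification mechanism advertised at the start of this section.

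Write $r$ for the rank of the partial isometry $\mx$, so that $\vsigma(\mx)=\mathbf{1}_r$. First I would fix $\alpha\in\mathbb{C}$ and use the invariance to write $\ma\mx-\mx\mb=(\ma-\alpha\mi_n)\mx-\mx(\mb-\alpha\mi_n)$. Applying the subadditivity $\vsigma(\cdot+\cdot)\prec_w\vsigma(\cdot)+\vsigma(\cdot)$ gives, for every $k$,
\[
\sum_{i=1}^{k}\sigma_i(\ma\mx-\mx\mb)\ \le\ \|(\ma-\alpha\mi_n)\mx\|_{(k)}+\|\mx(\mb-\alpha\mi_n)\|_{(k)}.
\]
Next I would bound each term using the product majorization $\vsigma(\cdot\,\cdot)\prec_w\vsigma(\cdot)\vsigma(\cdot)$ from Section~\ref{sec: preli}: since the entrywise product of a decreasing vector with $\mathbf{1}_r$ simply truncates it after its $r$ largest entries, one gets $\|(\ma-\alpha\mi_n)\mx\|_{(k)}\le\|\ma-\alpha\mi_n\|_{(\min(k,r))}$, and analogously for $\mb$. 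Hence, for all $\alpha$ and all $k$,
\[
\sum_{i=1}^{k}\sigma_i(\ma\mx-\mx\mb)\ \le\ \|\ma-\alpha\mi_n\|_{(\min(k,r))}+\|\mb-\alpha\mi_n\|_{(\min(k,r))}.
\]

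Finally, because the left-hand side does not see $\alpha$, I would minimize the right-hand side over $\alpha$ for each fixed $k$ and invoke the minimal representation of $\sum_i\delta_i(\ma,\mb)$ to conclude $\sum_{i=1}^{k}\sigma_i(\ma\mx-\mx\mb)\le 2\sum_{i=1}^{\min(k,r)}\delta_i(\ma,\mb)$. As the $k$-th partial sum of $2\vdelta(\ma,\mb)\vsigma(\mx)$ equals precisely $2\sum_{i=1}^{\min(k,r)}\delta_i(\ma,\mb)$ (the product with $\mathbf{1}_r$ truncating once more at $r$), this is exactly the asserted weak majorization. I expect the only delicate point to be the bookkeeping for $k>r$: the commutator $\ma\mx-\mx\mb$ may have rank as large as $2r$, so its partial sums keep growing beyond $r$, yet the truncation forced by the partial isometry caps the right-hand side at its value at $k=r$, and it is precisely this cap that keeps the inequality valid in that regime. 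Everything else is a routine assembly of the two standard majorizations and the definition of $\vdelta(\ma,\mb)$.
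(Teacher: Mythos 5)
Your proof is correct and takes essentially the same route as the paper's: shift by $\alpha$ via the invariance~\ref{eq:inv}, bound the partial sums by the standard sum- and product-majorizations (which the paper compresses into the single inequality $\sum_{i=1}^k\sigma_i(\ma\mx-\mx\mb)\le\sum_{i=1}^k\big(\sigma_i(\ma-\alpha\mi_n)+\sigma_i(\mb-\alpha\mi_n)\big)\sigma_i(\mx)$), and then minimize over $\alpha$ separately for each fixed $k$, using $\vsigma(\mx)=\mathbf{1}_r$ to truncate at $\min(k,r)$. Your explicit treatment of the $k>r$ regime simply spells out the bookkeeping the paper leaves implicit in its remark that each $\sigma_i(\mx)$ is either $1$ or $0$.
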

\begin{proof}
Using invariance~\eqref{eq:inv}, we know that $\sum_{i=1}^k \sigma_i (\ma\mx-\mx\mb) \leq \sum_{i=1}^k (\sigma_i(\ma - \alpha \mi) + \sigma_i(\mb - \alpha \mi))\sigma_i(\mx)$ for any $\alpha\in \mathbb{C}$. Now we minimize the RHS for any $k=1,\dots, n$. Considering the fact that $\sigma_i(\mx)$ is either $1$ or $0$, we get the desired result.
\end{proof}
Next, we propose a decomposition that enables us to extend the above lemma.
\begin{lemma}
  \label{lem:decomp}
Any square matrix $\ma$ has the following decomposition
\begin{equation}
\label{eq:10}
\ma=\sum_{i=1}^n \alpha_i \mx_i,
\end{equation}
where $\mx_i$ is a rank-$i$ isometry; i.e., with $i$ non-zero singular values equal to one. And, $\sum_{i=1}^n \alpha_i=\sigma_1$, $\sum_{i=2}^n \alpha_i=\sigma_2, \dots, \alpha_n=\sigma_n$. %, where the singular values are ordered in a decreasing manner.
\end{lemma}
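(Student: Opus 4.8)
The plan is to reduce the statement to the singular value decomposition together with an Abel-summation (``layer cake'') rearrangement of the diagonal of singular values. First I would fix an SVD $\ma = \mU \Sigma \mv^*$ with $\mU,\mv \in \mathbf{U}(n)$ and $\Sigma = \Diag(\sigma_1,\dots,\sigma_n)$, where $\sigma_1 \geq \cdots \geq \sigma_n \geq 0$. The claimed coefficient constraints force $\alpha_k = \sigma_k - \sigma_{k+1}$ for $k=1,\dots,n-1$ and $\alpha_n = \sigma_n$; these are nonnegative exactly because the singular values are ordered decreasingly, and a telescoping check confirms $\sum_{i=k}^n \alpha_i = \sigma_k$ for each $k$, which is precisely the system of equalities in the statement.

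The key algebraic observation is that the diagonal matrix of singular values splits as
\[
\Sigma \;=\; \sum_{i=1}^n \alpha_i\, \Diag(\mathbf{1}_i),
\]
since the $(j,j)$ entry of the right-hand side equals $\sum_{i \geq j}\alpha_i = \sigma_j$ by the telescoping identity above. Conjugating by the singular vectors, I would define $\mx_i := \mU\, \Diag(\mathbf{1}_i)\, \mv^*$, which gives
\[
\ma \;=\; \mU\Sigma\mv^* \;=\; \sum_{i=1}^n \alpha_i\, \mU\,\Diag(\mathbf{1}_i)\,\mv^* \;=\; \sum_{i=1}^n \alpha_i\, \mx_i.
\]

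To finish, I would verify that each $\mx_i$ is a rank-$i$ partial isometry. Since $\mU$ and $\mv$ are unitary, $\vsigma(\mx_i) = \vsigma(\Diag(\mathbf{1}_i)) = \mathbf{1}_i$, so $\mx_i$ has exactly $i$ nonzero singular values, all equal to one; equivalently $\mx_i \in \mathcal{P}_i(n)$. This is exactly the desired property of the factors.

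There is essentially no real obstacle here: the construction is explicit, and the only care needed is in the book-keeping of the Abel summation, so that the nonnegative weights $\alpha_i$ realize the prescribed constraints and each $\Diag(\mathbf{1}_i)$ is the correct rank-$i$ diagonal projection. It is worth noting that this decomposition is tailored to feed the weights $\alpha_i = \sigma_i - \sigma_{i+1}$ into Lemma~\ref{lemma:pi2}, so as to extend that lemma from a single partial isometry to an arbitrary matrix by linearity and the weak-majorization triangle inequality.
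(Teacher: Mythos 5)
Your proposal is correct and is essentially the paper's own argument: both proofs apply Abel summation by parts to the singular value decomposition, with the same weights $\alpha_i=\sigma_i-\sigma_{i+1}$ and the same partial isometries (your $\mx_i=\mU\,\mathrm{Diag}(\mathbf{1}_i)\,\mv^*$ coincides with the paper's $\vx_1\vy_1^*+\dots+\vx_i\vy_i^*$). Your write-up merely makes explicit the verifications (nonnegativity of the $\alpha_i$, the telescoping identities, and the unitary-invariance argument for $\vsigma(\mx_i)=\mathbf{1}_i$) that the paper leaves implicit.
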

\begin{proof}
Consider the singular value decomposition $\ma=\sum_{i=1}^n \sigma_i \vx_i\vy_i^*$, and then use  summation by parts. Thus, we can rewrite $\ma$ as follows:
\[
\ma=(\sigma_1-\sigma_2)\vx_1\vy_1^*+(\sigma_2-\sigma_3)(\vx_1\vy_1^*+\vx_2\vy_2^*)+\dots+\sigma_n(\vx_1\vy_1^*+\dots+\vx_n\vy_n^*).
\]
Now let $\alpha_i=\sigma_i-\sigma_{i+1}$ and $\mx_i= \vx_1\vy_1^*+\dots+\vx_i\vy_i^*$, completing the proof.
\end{proof}
Observe that if matrix $\ma$ in Lemma~\ref{lem:decomp} is positive definite, then each $\mx_i$ is an orthogonal projection. We are ready to now present the first main result of this section.
\begin{theorem}
  \label{thm:commuteMain}
  For $n \times n$ square matrices $\ma$, $\mb$, and $\mx$ we have
\begin{equation}
\label{ineq:main}
    \vsigma (\ma\mx-\mx\mb) \prec_w 2 \vdelta(\ma,\mb)\vsigma(\mx).
\end{equation}
\end{theorem}
\begin{proof}
Using Lemma~\ref{lem:decomp} to obtain the decomposition $\mx=\sum_{i=1}^n \alpha_i \mx_i$, we have
\begin{equation}
    \begin{split}
        \vsigma(\ma\mx-\mx\mb)&=\vsigma\Big( \nlsum_{i=1}^n \alpha_i ( \ma\mx_i-\mx_i\mb) \Big) \\
        &\prec_w \nlsum_{i=1}^n \alpha_i \vsigma ( \ma\mx_i-\mx_i\mb) \\
        &\prec_w 2\vdelta(\ma, \mb)\sum_{i=1}^n \alpha_i \vsigma(\mx_i) \\
        &= 2\vdelta(\ma, \mb)\vsigma(\mx),
    \end{split}
\end{equation}
where the first majorization follows upon noting that $\alpha_i\geq 0$, and the second majorization from Lemma~\ref{lemma:pi2}.
\end{proof}

Theorem~\ref{thm:commuteMain} provides an extension to a related inequality for the spectral spread proved in~\cite[inequality (11)]{massey2021norm}; using $\vdelta(\ma, \ma)=\vdelta(\ma)$, we have the following:
\begin{corollary}
\label{coroll:zero1}
For the $n \times n$ square matrices $\ma$ and $\mb$ we have
\begin{equation}
  \label{eq:9}
    \vsigma \big([\mb,\ma]\big) \prec_w 2 \vdelta(\mb)\vsigma(\ma).
\end{equation}
\end{corollary}
\begin{remark}
\label{numb}
If either $\ma$ or $\mb$ belongs to $\Psi_k(n)$ (i.e., it only has discrepancy values zero or one) then we can tighten~\eqref{eq:9} to obtain $\vsigma ([\mb,\ma]) \prec_w 2 \vdelta(\mb)\vdelta(\ma)$. % by minimizing the RHS of the above inequalities.
\end{remark}
%Let us continue with inequalities for the generalized commutators.
\begin{corollary}
For $n \times n$ square matrices $\ma$ and $\mb$, we have the inequalities
\begin{align*}
    \vsigma (\ma\mx-\mx\mb) &\prec_w 2 \mu_2\big(\vdelta(\ma \oplus \mb)\big)\vsigma(\mx),\\
    \vsigma (\ma-\mb)       &\prec_w 2 \vdelta(\ma,\mb) \prec_w 2\, \mu_2\big(\vdelta(\ma \oplus \mb)\big).
\end{align*}
\end{corollary}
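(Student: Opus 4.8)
The plan is to assemble both claims directly from results already established in this section, with no new heavy machinery. The three ingredients I would use are the main commutator inequality~\eqref{ineq:main}, the direct-sum proposition giving $\vdelta(\ma,\mb)\prec_w\mu_2(\vdelta(\ma\oplus\mb))$, and the elementary fact that entrywise (Hadamard) multiplication by a fixed non-negative, non-increasing vector preserves weak majorization.

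For the first inequality, I would start from~\eqref{ineq:main}, namely $\vsigma(\ma\mx-\mx\mb)\prec_w 2\vdelta(\ma,\mb)\vsigma(\mx)$, so that it only remains to replace $\vdelta(\ma,\mb)$ by $\mu_2(\vdelta(\ma\oplus\mb))$ on the right. Both of these vectors are non-negative and arranged in non-increasing order: the latter because averaging consecutive pairs of the decreasing vector $\vdelta(\ma\oplus\mb)$ is again decreasing, since $x_{2j-1}\ge x_{2j+1}$ and $x_{2j}\ge x_{2j+2}$. The direct-sum proposition supplies $\vdelta(\ma,\mb)\prec_w\mu_2(\vdelta(\ma\oplus\mb))$, and I would then invoke the product lemma: if $\va\prec_w\vb$ with $\va,\vb$ non-negative and non-increasing, and $\vc$ is non-negative and non-increasing, then $\va\vc\prec_w\vb\vc$ entrywise. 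This follows by Abel summation—writing $A_j=\sum_{i\le j}a_i$ and $B_j=\sum_{i\le j}b_i$, one has $\sum_{i=1}^k a_ic_i=\sum_{i=1}^{k-1}A_i(c_i-c_{i+1})+A_kc_k\le\sum_{i=1}^{k-1}B_i(c_i-c_{i+1})+B_kc_k=\sum_{i=1}^k b_ic_i$, using $c_i-c_{i+1}\ge 0$ and $A_i\le B_i$. Applying this with $\vc=\vsigma(\mx)$ gives $2\vdelta(\ma,\mb)\vsigma(\mx)\prec_w 2\mu_2(\vdelta(\ma\oplus\mb))\vsigma(\mx)$, and transitivity of $\prec_w$ closes the first claim.

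For the second chain, the right-hand inequality $2\vdelta(\ma,\mb)\prec_w 2\mu_2(\vdelta(\ma\oplus\mb))$ is just the direct-sum proposition scaled by $2$. For the left-hand inequality the cleanest route is to specialize~\eqref{ineq:main} to $\mx=\mi_n$: then $\ma\mx-\mx\mb=\ma-\mb$ and $\vsigma(\mi_n)=\mathbf{1}_n$ is the all-ones vector, so entrywise multiplication by $\vsigma(\mi_n)$ is the identity and~\eqref{ineq:main} reduces to $\vsigma(\ma-\mb)\prec_w 2\vdelta(\ma,\mb)$. Equivalently, one can argue straight from the minimal representation: for every $\alpha$ the triangle inequality for Ky-Fan norms gives $\|\ma-\mb\|_{(k)}\le\|\ma-\alpha\mi_n\|_{(k)}+\|\mb-\alpha\mi_n\|_{(k)}$, and minimizing the right-hand side over $\alpha$ yields exactly $2\sum_{i=1}^k\delta_i(\ma,\mb)$ by the defining formula for $\vdelta(\ma,\mb)$.

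I expect no genuine obstacle, since the corollary is essentially a bookkeeping assembly of prior statements. The one place that requires care is the product-majorization step in the first inequality: one must confirm that both $\mu_2(\vdelta(\ma\oplus\mb))$ and $\vsigma(\mx)$ are non-negative and non-increasing, so that their Hadamard product is itself non-increasing and the Abel-summation argument applies verbatim. Beyond this, the only thing to verify is that the entrywise-product convention and the decreasing-order convention used here match those in~\eqref{ineq:main} and in the direct-sum proposition, which they do.
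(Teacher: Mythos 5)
Your proposal is correct and follows exactly the route the paper intends: the corollary is stated without proof precisely because it is the immediate combination of the theorem $\vsigma(\ma\mx-\mx\mb)\prec_w 2\vdelta(\ma,\mb)\vsigma(\mx)$ with the direct-sum proposition $\vdelta(\ma,\mb)\prec_w\mu_2\big(\vdelta(\ma\oplus\mb)\big)$, and your Abel-summation product lemma (together with the specialization $\mx=\mi_n$, or equivalently the triangle-inequality argument from the minimal representation) supplies exactly the bookkeeping the paper leaves implicit. The only detail you assert without proof --- that $\vdelta(\ma,\mb)$ is itself nonincreasing --- is true and easy: for each fixed $\alpha$ the map $k\mapsto\tfrac{1}{2}\big(\|\ma-\alpha\mi_n\|_{(k)}+\|\mb-\alpha\mi_n\|_{(k)}\big)$ has nonincreasing increments, and a pointwise minimum of such concave-in-$k$ functions again has nonincreasing increments.
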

If $\ma$ and $\mb$ are Hermitian then $2\, \mu_2(\vdelta(\ma \oplus \mb)) = \text{Spr}^+(\ma \oplus \mb)$, and the resulting inequality has been previously shown for spectral spread~\cite[inequality (9)]{massey2021norm}. 
If $\ma$ and $\mb$ are positive definite, then considering the fact that $\delta_{2i-1}(\ma \oplus \mb) \leq \sigma_i(\ma \oplus \mb)$, the above corollary implies that $\sigma_i (\ma\mx-\mx\mb) \leq \sigma_i(\ma \oplus \mb)\sigma_i(\mx)$, for $i=1, \dots, n$.
Moreover, for Hermitian matrices, decomposition~\eqref{eq:10} can be refined as follows.

\begin{lemma}
  \label{lem:herdcomp}
  Any Hermitian matrix $\ma$ has the decomposition
  \begin{equation}
    \label{eq:11}
    \ma=\omega \my+\sum_{i=1}^n \beta_i\mx_i =\omega \my+ (\delta_1-\delta_2)\mx_1+(\delta_2-\delta_3)\mx_2+\cdots +\delta_n\mx_n,
  \end{equation}
  where $\sum_{i=k}^n \beta_i = \delta_{k}(\ma)$, and $\vdelta(\mx_i)$ is a zero-one vector with $k$ ones, $\omega$ is a complex scalar, and $\vdelta(\my)=0$.
\end{lemma}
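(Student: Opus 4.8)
The plan is to produce the decomposition explicitly in the eigenbasis of $\ma$ and to use the looseness (slackness) property---the freedom one has in the inner eigenvalues of a unit-discrepancy matrix---to collapse all of the ``central'' eigenvalue information into a single scalar term, in direct analogy with the summation-by-parts argument used in the preceding decomposition lemma for singular values. First I would diagonalize, $\ma=\sum_{j=1}^n \lambda_j\vv_j\vv_j^*$ with $\lambda_1\ge\cdots\ge\lambda_n$, and record the two families of quantities that govern the construction: the spreads $\delta_{2j-1}=\delta_{2j}=(\lambda_j-\lambda_{n-j+1})/2$ and the midpoints $\mu_j=(\lambda_j+\lambda_{n-j+1})/2$. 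Since $\delta_{2j-1}=\delta_{2j}$, every odd-indexed coefficient $\beta_{2k-1}=\delta_{2k-1}-\delta_{2k}$ vanishes, so only the even terms $\beta_{2k}=\delta_{2k}-\delta_{2k+2}$ survive; the immaterial odd terms may be assigned any matrix in $\Psi_{2k-1}(n)$.

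Next I would build the matrices $\mx_{2k}$, all diagonal in the basis $\{\vv_j\}$, by placing $+1$ on the top $k$ coordinates, $-1$ on the bottom $k$ coordinates, and a common inner value $s_k\in[-1,1]$ on the remaining coordinates. Pairing the sorted eigenvalues as in the Hermitian discrepancy formula shows at once that $\vdelta(\mx_{2k})=\mathbf{1}_{2k}$, i.e. $\mx_{2k}\in\Psi_{2k}(n)$, for \emph{any} $s_k\in[-1,1]$. Taking $\omega\my=\mu_1\mi_n$ (so $\my=\mi_n$, which indeed satisfies $\vdelta(\my)=0$), the claimed identity $\ma=\mu_1\mi_n+\sum_{k}\beta_{2k}\mx_{2k}$ reduces, coordinate by coordinate, to two requirements. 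The antisymmetric part of each pair forces $\sum_{k\ge j}\beta_{2k}=\delta_{2j}$, which holds automatically by telescoping (using $\delta_{2m+2}=0$ for $m=\lfloor n/2\rfloor$), and the symmetric part forces $\mu_1+\sum_{k<j}\beta_{2k}s_k=\mu_j$ for every pair index $j$.

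I would satisfy the symmetric constraints by the telescoping choice $s_k=(\mu_{k+1}-\mu_k)/\beta_{2k}$ (with $s_k=0$ when $\beta_{2k}=0$), which makes the inner value of $\mx_{2k}$ independent of $j$ and yields $\mu_1+\sum_{k<j}\beta_{2k}s_k=\mu_1+(\mu_j-\mu_1)=\mu_j$ exactly. The case of odd $n$ is covered by the identical formula once the lone central coordinate is treated as its own pair with $\mu_{(n+1)/2}=\lambda_{(n+1)/2}$, so that the final coordinate equation again closes by the same telescoping.

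The crux of the argument---the only step beyond bookkeeping---is verifying that these forced inner values actually lie in $[-1,1]$, for otherwise $\mx_{2k}$ would fail to be an element of $\Psi_{2k}(n)$. Writing $a=\lambda_k-\lambda_{k+1}\ge 0$ and $b=\lambda_{n-k}-\lambda_{n-k+1}\ge 0$, one computes $\mu_{k+1}-\mu_k=(b-a)/2$ and $\beta_{2k}=(a+b)/2$, whence $|s_k|=|b-a|/(a+b)\le 1$ by the elementary bound $|b-a|\le a+b$ for nonnegative $a,b$. This is precisely the slackness phenomenon in action: the gap budget $\beta_{2k}$ available in the $k$-th layer always dominates the midpoint shift it must absorb, so the inner eigenvalues never reach the outer $\pm 1$ values. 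With $|s_k|\le 1$ in hand, each $\mx_{2k}$ genuinely belongs to $\Psi_{2k}(n)$ and the displayed decomposition holds, completing the proof.
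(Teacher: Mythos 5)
Your proof is correct and takes essentially the same route as the paper's: you build the same diagonal matrices in the eigenbasis (your inner value $s_k=(\mu_{k+1}-\mu_k)/\beta_{2k}$ is exactly the paper's $(\alpha_{2k+1}-\alpha_{2k})/(\delta_{2k}-\delta_{2k+1})$), and you certify $\mx_{2k}\in\Psi_{2k}(n)$ via the identical key inequality $|b-a|\le a+b$ for the nonnegative gaps $a=\lambda_k-\lambda_{k+1}$ and $b=\lambda_{n-k}-\lambda_{n-k+1}$. The only (cosmetic) divergence is the odd-$n$ bookkeeping: you treat the central eigenvalue as its own pair with $\mu_{(n+1)/2}=\lambda_{(n+1)/2}$ and keep $\my=\mi_n$, whereas the paper instead writes $\my=\mi+\mo$ with a central correction $\mo$; your variant is in fact slightly tidier, since $\vdelta(\my)=0$ characterizes scalar matrices.
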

Note that the last two conditions imply that $\sum_{i=1}^n\vdelta(\mx_i) \beta_i = \vdelta(\ma)$. We now employ the slackness of discrepancy values of Hermitian matrices to prove the claim.
\begin{proof}
Note that for the Hermitian matrix $\ma$ we have $\beta_{2k-1}=0$ since $\delta_{2k-1}=\delta_{2k}$ for $k=1,\dots,\floor{\tfrac{n}{2}}$. First, we can verify that for $\Lambda=\diag(\vlambda(\ma))$ we have
\begin{equation*}
    \begin{split}
        \Lambda =  (\delta_2-\delta_3)&\overbrace{\diag\Big(\big[1,\frac{\alpha_3-\alpha_2}{\delta_2-\delta_3},\cdots,\frac{\alpha_3-\alpha_2}{\delta_2-\delta_3},-1\big]\Big) }^{\mx_2} \\
        &+ (\delta_4-\delta_5)\overbrace{\diag\Big(\big[1,1,\frac{\alpha_5-\alpha_4}{\delta_4-\delta_5},\cdots,\frac{\alpha_5-\alpha_4}{\delta_4-\delta_5},-1,-1\big]\Big)}^{\mx_4} \\ &\qquad\qquad\qquad +\cdots+\delta_{n-1}
\underbrace{\diag\Big(\big[1,\cdots,1,-1,\cdots,-1\big]\Big)}_{\mx_{n-1}}+\alpha_1\underbrace{(\mi+\mo)}_{\my}\; ,
    \end{split}
\end{equation*}
where we assume that $\alpha_1=\alpha_2=\frac{\lambda_1+\lambda_n}{2}$, $\alpha_3=\alpha_4=\frac{\lambda_2+\lambda_{n-1}}{2}, \cdots$, and $\delta_1=\delta_2=\frac{\lambda_1-\lambda_n}{2}$, $\delta_3=\delta_4=\frac{\lambda_2-\lambda_{n-1}}{2}, \cdots$. Matrix $\mo$ is zero when $n$ is even, otherwise only its central entry is nonzero, and we have $\vdelta(\my)=0$. To show that $\vdelta(\mx_2) = \mathbf{1}_2$, $\vdelta(\mx_4) = \mathbf{1}_4, \cdots$, we need to prove that 
\[
|\alpha_i-\alpha_{i-1}| \leq |\delta_i-\delta_{i-1}|
\]
for all odd $i$ greater than or equal to 3. For brevity, we only prove this claim for one case; the other cases follow similarly. We need to show that $|\alpha_3-\alpha_{2}| \leq |\delta_3-\delta_{2}|$, which is equivalent to
\[
|(\lambda_1-\lambda_2)-(\lambda_{n-1}-\lambda_{n})| \leq |(\lambda_1-\lambda_2)+(\lambda_{n-1}-\lambda_{n})|.
\]
This claim follows immediately after recognizing that $\lambda_1-\lambda_2 \geq 0$ and $\lambda_{n-1}-\lambda_n \geq 0$. Finally, the claim of the theorem follows because
\[
\ma = \mq\Lambda\mq^* = (\delta_2-\delta_3)\mq\mx_2\mq^*+(\delta_4-\delta_5)\mq\mx_4\mq^*+\cdots+\delta_{n-1}\mq\mx_{n-1}\mq^*+\alpha_1\mq\my\mq^*,
\]
and because discrepancy values are invariant under unitary similarity transforms.
\end{proof}

With these tools, we are ready to prove the second main result of this section. In particular, Theorem~\ref{thm:herm} fixes one matrix to be Hermitian and proves the majorization inequality~\eqref{wm:generalH} that is tighter than its counterpart implied by Theorem~\ref{thm:commuteMain}.
\begin{theorem}
  \label{thm:herm}
For the $n \times n$ square matrix $\mb$ and Hermitian matrix $\ma$ we have
\begin{equation}
\label{wm:generalH}
    \vsigma ([\ma,\mb]) \prec_w 2 \vdelta(\ma)\vdelta(\mb).
\end{equation}
\end{theorem}
\begin{proof}
Decomposing $\ma$ as per Lemma~\ref{lem:herdcomp}, we have
\begin{equation*}
    \begin{split}
        \vsigma([\mb,\ma])&=\vsigma\big(\big[\mb,\omega \my +\nlsum_{i=1}^n \beta_i \mx_i\big]\big) \\
        &\prec_w |\omega| \vsigma([\mb,\my])+\nlsum_{i=1}^n \beta_i \vsigma([\mb,\mx_i]) \\
        &\prec_w 2\vdelta(\mb)\nlsum_{i=1}^n \beta_i \vdelta(\mx_i) \\
        &= 2\vdelta(\mb)\vdelta(\ma),
    \end{split}
\end{equation*}
where the first majorization follows by the fact that $\beta_i\geq 0$, and the second majorization from Corollary~\ref{coroll:zero1}, and the fact that $\vdelta(\my)=0$.
\end{proof}
In fact we can amplify this theorem to a slightly stronger statement:
\begin{corollary}
\label{generalMajIneq}
For an $n \times n$ matrix $\ma$ and a normal matrix $\mb$ whose eigenvalues lie on a straight line in the complex plane, we have $\vsigma ([\mb,\ma]) \prec_w 2 \vdelta(\mb)\vdelta(\ma)$.
\end{corollary}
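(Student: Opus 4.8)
The plan is to reduce this statement to the Hermitian case already established in Inequality~\ref{wm:generalH}, by recognizing that ``eigenvalues on a straight line'' is exactly the coordinate-free way of saying that $\mb$ is a shift-plus-rotation of a Hermitian matrix. Since $\mb$ is normal, it admits a spectral decomposition $\mb = \mq\Lambda\mq^*$ with $\mq$ unitary and $\Lambda$ diagonal. The straight-line hypothesis means there exist $a\in\mathbb{C}$ and $\theta\in[0,2\pi)$ such that every eigenvalue has the form $\lambda_j = a + e^{i\theta} t_j$ with $t_j\in\mathbb{R}$; this is simply the parametrization of a line in $\mathbb{C}\cong\mathbb{R}^2$. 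Consequently $\Lambda = a\mi_n + e^{i\theta}\Diag(t_1,\dots,t_n)$, and setting $\mh := \mq\,\Diag(t_1,\dots,t_n)\,\mq^*$ (which is Hermitian) we obtain the decomposition $\mb = a\mi_n + e^{i\theta}\mh$.

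Next I would exploit the fact that scalar multiples of the identity commute with everything. Substituting the decomposition gives $[\mb,\ma] = [a\mi_n + e^{i\theta}\mh,\,\ma] = e^{i\theta}[\mh,\ma]$, since $[a\mi_n,\ma]=0$. Because $|e^{i\theta}|=1$, multiplication by $e^{i\theta}$ leaves singular values unchanged, so $\vsigma([\mb,\ma]) = \vsigma([\mh,\ma])$. As $\mh$ is Hermitian, the preceding theorem applies (with $\mh$ in the role of the Hermitian argument and $\ma$ in the role of the arbitrary one), yielding $\vsigma([\mh,\ma]) \prec_w 2\,\vdelta(\mh)\vdelta(\ma)$.

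Finally I would identify $\vdelta(\mh)$ with $\vdelta(\mb)$ using the invariances recorded earlier. From $\mb = a\mi_n + e^{i\theta}\mh$ we have $\mh = e^{-i\theta}(\mb - a\mi_n)$, so the eigenvalue-shift and eigenvalue-rotation invariances of the discrepancy values give $\vdelta(\mh) = \vdelta\big(e^{-i\theta}(\mb - a\mi_n)\big) = \vdelta(\mb - a\mi_n) = \vdelta(\mb)$. Chaining the three steps produces $\vsigma([\mb,\ma]) \prec_w 2\,\vdelta(\mb)\vdelta(\ma)$, as claimed.

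There is no serious obstacle here: the whole argument is the observation that the commutator absorbs the scalar shift while the discrepancy values absorb both the shift and the rotation, reducing everything to the Hermitian theorem. The only point requiring a little care is verifying that the straight-line hypothesis translates cleanly into the representation $\lambda_j = a + e^{i\theta}t_j$ with a single common $a$ and $\theta$ for all $j$, which then guarantees that $\mh$ is genuinely Hermitian rather than merely normal.
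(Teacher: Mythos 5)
Your proposal is correct and is precisely the argument the paper intends: the corollary is stated without proof as an immediate consequence of the Hermitian theorem, obtained exactly as you do by writing $\mb = a\mi_n + e^{i\theta}\mh$ with $\mh$ Hermitian, noting $[\mb,\ma]=e^{i\theta}[\mh,\ma]$, and invoking the eigenvalue-shift and eigenvalue-rotation invariances to get $\vdelta(\mh)=\vdelta(\mb)$. Your closing remark correctly isolates the only point needing care (a single common $a,\theta$ in the line parametrization), so nothing is missing.
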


Let us now focus on one special case of inequality~\eqref{wm:generalH} where one of the matrices is an orthogonal projection; i.e., $\mP=\mP^*=\mP^2$. This case uncovers a useful inequality between singular and discrepancy values that is known for the spectral spread.
\begin{corollary}
Let $\ma$ be an arbitrary $n$ by $n$ matrix, and $\mP$ be an orthogonal projection of rank $r$. Then for $1\le k\le n$ and $q=\min(2r, 2n-2r)$, we have
\[
\|[\ma, \mP]\|_{(k)} \leq \sum_{i=1}^{\min(q,k)} \delta_i(\ma).
\]
%where $.
\end{corollary}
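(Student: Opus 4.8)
The plan is to specialize the main commutator majorization inequality~\ref{wm:generalH} by putting the projection in the Hermitian slot. An orthogonal projection $\mP$ is Hermitian, so taking the Hermitian matrix in that theorem to be $\mP$ and the square matrix to be $\ma$ yields $\vsigma([\mP,\ma]) \prec_w 2\vdelta(\mP)\vdelta(\ma)$. Since $[\mP,\ma] = -[\ma,\mP]$ and singular values are unchanged under multiplication by the unitary $-\mi_n$, this is equivalent to $\vsigma([\ma,\mP]) \prec_w 2\vdelta(\mP)\vdelta(\ma)$. The entire statement then reduces to identifying the vector $\vdelta(\mP)$ explicitly and reading off the $k$-th partial sum of this weak majorization.

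The key step is to compute $\vdelta(\mP)$ from the closed form $\vdelta(\ma) = |\vlambda^{\downarrow}(\ma)-\vlambda^{\uparrow}(\ma)|^{\downarrow}/2$, which holds for Hermitian matrices. The projection $\mP$ has eigenvalue $1$ with multiplicity $r$ and eigenvalue $0$ with multiplicity $n-r$, so $\vlambda^{\downarrow}(\mP) = \mathbf{1}_r$ (that is, $r$ ones followed by zeros) while $\vlambda^{\uparrow}(\mP)$ is its reversal ($n-r$ zeros followed by $r$ ones). Subtracting entrywise, the difference is $+1$ in the first $\min(r,n-r)$ positions, $-1$ in the last $\min(r,n-r)$ positions, and $0$ in between; hence $|\vlambda^{\downarrow}(\mP)-\vlambda^{\uparrow}(\mP)|$ has exactly $q = \min(2r,2n-2r) = 2\min(r,n-r)$ entries equal to $1$ and the rest equal to $0$. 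Therefore $\vdelta(\mP) = \tfrac12\mathbf{1}_q$.

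Finally I would form the entrywise product on the right-hand side. With $2\vdelta(\mP) = \mathbf{1}_q$, already in decreasing order as $q$ ones followed by zeros, and $\vdelta(\ma)$ ordered decreasingly, the product vector $2\vdelta(\mP)\vdelta(\ma)$ equals $(\delta_1(\ma),\dots,\delta_q(\ma),0,\dots,0)$. Reading the weak majorization at level $k$ and using $\|[\ma,\mP]\|_{(k)} = \sum_{i=1}^k \sigma_i([\ma,\mP])$ gives exactly
\[
\|[\ma,\mP]\|_{(k)} \le \sum_{i=1}^{\min(q,k)} \delta_i(\ma),
\]
which is the claim. I do not expect a genuine obstacle, since the result is a direct specialization of inequality~\ref{wm:generalH}. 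The only point requiring care is the bookkeeping in the eigenvalue subtraction that fixes $q$, in particular unifying the two regimes $r \le n/2$ and $r > n/2$ through $q = 2\min(r,n-r)$, and applying the entrywise-product convention to the decreasingly ordered vectors so that the truncation at $\min(q,k)$ emerges correctly.
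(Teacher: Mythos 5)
Your proposal is correct and matches the paper's intended argument: the statement is presented as a direct corollary of inequality~\ref{wm:generalH}, with the only substantive work being the computation $\vdelta(\mP)=\tfrac12\mathbf{1}_q$ where $q=2\min(r,n-r)$, which you carry out correctly via the closed form $\vdelta(\ma)=|\vlambda^{\downarrow}(\ma)-\vlambda^{\uparrow}(\ma)|^{\downarrow}/2$. Your bookkeeping of the eigenvalue subtraction, the absorption of the factor $2$ into $2\vdelta(\mP)=\mathbf{1}_q$, and the truncation at $\min(q,k)$ in the partial sums are all exactly right.
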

\begin{corollary}
Given $\mx \in \C^{n\times n}$ and an orthogonal projection $\mP$, we have
\begin{equation}
\label{eq:proj}
    \vsigma(\mP \mx (\mi_n-\mP)) \prec_w \vdelta(\mx).
\end{equation}
\end{corollary}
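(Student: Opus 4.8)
The plan is to recognize $\mP\mx(\mi_n-\mP)$ as (an isometric embedding of) the off-diagonal block of $\mx$ relative to the orthogonal splitting $\mathbb{C}^n=\operatorname{Range}(\mP)\oplus\operatorname{Range}(\mi_n-\mP)$, and then to read the statement directly off Corollary~\ref{corl:perp}. Concretely, set $r=r(\mP)$ and choose an isometry $\ms\in\mm_{n,r}(\mathbb{C})$ whose columns form an orthonormal basis of $\operatorname{Range}(\mP)$, so that $\mP=\ms\ms^*$ and $\mi_n-\mP=\ms_{\perp}\ms_{\perp}^*$. Then
\[
\mP\mx(\mi_n-\mP)=\ms\,(\ms^*\mx\ms_{\perp})\,\ms_{\perp}^*,
\]
which exhibits $\mP\mx(\mi_n-\mP)$ as the compression $\ms^*\mx\ms_{\perp}$ padded with zeros back into $\mm_n(\mathbb{C})$.

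The key algebraic point I would establish first is the singular-value identity $\vsigma(\mP\mx(\mi_n-\mP))=\vsigma(\ms^*\mx\ms_{\perp})$. This follows because $\ms$ and $\ms_{\perp}$ are isometries: computing $\big(\ms\mM\ms_{\perp}^*\big)^*\big(\ms\mM\ms_{\perp}^*\big)=\ms_{\perp}\mM^*\ms^*\ms\mM\ms_{\perp}^*=\ms_{\perp}(\mM^*\mM)\ms_{\perp}^*$ with $\mM=\ms^*\mx\ms_{\perp}$ and $\ms^*\ms=\mi_r$, so the nonzero eigenvalues of the Gram matrix are unchanged, hence so are the nonzero singular values. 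With this identification in hand, Corollary~\ref{corl:perp} applies verbatim (taking its isometry to be our $\ms$, with $k=r$) and yields
\[
\vsigma(\ms^*\mx\ms_{\perp})\prec_w \restr{\vdelta(\mx)}{\min(r,n-r)}.
\]

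To finish, I would note that $\restr{\vdelta(\mx)}{\min(r,n-r)}$ is obtained from $\vdelta(\mx)$ by retaining its first (largest) $\min(r,n-r)$ entries and setting the remaining entries to zero, so trivially $\restr{\vdelta(\mx)}{\min(r,n-r)}\prec_w\vdelta(\mx)$; chaining the two weak-majorizations gives the claimed $\vsigma(\mP\mx(\mi_n-\mP))\prec_w\vdelta(\mx)$. There is essentially no serious obstacle here: the entire statement is a repackaging of Corollary~\ref{corl:perp}. The only step requiring a moment of care is the singular-value identity above, i.e.\ checking that the isometric embedding does not create new nonzero singular values and that the restriction to $\min(r,n-r)$ coordinates is harmlessly dominated by the full discrepancy vector.
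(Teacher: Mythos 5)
Your proof is correct, but it is not the route the paper takes. The paper's own proof stays inside the commutator framework of that section: it observes that the nonzero singular values of $\mP\mx(\mi_n-\mP)$ are among the singular values of $[\mP,\mx]$ (in the block splitting induced by $\mP$, the commutator is exactly the pair of off-diagonal blocks $\mP\mx(\mi_n-\mP)$ and $-(\mi_n-\mP)\mx\mP$), so $\vsigma(\mP\mx(\mi_n-\mP)) \prec_w \vsigma([\mP,\mx])$, and then invokes the commutator majorization~\ref{wm:generalH} with the Hermitian matrix $\mP$, for which $2\vdelta(\mP)$ is the zero--one vector $\mathbf{1}_q$ with $q=\min(2r,2n-2r)$, giving $\vsigma([\mP,\mx]) \prec_w \restr{\vdelta(\mx)}{q} \prec_w \vdelta(\mx)$. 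You instead bypass commutators entirely: writing $\mP=\ms\ms^*$ and identifying $\mP\mx(\mi_n-\mP)=\ms(\ms^*\mx\ms_{\perp})\ms_{\perp}^*$ as an isometric padding of the compression $\ms^*\mx\ms_{\perp}$, you reduce the claim to Corollary~\ref{corl:perp}, which is itself a Section~4 consequence of the anti-pinching inequality. Both chains are sound (your singular-value identity under isometric padding is verified correctly), and at bottom they are cousins, since $[\mP,\mx]$ and the anti-pinching $\mathcal{L}(\mx)$ differ by the unitary factor $2\mP-\mi_n$; but your route is more economical in its dependencies and actually yields the sharper intermediate bound $\vsigma(\mP\mx(\mi_n-\mP)) \prec_w \restr{\vdelta(\mx)}{\min(r,n-r)}$, using only the top $\min(r,n-r)$ discrepancy values rather than the top $2\min(r,n-r)$ that the commutator route produces (the commutator carries both off-diagonal blocks, whereas you keep only one). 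What the paper's route buys in exchange is the explicit comparison $\vsigma(\mP\mx(\mi_n-\mP)) \prec_w \vsigma([\mP,\mx])$, which situates the corollary as a genuine consequence of the section's main commutator theorem. The only (harmless) corner case to note in your write-up is $r\in\{0,n\}$, where $\ms_{\perp}$ or $\ms$ is empty and the left-hand side vanishes, so the claim is trivial.
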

\begin{proof}
The claim follows from $\vsigma(\mP \mx (\mi_n-\mP)) \prec_w \vsigma([\mP,\mx])$, which holds since the singular values of $[\mP,\mx]$ include all the nonzero singular values of $\mP \mx (\mi_n-\mP)$.
\end{proof}
Inequality~\eqref{eq:proj} was previously known for the spectral spread of self-adjoint operators. Here we show that it holds for arbitrary square matrices. Following the proof of Theorem 4.14 in~\cite{massey2021norm}, the majorization~\eqref{eq:proj} implies the following inequality:
\begin{equation}
    \vsigma(\ma\mx\mb^*) \prec_w \vdelta\big((\ma^*\ma+\mb^*\mb)^{1/2}\mx(\ma^*\ma+\mb^*\mb)^{1/2}\big),
\end{equation}
where $\ma,\mb$ and $\mx$ are arbitrary square matrices.

\subsection{Maximally non-commutative Hermitian matrices}
In this section, we use the majorization bounds derived in the previous part, together with Fan's dominance theorem to solve some optimization problems. These results indicate that the inequalities are sharp. We begin by recalling Fan's celebrated dominance theorem.
\begin{theorem}[Fan's dominance theorem]
Let $\ma,\mb \in \C^{m\times n}$. Then, $\vertiii{\ma} \leq \vertiii{\mb}$ for any unitarily invariant norm on $\C^{m\times n}$ if and only if $\vsigma(\ma) \prec_w \vsigma(\mb)$.
\end{theorem}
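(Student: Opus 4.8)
The plan is to establish the two implications separately, reducing everything to the correspondence between unitarily invariant norms and symmetric gauge functions together with the definition of weak majorization.

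The easy direction is to show that $\vertiii{\ma} \le \vertiii{\mb}$ for every unitarily invariant norm implies $\vsigma(\ma) \prec_w \vsigma(\mb)$. For this I would simply specialize the hypothesis to the Ky-Fan norms $\|\cdot\|_{(k)}$, $k = 1, \dots, n$. Each $\|\cdot\|_{(k)}$ is a unitarily invariant norm (its value depends only on the singular values, which are unchanged under $\ma \mapsto \mU\ma\mv$), so the hypothesis yields $\|\ma\|_{(k)} \le \|\mb\|_{(k)}$ for every $k$. Recalling that $\|\ma\|_{(k)} = \sum_{i=1}^k \sigma_i^{\downarrow}(\ma)$, this chain of inequalities is \emph{verbatim} the definition of $\vsigma(\ma) \prec_w \vsigma(\mb)$, so this direction needs nothing further.

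For the converse, assume $\vsigma(\ma) \prec_w \vsigma(\mb)$ and fix an arbitrary unitarily invariant norm $\vertiii{\cdot}$. The key structural input is von Neumann's theorem: every unitarily invariant norm can be written as $\vertiii{\ma} = \Phi(\vsigma(\ma))$ for a symmetric gauge function $\Phi$ on $\mathbb{R}^n$, i.e. a norm invariant under permutations and sign changes of the coordinates. The problem thus reduces to the purely vector statement that $\Phi$ is monotone under weak majorization of nonnegative vectors, namely $\vx \prec_w \vy$ with $\vx, \vy \ge 0$ forces $\Phi(\vx) \le \Phi(\vy)$; applying this with $\vx = \vsigma(\ma)$ and $\vy = \vsigma(\mb)$ completes the argument.

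The hard part is this monotonicity lemma, and I would prove it in two moves. First, given $\vx \prec_w \vy$ with nonnegative entries, one constructs an intermediate vector $\tilde{\vy}$ with $\vx \prec \tilde{\vy}$ (full majorization, equal sums) and $0 \le \tilde{\vy}^{\downarrow} \le \vy^{\downarrow}$ componentwise, by lowering the smallest coordinates of $\vy$ just enough to absorb the deficit in the total sum. Since $\vx \prec \tilde{\vy}$, the Hardy–Littlewood–Pólya theorem (equivalently, Birkhoff's theorem) expresses $\vx$ as a convex combination of permutations of $\tilde{\vy}$; convexity and permutation-invariance of $\Phi$ then give $\Phi(\vx) \le \Phi(\tilde{\vy})$. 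Second, the componentwise inequality $\tilde{\vy}^{\downarrow} \le \vy^{\downarrow}$ together with the componentwise monotonicity that every symmetric gauge function enjoys yields $\Phi(\tilde{\vy}) \le \Phi(\vy)$, and chaining the two gives $\Phi(\vx) \le \Phi(\vy)$. I would invoke the standard majorization and symmetric-gauge-function machinery from the references already cited rather than reproving von Neumann's correspondence from scratch.
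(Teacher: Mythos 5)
Your proof is correct, but note that the paper itself offers no proof of this statement: it is recalled verbatim as the ``celebrated'' classical theorem of Ky Fan, with the standard matrix-analysis texts cited in Section~2 serving as the reference. What you have written is essentially the canonical textbook argument (as in Bhatia or Horn--Johnson): the forward direction by specializing to the Ky-Fan norms, and the converse via von Neumann's correspondence between unitarily invariant norms and symmetric gauge functions, reduced to the lemma that a symmetric gauge function $\Phi$ is monotone under weak majorization of nonnegative vectors. Your construction of the intermediate vector $\tilde{\vy}$ does work: zeroing out the tail of $\vy^{\downarrow}$ and trimming one boundary entry so that $\sum_i \tilde{y}_i = \sum_i x_i$ keeps the vector nonnegative and decreasing, leaves the partial sums unchanged for small $k$ and equal to $\sum_i x_i$ for large $k$, so $\vx \prec \tilde{\vy}$ and $\tilde{\vy}^{\downarrow} \leq \vy^{\downarrow}$ hold as you claim; Birkhoff plus convexity and the (absoluteness-implied) componentwise monotonicity of $\Phi$ then finish it. Some texts instead use the dual construction --- raising $\vx$ to a vector $\vu \geq \vx$ with $\vu \prec \vy$ --- but the two are interchangeable here. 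The only bookkeeping slip is the rectangular setting: for $\ma, \mb \in \mm_{m,n}(\mathbb{C})$ there are $\min(m,n)$ singular values, so the Ky-Fan indices and the gauge function should range over $k = 1, \dots, \min(m,n)$ rather than $n$; this does not affect the argument.
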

The diameter of the unitary orbit of a matrix $\ma \in \C^{n\times n}$ w.r.t.\ the $k\textsuperscript{th}$ Ky-Fan norm is defined by the following optimization problem:
\begin{equation}
    d_k(\ma) := \max_{\mU,\mv \in \mathbf{U}(n)}\|\mv\ma\mv^* - \mU\ma\mU^*\|_{(k)}.
\end{equation}
By inequality~\eqref{ineq:main}, we know that $d_k(\ma) \leq 2 \|\ma\|^{\delta}_{(k)}$. Importantly, for Hermitian matrices we can show that this bound is tight.

\begin{proposition}
[Generalization of Theorem~1.2 in~\cite{bhatia1999orthogonality}] Let $\ma$ be a Hermitian matrix, then $d_k(\ma) = 2 \|\ma\|^{\delta}_{(k)}$.
\end{proposition}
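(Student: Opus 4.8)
The plan is to take the upper bound $d_k(\ma)\le 2\|\ma\|^{\delta}_{(k)}$ for granted (it is exactly the consequence of inequality~\ref{ineq:main} recorded just before the statement) and to exhibit an explicit pair of unitaries that attains it. First I would reduce to the diagonal case: since $d_k$ and $\|\cdot\|^{\delta}_{(k)}$ are both invariant under unitary similarity, I may assume $\ma=\Lambda=\Diag(\lambda_1,\dots,\lambda_n)$ with $\lambda_1\ge\cdots\ge\lambda_n$. It is also worth noting, for intuition, that writing $\mW=\mU^*\mv$ and using unitary invariance of the Ky-Fan norm gives $\|\mv\ma\mv^*-\mU\ma\mU^*\|_{(k)}=\|\mW\ma\mW^*-\ma\|_{(k)}$, so the orbit diameter is really $\max_{\mW\in\mathbf{U}(n)}\|\mW\ma\mW^*-\ma\|_{(k)}$; for the lower bound I only need one good choice of $\mW$.

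The construction I would use is to take $\mU=\mi_n$ and let $\mv$ be the block-diagonal unitary acting as the $90^{\circ}$ rotation $\bigl(\begin{smallmatrix}0&1\\-1&0\end{smallmatrix}\bigr)$ on each coordinate pair $(j,\,n-j+1)$ for $j=1,\dots,\floor*{n/2}$, fixing the central coordinate when $n$ is odd. On the pair $(j,n-j+1)$ this rotation swaps the two diagonal entries of $\Lambda$, so $\mv\Lambda\mv^*-\Lambda$ is block-diagonal with $2\times2$ blocks $\Diag(\lambda_{n-j+1}-\lambda_j,\;\lambda_j-\lambda_{n-j+1})$ (and a zero on the center for odd $n$). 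Consequently the singular values of $\mv\Lambda\mv^*-\Lambda$ are precisely the numbers $\lambda_j-\lambda_{n-j+1}$ for $j=1,\dots,\floor*{n/2}$, each occurring with multiplicity two (plus a zero when $n$ is odd).

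It then remains to sum the largest $k$ of these singular values and match it against $2\|\ma\|^{\delta}_{(k)}$. Invoking the Hermitian formula $\delta_{2j-1}(\ma)=\delta_{2j}(\ma)=(\lambda_j-\lambda_{n-j+1})/2$ established earlier, I have $2\|\ma\|^{\delta}_{(k)}=2\sum_{i=1}^{k}\delta_i(\ma)$, and because the block singular values come in equal pairs that decrease in $j$, the top-$k$ sum of $\mv\Lambda\mv^*-\Lambda$ equals exactly this quantity. Hence $d_k(\ma)\ge\|\mv\Lambda\mv^*-\Lambda\|_{(k)}=2\|\ma\|^{\delta}_{(k)}$, which together with the upper bound yields equality. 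The one delicate point—and essentially the only place requiring care—is the odd-$k$ bookkeeping: when $k=2m-1$, the top-$k$ sum takes both copies of $\lambda_j-\lambda_{n-j+1}$ for $j<m$ but only a single copy of $\lambda_m-\lambda_{n-m+1}$, and I would check that this single copy contributes exactly the ``half'' discrepancy value $2\cdot\tfrac{1}{2}(\lambda_m-\lambda_{n-m+1})$ needed, so that the same single rotation $\mv$ is simultaneously optimal for every $k$. I do not expect any further analytic obstacle.
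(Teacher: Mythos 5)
Your proof is correct and takes essentially the same approach as the paper: the paper likewise reduces to the diagonal case and exhibits a single order-reversing unitary, choosing $\mU$ with $\mq^*\mU\mq=\mj_n$ (the exchange matrix), whose conjugation swaps each pair $(\lambda_j,\lambda_{n-j+1})$ exactly as your blockwise $90^{\circ}$ rotation does, yielding a diagonal difference with singular values $|\lambda_i-\lambda_{n-i+1}|$. The remaining step---matching the top-$k$ singular-value sum against $2\sum_{i=1}^{k}\delta_i(\ma)$, including the odd-$k$ half-pair bookkeeping you flag---is the same in both arguments and your verification of it is sound.
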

\begin{proof}
Let $\ma$ have the eigenvalue decomposition $\ma = \mq \mLambda \mq^*$, such that the eigenvalues are in nonincreasing order, and $\mq^*\mU\mq$ is the exchange matrix. Then, 
\[
\|\ma - \mU\ma\mU^*\|_{(k)} = \|\mLambda - (\mq^*\mU\mq)\mLambda(\mq^*\mU\mq)^*\|_{(k)} = \sum_{i=1}^k |\lambda_i-\lambda_{n-i+1}|^{\downarrow}.
\]
\end{proof}

A special case of inequality~\eqref{wm:generalH} states that for Hermition $\ma$ and $\mb$, we have
\begin{equation}
\label{wineq1}
\vsigma([\ma,\mb]) \prec_w \frac{|\vlambda^{\downarrow}(\ma)-\vlambda^{\uparrow}(\ma)\|\vlambda^{\downarrow}(\mb)-\vlambda^{\uparrow}(\mb)|}{2}.
\end{equation}
Before proving that this inequality is sharp (it also implies that inequality~\eqref{wm:generalH} is sharp), let us define a family of rotation matrices.
\begin{definition}
 We define the rotation matrix $\mr_n(\theta)$ for even and odd $n$ as
$$
\scalemath{0.87}{\begin{bmatrix}
\cos{\theta} & 0 &\hdots & 0 & -\sin{\theta} \\
0 & \ddots & & \adots & 0 \\
\vdots & & \begin{matrix}
\cos{\theta} & -\sin{\theta} \\
\sin{\theta} & \cos{\theta}
\end{matrix} & & \vdots\\
 0 & \adots & & \ddots & 0 \\
\sin{\theta} & 0 &\hdots & 0 & \cos{\theta}
\end{bmatrix}\ 
\text{and}\
\begin{bmatrix}
\cos{\theta} & 0 &\hdots & 0 & -\sin{\theta} \\
0 & \ddots & & \adots & 0 \\
\vdots & & \begin{matrix}
\cos{\theta} & 0 & -\sin{\theta} \\
0 & 1 & 0 \\
\sin{\theta} & 0 & \cos{\theta}
\end{matrix} & & \vdots\\
 0 & \adots & & \ddots & 0 \\
\sin{\theta} & 0 &\hdots & 0 & \cos{\theta}
\end{bmatrix}},
$$
respectively, where the angle $\theta \in [0,2\pi)$. This family of matrices can be viewed as the direct sum of two by two rotation matrices. 
\label{defR}
\end{definition}

\begin{theorem}
\label{thm1}
 Let $\ma,\mb$ be two Hermitian matrices with the eigenvalue decompositions $\ma = \mq \mLambda \mq^*$ and $\mb = \mv \mD \mv^*$. Assume that $\mLambda = \diag([\lambda_1,\cdots, \lambda_n])$ and $\mD = \diag([d_1,\cdots, d_n])$ where $\lambda_1 \geq \cdots \geq \lambda_n$ and $d_1 \geq \cdots \geq d_n$. Then, we have
\begin{equation}
  \mq\mr_n(\tfrac{\pi}{4})\mv^* \in \argmax_{\mU \in \mathbf{U}(n)} \vertiii{\,[\ma , \mU\mb\mU^*]\,}.
    \label{opt11}
\end{equation}
\end{theorem}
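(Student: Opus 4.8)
The plan is to show that the unitary $\mU^\star = \mq\mr_n(\tfrac{\pi}{4})\mv^*$ makes the commutator $\ma\mU\mb\mU^*-\mU\mb\mU^*\ma$ attain, for every Ky-Fan norm simultaneously, the upper bound furnished by inequality~\ref{wineq1}. Since Fan's dominance theorem says a single matrix dominates another in \emph{all} unitarily invariant norms precisely when its singular values weakly majorize those of the other, it suffices to prove that at this particular $\mU^\star$ the singular values of the commutator equal the majorization bound $\tfrac12|\vlambda^{\downarrow}(\ma)-\vlambda^{\uparrow}(\ma)|\,|\vlambda^{\downarrow}(\mb)-\vlambda^{\uparrow}(\mb)|$. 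Because inequality~\ref{wm:generalH} (specialized to~\ref{wineq1}) already gives $\prec_w$ for every competitor $\mU$, any $\mU$ that achieves this bound is automatically a maximizer of $\vertiii{\cdot}$ for every unitarily invariant norm at once, which is exactly the claim~\ref{opt11}.

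First I would reduce to a normalized setting. Using the invariances of the problem, write $\ma\mU\mb\mU^*-\mU\mb\mU^*\ma$ and substitute $\mU=\mq\mr_n(\tfrac\pi4)\mv^*$; conjugating the whole commutator by $\mq^*$ (a unitary, hence norm-preserving) turns it into $\mLambda\,\mr_n(\tfrac\pi4)\mD\,\mr_n(\tfrac\pi4)^* - \mr_n(\tfrac\pi4)\mD\,\mr_n(\tfrac\pi4)^*\mLambda$, i.e. the commutator of the \emph{diagonal} matrix $\mLambda$ with the matrix $\widetilde{\mD} := \mr_n(\tfrac\pi4)\mD\,\mr_n(\tfrac\pi4)^*$. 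The key structural fact is that $\mr_n(\tfrac\pi4)$ is a direct sum of $2\times2$ rotations by $\pi/4$ (Definition~\ref{defR}), each rotation pairing coordinate $i$ with coordinate $n-i+1$. Consequently $\widetilde{\mD}$ decomposes into a direct sum of $2\times2$ blocks (plus a possible central $1\times1$ block when $n$ is odd), where the $i$-th block is the $\pi/4$-rotation of $\mathrm{diag}(d_i,d_{n-i+1})$.

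Next I would compute each $2\times2$ block of the commutator explicitly. For the $i$-th pair, $[\,\mathrm{diag}(\lambda_i,\lambda_{n-i+1}),\ \mr(\tfrac\pi4)\mathrm{diag}(d_i,d_{n-i+1})\mr(\tfrac\pi4)^*\,]$ is an antidiagonal $2\times2$ matrix whose off-diagonal magnitude works out, after the $\cos\tfrac\pi4\sin\tfrac\pi4=\tfrac12$ factors, to exactly $\tfrac12|\lambda_i-\lambda_{n-i+1}|\,|d_i-d_{n-i+1}|$. Since the overall commutator is block-diagonal, its singular values are precisely the union of these block singular values, namely the numbers $\tfrac12|\lambda_i-\lambda_{n-i+1}|\,|d_i-d_{n-i+1}|$ over the pairs $i$. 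Reordering these decreasingly gives precisely the right-hand side of~\ref{wineq1}, so the bound is met with equality.

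Finally I would invoke Fan's dominance theorem to upgrade this equality-in-singular-values into the desired extremal statement for every unitarily invariant norm. The main obstacle I anticipate is the $2\times2$ block computation together with verifying that the resulting block singular values, once sorted, line up with the sorted product vector in~\ref{wineq1}; one must be slightly careful that the pairing $(\lambda_i,\lambda_{n-i+1})$ induced by $\mr_n(\tfrac\pi4)$ matches the $\vlambda^{\downarrow}$-against-$\vlambda^{\uparrow}$ pairing appearing in the bound, and that the odd-$n$ central block contributes a zero singular value consistently with the extra zero noted earlier for $\vdelta$ of odd-dimensional Hermitian matrices. This is a finite, explicit verification rather than a conceptual difficulty, so the heart of the proof is really the observation that $\mr_n(\tfrac\pi4)$ block-diagonalizes the problem into independent $2\times2$ pieces that each saturate the scalar inequality $\tfrac12|\lambda_i-\lambda_{n-i+1}|\,|d_i-d_{n-i+1}|$.
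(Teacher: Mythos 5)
Your proposal is correct and follows essentially the same route as the paper's proof: substitute $\tilde{\mU}=\mq\mr_n(\tfrac{\pi}{4})\mv^*$, observe that the resulting commutator $\mLambda\mr_n(\tfrac{\pi}{4})\mD\mr_n(\tfrac{\pi}{4})^*-\mr_n(\tfrac{\pi}{4})\mD\mr_n(\tfrac{\pi}{4})^*\mLambda$ is anti-diagonal skew-Hermitian with singular values $\tfrac{1}{2}|\lambda_i-\lambda_{n-i+1}||d_i-d_{n-i+1}|$ (your $2\times 2$ block decomposition is just a permuted view of this same computation), so the bound of inequality~\ref{wineq1} is attained, and conclude via Fan's dominance theorem. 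The care you flag about the pairing and the odd-$n$ central zero is exactly the finite verification the paper performs implicitly.
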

\begin{proof}
Note that the matrix $\tilde{\mU} := \mq\mr_n(\tfrac{\pi}{4})\mv^*$ is indeed unitary, and it is independent of the eigenvalues. At the maximum point, we have the equality
$$
\vsigma(\ma\tilde{\mU}\mb\tilde{\mU}^* - \tilde{\mU}\mb\tilde{\mU}^*\ma) = \vsigma(\mLambda\mr_n\mD\mr_n^*-\mr_n(\tfrac{\pi}{4})\mD\mr_n(\tfrac{\pi}{4})^*\mLambda).
$$
And the matrix $\mm =\mLambda\mr_n(\tfrac{\pi}{4})\mD\mr_n(\tfrac{\pi}{4})^*-\mr_n(\tfrac{\pi}{4})\mD\mr_n(\tfrac{\pi}{4})^*\mLambda$ is an anti-diagonal skew Hermitian matrix, hence $(\mm\mm^*)^{1/2}$ is diagonal with values
$$
\sigma_i(\mm) = \frac{|\lambda_i-\lambda_{n-i+1}\|d_i-d_{n-i+1}|}{2}, \qquad \text{for } i=1,\dots,n.
$$
From inequality~\eqref{wineq1}, we know that these specific singular values majorize the singular values of $[\ma, \mU\mb\mU^*]$ for an arbitrary unitary matrix $\mU$. Hence, by Fan dominance theorem $\tilde{\mU}$ is a maximizer of the above optimization problem.
\end{proof}

One can interpret Theorem~\ref{thm1} as follows: two Hermitian matrices with fixed eigenvalues are maximally non-commutative when their eigenspaces are rotated $\mr_n(\tfrac{\pi}{4})$ relative to each other. As an example, let represent the eigenvalues and eigenvectors of two $2 \times 2$ real positive definite matrices as two ellipses. This can be viewed as the linear transformation of a unit circle, when the transformation is represented by a positive definite matrix. We are allowed to rotate the matrices so that they become maximally non-commutative. The maximal configuration is illustrated in Fig.~\ref{fig:2ellipsis}.

\begin{figure}[t]
    \centering
    \includegraphics[width=0.4\textwidth]{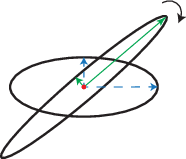}
    \caption{Two $2 \times 2$ real positive definite matrices are maximally non-commutative when their corresponding eigenspaces have the above configuration relative to each other. Here, each ellipse represents a matrix. The length of the semi-axes of the ellipses are the eigenvalues of the positive definite matrix, and the directions of the semi-axes are the corresponding eigenvectors.}
    \label{fig:2ellipsis}
\end{figure}

\subsection{Miscellaneous results} %Majorization bound for other operations with scalar invariance}
We now briefly note (without proof) some additional results; we refer the interested reader to the longer version~\cite{zadeh2021} of this work for proofs and more elaborate context motivating the noted results.

\iffalse
For $\vdelta(\ma, 0) = \min_{\alpha \in \mathbb{C}} \;\; \frac{\|\ma- \alpha \mi\|_{(k)} +k |\alpha|}{2}$, we have
\begin{equation}
    \vsigma (\ma) = 2\vdelta(\ma, 0).
\end{equation}
Since by triangle inequality we have for any $\alpha$
\[
\|\ma\|_{(k)} \leq \|\ma-\alpha\mi_n\|_{(k)}+\|\alpha\mi_n\|_{(k)},
\]
and if we let $\alpha=0$, the inequality becomes equality.

We also have $\vdelta(\ma,\ma^*)=\vdelta(\ma)$, thus $\vsigma(\ma-\ma^*) \prec_w 2\vdelta(\ma)$, which means that $|\vlambda(\ms(\ma))| \prec_w \vdelta(\ma)$ and $\vsigma(\ma^2-\ma \ma^*) \prec_w 2\vsigma(\ma)\vdelta(\ma)$. Using a similar argument we can deduce:
\fi
\begin{proposition}
For any square matrix $\mx \in \C^{n\times n}$, we have
\begin{align*}
 \sum_{i=1}^k \delta_i(\mx,\mx^*)  = \min_{\alpha \in \mathbb{R}} \|\mx+\alpha\mi_n\|_{(k)},\quad
 \sum_{i=1}^k \delta_i(-\mx,\mx^*) = \min_{\alpha \in \mathbb{R}} \|\mx+i \alpha\mi_n\|_{(k)}.  
\end{align*}
\end{proposition}
They enable us obtaining majorization bounds for $\vsigma(\ma\mb^*-\mb\ma)$ and $\vsigma(\ma\mb^*+\mb\ma)$.
% Now we want to derive similar inequalities for the commutators. First, we need the following lemma:
% Also, note that the inequality~\eqref{wm:generalH} implies majorization bounds for other operations:
% \[
% \vsigma(\ma^2\mb^2 - (\ma\mb)^2) = \vsigma(\ma[\ma,\mb]\mb) \prec_w 2\vdelta(\ma)\vdelta(\mb)\vsigma(\ma)\vsigma(\mb).
% \]
% Indeed we can solve similar optimization problems using the same technique. First, let us generalize the inequalities in the last part.
We can generalize Corollary~\ref{coroll:zero1} as follows:
\begin{proposition}
Let $\ma_1, \dots, \ma_k$ be square matrices of the same size. Then we have
$$
\vsigma\big([\ma_1, [\ma_2, \dots, [\ma_{k-1},\ma_k]\dots ]]\big) \prec_w 2^{k-1} \vsigma(\ma_1) \vdelta(\ma_2) \dots \vdelta(\ma_k).
$$
% This follows by repeated application of $\vsigma([\mx, \my]) \prec_w 2 \vdelta(\mx)\vsigma(\my)$.
%If $\ma_1$ is Hermitian, then we % in the very last inequality we can use $\vsigma([\mx, \my]) \prec_w 2 \vdelta(\mx)\vdelta(\my)$ and obtain
% $$
% \vsigma\big([\ma_1, [\ma_2, \dots, [\ma_{k-1},\ma_k]\dots ]]\big) \prec_w 2^{k-1} \vdelta(\ma_1) \vdelta(\ma_2) \dots \vdelta(\ma_k),
% $$
While if all $\ma_1, \dots, \ma_k$ are Hermitian,
$$
\vsigma\big([\ma_1, [\ma_2, \dots, [\ma_{k-1},\ma_k]\dots ]]\big) \prec_w \frac{|\vlambda^{\downarrow}(\ma_1)-\vlambda^{\uparrow}(\ma_1)|\cdots|\vlambda^{\downarrow}(\ma_k)-\vlambda^{\uparrow}(\ma_k)|}{2}.
$$
\end{proposition}
\iffalse %{
\begin{proposition}
For Hermitian matrices $\ma,\mb \in \C^{n\times n}$ with the eigenvalue decompositions $\ma = \mq \mLambda \mq^*$ and $\mb = \mv \mD \mv^*$, we have
\begin{equation*}
    \begin{split}
       \mq\mr_n(\tfrac{\pi}{4})\mv^* \in  \argmax_{\mU \in \mathbf{U}(n)} & \vertiii{[\ma,[\ma, \mU\mb\mU^*]]}  =
    \argmax_{\mU \in \mathbf{U}(n)}\vertiii{[\ma,[\ma,[\ma,[\ma,\mU\mb\mU^*]]]]} \\
    & =\argmax_{\mU \in \mathbf{U}(n)} \vertiii{[\ma,[\mU\mb\mU^*,[\mU\mb\mU^*,[\mU\mb\mU^*,\ma]]]]}
    \end{split}
\end{equation*}
\end{proposition}
\fi %}
Discrepancy values could be useful when there is a multiplicative scalar invariance.
\begin{theorem}
  \label{thm:scaling}
Let $\ma,\mb\in \C^{n\times n}$. Then,
\[
\vsigma(e^{\ma}\mb e^{-\ma}) \prec_w \vsigma(\mb) \exp{(2\vdelta(\ma))},
\]
where $\exp{(\cdot)}$ denotes the elementwise exponential.
\end{theorem}
\iffalse %{
\begin{proof}
We know that (see e.g.,~\cite{bellman1997introduction})
$$
e^{\ma}\mb e^{-\ma} = \mb +[\ma,\mb] + [\ma,[\ma,\mb]]/2!+\cdots.
$$
Therefore, $\vsigma(e^{\ma}\mb e^{-\ma}) \prec_w \vsigma(\mb) +\vsigma([\ma,\mb]) + \vsigma([\ma,[\ma,\mb]]/2!)+\cdots$, so that
\[
\vsigma(e^{\ma}\mb e^{-\ma}) \prec_w \vsigma(\mb) \sum_{j=0}^{\infty} \frac{(2\vdelta(\ma))^j}{j!}.
\]
Using the Taylor expansion of $\exp(\cdot)$ we get the result.
\end{proof}
\fi %}
%Theorem~\ref{thm:scaling} yields the following:
\begin{corollary}
Let $\ma$ be strictly positive definite, and $\mb \in \C^{n\times n}$. Then, 
\[
\vsigma(\ma\mb\ma^{-1}) \prec_w \vsigma(\mb) \frac{\vlambda^{\downarrow}(\ma)}{\vlambda^{\uparrow}(\ma)}.
\]
\end{corollary}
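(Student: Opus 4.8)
The plan is to derive this as a direct specialization of the preceding lemma, which states that $\vsigma(e^{\ma}\mb e^{-\ma}) \prec_w \vsigma(\mb)\exp(2\vdelta(\ma))$. First I would handle the fact that here we have $\ma\mb\ma^{-1}$ rather than a matrix exponential conjugation. Since $\ma \succ 0$ is strictly positive definite, it is in particular Hermitian and invertible, so I can write $\ma = e^{\mc}$ where $\mc := \log \ma$ is the (well-defined, Hermitian) matrix logarithm of $\ma$. Substituting, we get $\ma\mb\ma^{-1} = e^{\mc}\mb e^{-\mc}$, which is exactly the form to which the lemma applies. Thus the lemma immediately yields
\[
\vsigma(\ma\mb\ma^{-1}) \prec_w \vsigma(\mb)\exp(2\vdelta(\mc)).
\]

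The remaining work is to evaluate $\exp(2\vdelta(\mc))$ in terms of the eigenvalues of $\ma$. Since $\mc = \log\ma$ is Hermitian, I would invoke the closed-form expression for the discrepancy values of a Hermitian matrix established earlier in the excerpt, namely $\vdelta(\mc) = |\vlambda^{\downarrow}(\mc) - \vlambda^{\uparrow}(\mc)|/2$. Because $\ma \succ 0$, its eigenvalues are positive and the eigenvalues of $\mc = \log\ma$ are precisely $\log\lambda_i(\ma)$, with the logarithm preserving the ordering (it is monotone increasing). Hence $\lambda^{\downarrow}_i(\mc) = \log\lambda^{\downarrow}_i(\ma)$ and $\lambda^{\uparrow}_i(\mc) = \log\lambda^{\uparrow}_i(\ma)$, so that
\[
2\vdelta(\mc) = \big|\log\vlambda^{\downarrow}(\ma) - \log\vlambda^{\uparrow}(\ma)\big|.
\]

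Finally I would exponentiate entrywise. Since $\lambda^{\downarrow}_i(\ma) \geq \lambda^{\uparrow}_i(\ma) > 0$ for each $i$ (the downward-sorted entry dominates the upward-sorted one pointwise), the absolute value can be dropped, giving $2\delta_i(\mc) = \log\lambda^{\downarrow}_i(\ma) - \log\lambda^{\uparrow}_i(\ma) = \log\big(\lambda^{\downarrow}_i(\ma)/\lambda^{\uparrow}_i(\ma)\big)$, whence $\exp(2\delta_i(\mc)) = \lambda^{\downarrow}_i(\ma)/\lambda^{\uparrow}_i(\ma)$. Assembling the entries into a vector produces exactly $\vlambda^{\downarrow}(\ma)/\vlambda^{\uparrow}(\ma)$, completing the proof. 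The only delicate point—and the one I would state carefully rather than the main obstacle—is ensuring the sorting conventions line up: that $\vdelta(\mc)$ is formed from the gaps $\log\lambda^{\downarrow}_i - \log\lambda^{\uparrow}_i$ and then reordered decreasingly, matching the ordering implicitly used when writing $\vlambda^{\downarrow}(\ma)/\vlambda^{\uparrow}(\ma)$ on the right-hand side; since $\log$ is monotone this reordering is consistent, so no genuine difficulty arises and the whole argument is essentially a substitution followed by the Hermitian discrepancy-value formula.
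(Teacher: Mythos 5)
Your overall route---substituting $\ma = e^{\mc}$ with $\mc = \log\ma$ into the preceding lemma and then evaluating $\vdelta(\mc)$ via the Hermitian closed form---is exactly the derivation the paper intends (the paper offers nothing beyond ``the previous lemma implies the following''). However, your final identification $\exp(2\delta_i(\mc)) = \lambda_i^{\downarrow}(\ma)/\lambda_i^{\uparrow}(\ma)$ is false, and this is a genuine gap rather than a sorting technicality. The discrepancy values of a Hermitian matrix come in \emph{pairs}: writing $\mu_i = \log\lambda_i(\ma)$ in decreasing order, the paper's formula gives $\delta_{2k-1}(\mc) = \delta_{2k}(\mc) = (\mu_k - \mu_{n-k+1})/2$, so that
\[
\exp\big(2\vdelta(\mc)\big) = \Big(\tfrac{\lambda_1}{\lambda_n}, \tfrac{\lambda_1}{\lambda_n}, \tfrac{\lambda_2}{\lambda_{n-1}}, \tfrac{\lambda_2}{\lambda_{n-1}}, \dots\Big),
\qquad\text{whereas}\qquad
\frac{\vlambda^{\downarrow}(\ma)}{\vlambda^{\uparrow}(\ma)} = \Big(\tfrac{\lambda_1}{\lambda_n}, \tfrac{\lambda_2}{\lambda_{n-1}}, \dots, \tfrac{\lambda_n}{\lambda_1}\Big).
\]
These disagree from the second entry onward, and the lemma's multiplier (left) is entrywise \emph{larger}, i.e.\ the bound it yields is strictly weaker than the corollary's. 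Concretely, for $n=2$, $\ma = \Diag(t,1)$ with $t>1$ gives $\exp(2\vdelta(\log\ma)) = (t,t)$ versus $(t,1/t)$; taking $\mb = \mj_2$ yields $\vsigma(\ma\mb\ma^{-1}) = (t, 1/t)$, so the corollary's bound is attained with equality at $k=2$ (partial sum $t+1/t$) and therefore cannot be recovered from the weaker $(t,t)$ bound. Relatedly, your intermediate claim that $\lambda_i^{\downarrow}(\ma) \geq \lambda_i^{\uparrow}(\ma)$ for \emph{every} $i$ (used to drop the absolute value) fails for all $i > n/2$.

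What your argument actually establishes is $\vsigma(\ma\mb\ma^{-1}) \prec_w \vsigma(\mb)\exp\big(2\vdelta(\log\ma)\big)$ with the paired multiplier---correct, but weaker than the statement. To obtain the corollary as written one needs a different ingredient; the standard one is Horn's log-majorization for products, $\prod_{i=1}^k \sigma_i(\ma\mb\ma^{-1}) \leq \prod_{i=1}^k \sigma_i(\ma)\,\sigma_i(\mb)\,\sigma_i(\ma^{-1}) = \prod_{i=1}^k \sigma_i(\mb)\,\lambda_i^{\downarrow}(\ma)/\lambda_i^{\uparrow}(\ma)$ for $k=1,\dots,n$, combined with the fact that weak log-majorization implies weak majorization for nonnegative decreasing vectors; this proves the stated inequality directly and shows it is tight. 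In fairness, the paper asserts the corollary with no proof beyond citing the lemma, so the same pitfall is arguably latent in the source itself; but as written, your last step does not go through.
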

These inequalities help us solve some spectral optimization problems, for instance:
\begin{corollary}
 Let $\ma$ be a positive definite matrix and $\mb \in \C^{n\times n}$ the eigendecomposition $\ma = \mv \mLambda \mv^*$ and singular value decompositions $\mb = \mq \mD \mP^*$. Assume that $\mLambda = \diag([\lambda_1,\cdots, \lambda_n])$ and $\mD = \diag([d_1,\cdots, d_n])$ where $\lambda_1 \geq \cdots \geq \lambda_n$ and $d_1 \geq \cdots \geq d_n$. Then we have
\begin{equation}
 (\mv\mr_n(\tfrac{\pi}{4})\mq^*,\mv\mr_n(\tfrac{\pi}{4})\mP^*) \in\argmax_{\mU_1,\mU_2 \in \mathbf{U}(n)} \vertiii{\ma\mU_1\mb\mU^*_2\ma^{-1}}.
    \label{opt12}
\end{equation}
\end{corollary}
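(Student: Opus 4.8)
The plan is to reduce this maximization problem to the commutator-type bound established in Theorem~\ref{thm1} (and its underlying majorization inequality~\ref{wineq1}) by combining the previous corollary's upper bound with an explicit construction that attains it. First I would invoke the preceding corollary: for any unitaries $\mU_1,\mU_2$, writing $\tilde{\mb} = \mU_1 \mb \mU_2^*$, we have $\vsigma(\ma \tilde{\mb} \ma^{-1}) \prec_w \vsigma(\tilde{\mb})\,\vlambda^{\downarrow}(\ma)/\vlambda^{\uparrow}(\ma)$. Since $\vsigma(\tilde{\mb}) = \vsigma(\mb) = \vD$ is invariant under the unitary pre- and post-multiplications, the right-hand side is a fixed vector not depending on $\mU_1,\mU_2$. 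By Fan's dominance theorem this gives a uniform upper bound $\vertiii{\ma\mU_1\mb\mU_2^*\ma^{-1}} \leq \vertiii{\,\operatorname{diag}\!\big(\vsigma(\mb)\,\vlambda^{\downarrow}(\ma)/\vlambda^{\uparrow}(\ma)\big)}$ valid for every unitarily invariant norm.

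Next I would show that the proposed pair attains this bound. Setting $\mU_1 = \mv\mr_n(\tfrac{\pi}{4})\mq^*$ and $\mU_2 = \mv\mr_n(\tfrac{\pi}{4})\mP^*$, I would substitute $\mb = \mq \mD \mP^*$ and $\ma = \mv\mLambda\mv^*$ and simplify. The key point is that $\mU_1 \mb \mU_2^* = \mv\mr_n(\tfrac{\pi}{4})\mq^* (\mq\mD\mP^*) \mP\mr_n(\tfrac{\pi}{4})^*\mv^* = \mv\,\mr_n(\tfrac{\pi}{4})\mD\mr_n(\tfrac{\pi}{4})^*\,\mv^*$, so that after conjugating out the common $\mv$ the quantity of interest becomes $\vsigma\big(\mLambda\,\mr_n\mD\mr_n^*\,\mLambda^{-1}\big)$ with $\mr_n = \mr_n(\tfrac{\pi}{4})$. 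Because $\mr_n(\tfrac{\pi}{4})$ is a direct sum of $2\times 2$ rotations pairing index $i$ with $n-i+1$, the conjugated matrix $\mr_n\mD\mr_n^*$ has a block-antidiagonal/$2\times 2$-block structure on those same index pairs, exactly as exploited in the proof of Theorem~\ref{thm1}. A direct $2\times 2$ computation on each $\{i,n-i+1\}$ block then yields singular values equal to $\sigma_i(\mb)\,\lambda_i(\ma)/\lambda_{n-i+1}(\ma)$ paired with $\sigma_{n-i+1}(\mb)\,\lambda_{n-i+1}(\ma)/\lambda_i(\ma)$, whose multiset coincides with $\vsigma(\mb)\,\vlambda^{\downarrow}(\ma)/\vlambda^{\uparrow}(\ma)$, matching the upper bound.

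I expect the main obstacle to be the explicit $2\times 2$ block computation and the bookkeeping needed to verify that the resulting singular value multiset is precisely $\vsigma(\mb)\,\vlambda^{\downarrow}(\ma)/\vlambda^{\uparrow}(\ma)$ rather than merely majorized by it; one must check the signs and the interplay between the rotation angle $\tfrac{\pi}{4}$ and the reciprocal eigenvalue weights so that the extremal configuration is genuinely attained. A subtlety is that, unlike the Hermitian commutator case where the extremal block is skew-Hermitian and antidiagonal, here the map $\mD \mapsto \mLambda(\mr_n\mD\mr_n^*)\mLambda^{-1}$ need not produce an antidiagonal block, so the singular values of each $2\times 2$ block must be read off directly; I would rely on the fact that at $\theta = \tfrac{\pi}{4}$ the rotation symmetrically mixes each pair, which is what makes the bound tight. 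Once attainment is confirmed, Fan's dominance theorem immediately promotes the single majorization equality to optimality in every unitarily invariant norm, completing the argument.
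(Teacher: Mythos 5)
Your overall plan---derive a uniform upper bound from the preceding corollary, since $\vsigma(\mU_1\mb\mU_2^*)=\vsigma(\mb)$, and then try to attain it at the stated pair and finish with Fan's dominance theorem---is exactly the technique the paper uses for Theorem~\ref{thm1}, and since the paper offers no separate proof of this corollary (it only says ``similarly''), your reduction is the intended route; the upper-bound half is sound. The genuine gap is the attainment step, precisely the $2\times 2$ computation you flag as the ``main obstacle'' but never carry out. Carrying it out refutes your claim. On the index pair $\{i,n-i+1\}$, write $a=d_i$, $b=d_{n-i+1}$, $t=\lambda_i/\lambda_{n-i+1}\geq 1$. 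The block of $\mr_n(\tfrac{\pi}{4})\mD\mr_n(\tfrac{\pi}{4})^*$ is $\tfrac12\bigl(\begin{smallmatrix} a+b & a-b\\ a-b & a+b\end{smallmatrix}\bigr)$, so the corresponding block of $\mLambda\mr_n(\tfrac{\pi}{4})\mD\mr_n(\tfrac{\pi}{4})^*\mLambda^{-1}$ is $\tfrac12\bigl(\begin{smallmatrix} a+b & (a-b)t\\ (a-b)/t & a+b\end{smallmatrix}\bigr)$. Its two singular values have product $ab$, agreeing with the bound pair $(at,\,b/t)$, but their squared sum is $\tfrac14\bigl(2(a+b)^2+(a-b)^2(t^2+t^{-2})\bigr)$, which is \emph{strictly smaller} than $a^2t^2+b^2t^{-2}$ whenever $t>1$; the cleanest instance is $a=b$, where the block degenerates to $a\mi_2$ with singular values $(a,a)$ instead of $(at,\,a/t)$. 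So the multiset does not ``coincide with'' $\vsigma(\mb)\,\vlambda^{\downarrow}(\ma)/\vlambda^{\uparrow}(\ma)$; it is strictly submajorized by it, and the proposed configuration does not attain the bound.

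The failure is not a fixable bookkeeping issue: it shows the stated maximizer itself cannot be certified this way, and indeed is not a maximizer. Take $\mb$ unitary ($\mD=\mi_n$): the proposed pair gives $\ma\mU_1\mb\mU_2^*\ma^{-1}=\mv\mLambda\mr_n(\tfrac{\pi}{4})\mr_n(\tfrac{\pi}{4})^*\mLambda^{-1}\mv^*=\mi_n$, with all singular values equal to one, whereas the exchange pair $(\mU_1,\mU_2)=(\mv\mq^*,\,\mv\mj_n\mP^*)$ gives $\mv\mLambda\mD\mj_n\mLambda^{-1}\mv^*$, an antidiagonal matrix with entries $d_i\lambda_i/\lambda_{n-i+1}$ in positions $(i,n-i+1)$; its singular values are exactly the bound vector $\vsigma(\mb)\,\vlambda^{\downarrow}(\ma)/\vlambda^{\uparrow}(\ma)$ (a product of two nonincreasing nonnegative sequences, hence already sorted), after which Fan dominance does yield maximality in every unitarily invariant norm. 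The structural reason $\theta=\tfrac{\pi}{4}$ worked in Theorem~\ref{thm1} but not here: in the commutator $[\mLambda,\cdot]$ the diagonal part of the symmetric block is annihilated and only the antidiagonal part survives, so rotating by $\tfrac{\pi}{4}$ costs nothing; under the similarity $\mLambda(\cdot)\mLambda^{-1}$ the diagonal part survives unamplified, so the weight that $\mr_n(\tfrac{\pi}{4})$ places on it is wasted. A correct proof must replace the claimed extremal pair by the exchange-type configuration above; as stated (and as you propose to prove it), the attainment claim is false.
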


%%% Local Variables:
%%% mode: latex
%%% TeX-master: "main"
%%% End:

\section{Calculation of discrepancy values}
\label{sec: calculation}
As shown earlier, for Hermitian $\ma$, we know the discrepancy values in closed form: $\delta_1(\ma),\delta_2(\ma)=\frac{|\lambda_1-\lambda_n|}{2}, \delta_3(\ma),\delta_4(\ma)=\frac{|\lambda_2-\lambda_{n-1}|}{2}$, $\ldots$. We can also find the discrepancy of some other types of matrices easily. Indeed, since $\vdelta(e^{i\theta}\ma)=\vdelta(\ma)$, and using the formula for discrepancy of Hermitian matrices,  one can easily show the following simple yet intriguing result:
\begin{proposition}
For any normal matrix whose eigenvalues lie on a straight line in the complex plane and are symmetric about the origin we have $\vdelta(\ma)=\vsigma(\ma)$.
\end{proposition}
\begin{corollary}
\label{coroll:antisymm}
The singular and discrepancy values of a real skew-symmetric matrix are equal. Furthermore, if $\ma$ and $\mb$ are real symmetric matrices, we have
\[
    \vdelta([\ma,\mb])=\vsigma([\ma,\mb]).
\]
\end{corollary}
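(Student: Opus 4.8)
The plan is to reduce both assertions to the Proposition immediately preceding this corollary, which states that $\vdelta(\ma)=\vsigma(\ma)$ whenever $\ma$ is normal, its eigenvalues lie on a straight line in the complex plane, and they are symmetric about the origin. The whole corollary will follow once I verify that the relevant matrices satisfy these three hypotheses.

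First I would dispatch the anti-symmetric case. Let $\ma$ be real with $\ma^{T}=-\ma$. Since $\ma$ is real we have $\ma^{*}=\ma^{T}=-\ma$, so $\ma$ is skew-Hermitian; in particular $\ma\ma^{*}=-\ma^{2}=\ma^{*}\ma$, so $\ma$ is normal. Skew-Hermitian matrices have purely imaginary spectrum, so $\vlambda(\ma)$ lies on the imaginary axis, which is a straight line through the origin. Because $\ma$ is also real, its eigenvalues are closed under complex conjugation; combined with the purely imaginary constraint, this pairs each eigenvalue $i\mu$ with $-i\mu$, so the spectrum is symmetric about the origin. All three hypotheses of the quoted Proposition hold, giving $\vdelta(\ma)=\vsigma(\ma)$.

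Second, I would show the commutator of two real symmetric matrices is itself real and anti-symmetric, and then invoke the first part. With $\ma^{T}=\ma$ and $\mb^{T}=\mb$ real, the transpose computation $[\ma,\mb]^{T}=(\ma\mb-\mb\ma)^{T}=\mb^{T}\ma^{T}-\ma^{T}\mb^{T}=\mb\ma-\ma\mb=-[\ma,\mb]$ shows $[\ma,\mb]$ is anti-symmetric, and it is clearly real. Applying the first part to $\mathbf{C}:=[\ma,\mb]$ yields $\vdelta([\ma,\mb])=\vsigma([\ma,\mb])$.

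I do not anticipate a real obstacle: both statements are corollaries of the quoted Proposition once the structural facts are in place. The only point that deserves a moment of care is checking that a real anti-symmetric matrix genuinely meets the ``line through the origin, symmetric about the origin'' hypothesis, and this rests precisely on combining skew-Hermiticity (imaginary spectrum) with realness (conjugate-closed spectrum); the remainder is the single transpose identity above.
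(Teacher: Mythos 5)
Your proposal is correct and follows exactly the route the paper intends: the corollary is derived from the preceding proposition by observing that a real anti-symmetric matrix is normal with purely imaginary spectrum symmetric about the origin (conjugate pairs $\pm i\mu$ forced by realness), and that the commutator of two real symmetric matrices is real anti-symmetric via the transpose identity $[\ma,\mb]^{T}=-[\ma,\mb]$. The paper leaves these verifications implicit, and your write-up supplies them accurately with no gaps.
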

%In the next parts, we talk about computing such values for other classes of matrices.
\subsection{Computing discrepancy for normal matrices}
We know that $\vdelta(\ma^*)=\vdelta(\ma)$ and $\vdelta(\mU^*\ma\mU)=\vdelta(\ma)$ for any unitary matrix $\mU$; hence, if $\ma$ is a normal matrix with the eigenvalue decomposition $\ma = \mU^* \mLambda \mU$ then $\vdelta(\ma) = \vdelta(\mLambda)$. Consequently, 
\begin{equation}
  \label{eq:12}
    \sum_{i=1}^k \delta_i(\ma) = \min_{\alpha \in \mathbb{C}}\; \max_{1\leq i_{1}<i_{2}<i_{k}\leq n}\; \sum_{j=1}^k |\lambda_{i_{j}}-\alpha|.
\end{equation}
Problem~\eqref{eq:12} can be reformulated as the second-order cone programming problem:
\begin{equation}
  \label{eq:13}
    \begin{split}
        \sum_{i=1}^k \delta_i(\ma) = \min_{\alpha \in \mathbb{C}, q \in \mathbb{R}, \vu, \vx \in \mathbb{R}^n} \quad & \mathbf{1}_n^T\vu+k q, \\
        \text{subject to }\quad & \vu \geq \vx - q\mathbf{1}_n,\quad \vx \geq |\mlambda - \alpha\mathbf{1}_n|, \quad \vu \geq 0,
    \end{split}
\end{equation}
where $|\cdot|$ denotes the element-wise modulus of a complex vector. The optimal objective function value of~\eqref{eq:13} equals $\sum_{i=1}^k \delta_i(\ma)$. %Moreover, for some special cases such as Hermitian and skew-Hermitian matrices, the optimization problem has a closed form solution. 

Notice that for any normal matrix $\ma$ we have $\delta_1(\ma)=\delta_2(\ma)=R$, where $R$ corresponds to the radius of the smallest circle containing all the eigenvalues; see Fig.~\ref{fig:mesh1}. Also, we can see that the optimum $\alpha$ which minimizes $\|\ma-\alpha\mi_n\|_{(n)}$ is just the geometric median of the eigenvalues in the two-dimensional complex plane.
\begin{figure}[t]
    \centering
    \includegraphics[width=0.4\textwidth]{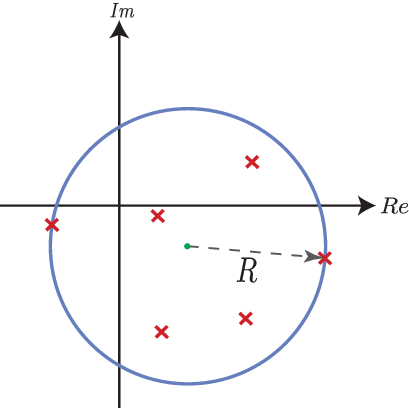}
    \caption{The minimal enclosing circle of the eigenvalues}
    \label{fig:mesh1}
    \vspace{-5pt}
\end{figure}

\vspace*{-5pt}
\subsection{SDP representation for discrepancy seminorms}
From Theorem~\ref{lemma_min}, we know that $\|\ma\|_{(k)}^{\delta} = \min_{\alpha\in\mathbb{C}} \|\ma-\alpha\mi_n\|_{(k)}$, where the latter is a convex function in $\alpha$. Hence, we can find $\vdelta(\ma)$ by employing a series of nonsmooth convex optimization methods. We propose below a more convenient way to compute the discrepancy values via semidefinite programming (SDP). To that end, recall that
\begin{equation}
  \label{eq:7}
    \sum_{i=1}^n \delta_i(\ma) = \max_{\substack{\mU^*\mU = \mi_n \\ \mv^*\mv = \mi_n \\ \trace(\mU^*\mv)=0}} \R \trace(\ma \mv \mU^*).
\end{equation}
If we let $\mm = \scriptsize\begin{bmatrix}
\mU^*\\
\mv^*
\end{bmatrix}\begin{bmatrix}
\mU & \mv
\end{bmatrix}$ and $\mathcal{H}(\ma)=\scriptsize\begin{bmatrix}0 & \ma\\ \ma^* &0\end{bmatrix}$, then using~\eqref{eq:7} we can write
\begin{equation}
    \sum_{i=1}^n \delta_i(\ma) = \max_{\substack{\mm = {\scriptsize\begin{bmatrix}
    \mi_n & \mk\\
    \mk^* & \mi_n
    \end{bmatrix}} \succeq 0\\ \trace(\mk)=0}} \tfrac{1}{2} \R \trace\bigl(\mathcal{H}(\ma)\mm\bigr).
\end{equation}

To derive SDP formulations for other $\sum_{j=1}^k \delta_j(\ma)$, we use the fact that for any Hermitian matrix $\mx$ with eigenvalues $\lambda_1(\mx) \geq \dots \geq \lambda_n(\mx)$ we have (see~\cite{bentalNem2001})
\[
    t \geq \lambda_1(\mx) + \dots +\lambda_k(\mx) \iff \exists (\mz, s): \mz \succeq 0, \mz+s\mi_n \succeq \mx, t \geq \R \big( \trace(\mz) + k s\big),
\]
where $t$ is a non-negative real number, and $s \in \mathbb{C}$ and $\mz$ can be complex valued in general. As a result, we have the following SDP formulation for the Ky-Fan norm:
\begin{equation*}
  \begin{split}
    \|\ma\|_{(k)}    &= \min_{\mz \succeq 0, t \geq 0, s \in \mathbb{C}} t\\
    \mathcal{H}(\ma) &\preceq \mz+s\mi_{2n},\quad t \geq \R(\trace(\mz)+ks),
  \end{split}
\end{equation*}
where $k=1,\dots,n$ and $\mz$ is a $2n \times 2n$ complex matrix. We similarly also obtain
\begin{equation}
    \label{SDP_general}
    \|\ma\|_{(k)}^\delta = \min_{\substack{\mz \succeq 0, t \geq 0, s,\alpha \in \mathbb{C} \\ {\scriptsize\begin{bmatrix}
    0 & \ma-\alpha\mi_n\\
    \ma^*-\bar{\alpha}\mi_n & 0
    \end{bmatrix}} \preceq \mz+s\mi_{2n} \\ t \geq \R(\trace(\mz)+ks)}} t,
\end{equation}
where $k=1,\dots,n$ and $\mz$ is a $2n \times 2n$ complex matrix. Note that if matrix $\ma$ is real, we can let $\mz$ and other optimization variables be real.\footnote{An implementation of this formulation is available at \href{https://github.com/PouriaZ/Discrepancy}{\it https://github.com/PouriaZ/Discrepancy}}

%%% Local Variables:
%%% mode: latex
%%% TeX-master: "main"
%%% End:

\section{Discussion and extensions}

Most of the results proven in the previous sections can be extended to linear operators of the form $\ma + \gamma \mi$, where $\ma$ is a compact operator on Hilbert spaces, and $\gamma \in \mathbb{C}$. For any compact operator $\ma:H \to K$ between Hilbert spaces $H$ and $K$, we know that $\ma$ always has countably many non-negative singular values, among which $\sigma=0$ is the only possible point of accumulation. 
For any finite $k$, the operator norm $\|\ma\|_{(k)}$ is finite, hence $\delta_k(\ma + \gamma \mi)$ is well-defined. Furthermore, if the rank of the operator is not finite, we have $\delta_k(\ma + \gamma \mi) \to 0$ as $k\to \infty$; regardless of the fact that $\ma$ belongs to the trace-class or not. 
This follows by the fact that $\|\ma + \gamma \mi\|^{\delta}_{(k)} \leq \|\ma\|_{(k)}$ for any finite $k\in \mathbb{N}$ and $\sigma_k \to 0$ as $k\to \infty$.

We end the paper with a conjecture on discrepancy values.
\begin{conj}
    \label{conj:squareM}
    For the $n \times n$ square matrices $\ma$ and $\mb$ we have
    \begin{equation}
        \vsigma ([\ma,\mb]) \prec_w 2 \vdelta(\ma)\vdelta(\mb),
    \end{equation}
    where $\vdelta(\ma)\vdelta(\mb)$ is the entrywise product of the vectors of the discrepancy values.
\end{conj}
%Using Lemma~\ref{max_delta}, this conjecture can be equivalently stated that for any unitary matrix $\mU$, we have
%$$
%    \vdelta(\mU[\ma,\mb]) \prec_w 2 \vdelta(\ma)\vdelta(\mb).
%$$
For the $2 \times 2$ matrix $\ma=[A_{ij}]$, for $i=1,2$, we can show that
\[
    \delta_i(\ma) = \tfrac{1}{\sqrt{2}}\Big[\tfrac{1}{2}(A_{11}-A_{22})^2+A_{12}^2+A_{21}^2 \pm |A_{12}-A_{21}|\sqrt{(A_{12}+A_{21})^2+(A_{11}-A_{22})^2}\Big]^{1/2}.
\]
By inspection, one can verify that $\vdelta(\ma) = \vsigma\big(\ma-\frac{A_{11}+A_{22}}{2}\mi_2\big)$. Thus we have
\[
    \vsigma([\ma,\mb]) = \vsigma\Big(\big[\ma-\tfrac{A_{11}+A_{22}}{2}\mi_2, \mb-\tfrac{B_{11}+B_{22}}{2}\mi_2\big]\Big) \prec_w 2 \vdelta(\ma)\vdelta(\mb).
\]
Therefore the conjecture is true for any two by two matrices. Furthermore, the conjecture has been proven for some general cases in this paper (see Remark~\ref{numb} and Corollary~\ref{generalMajIneq}). However, the conjecture in its full generality is an open problem.

%%% Local Variables:
%%% mode: latex
%%% TeX-master: "main"
%%% End:

% \section{Acknowledgement}
% This work has been funded by 
% \textsc{Nsf-Career} (IIS-1846088), \textsc{Nsf-Tripods+X} (DMS-1839258); EECS MathWorks Engineering Fellowship.

{
  \bibliographystyle{plainnat}
  \bibliography{egbib}
}
\pagebreak
\appendix
%} 

\section{The X decomposition for Hermitian matrices}
\label{apendix:Xmatrix}

In this part, we propose a decomposition for the Hermitian matrices based on their discrepancy values and vectors. First, let us introduce a family of matrices.

\begin{definition}[X matrices]
We call a square matrix which is allowed to have nonzero entries only on the main diagonal, or on the anti-diagonal, an X matrix (or star matrix).
\end{definition}
% \begin{proposition}
% The X matrix can be viewed as a bunch of 2 by 2 matrices, and the coupling is centrosymmetric. Using a unitary similarity transformation, we can turn the matrix to be like a block diagonal matrix, where each block is a two by two matrix. The eigenvalues of an X matrix is
% \[
% \lambda_i = \frac{a_i+a_{n-i+1}}{2}\pm \sqrt{\Big(\frac{a_i-a_{n-i+1}}{2}\Big)^2+b_i b_{n-i+1}}, \qquad i=1,\dots, \floor{\tfrac{n}{2}}
% \]
% where $a_i$ are the entries on the main diagonal and $b_i$ are the entries on the anti-diagonal. If $n$ is odd, we have another eigenvalue $\lambda_{\ceil{\tfrac{n}{2}}} = a_{\ceil{\tfrac{n}{2}}}$, which is the element at the center of the matrix. 
% \end{proposition}

\begin{remark}
The set of X matrices are unitarily similar to matrices of the form $B_1 \oplus B_2 \oplus \cdots \oplus B_n$, where $B_i$ are two by two matrices, plus one scalar if it has odd dimensionality.
% \[
% B_i = \begin{bmatrix}
% a_i & b_i \\
% c_i & d_i
% \end{bmatrix},
% \]
% or $B_i = \begin{bmatrix}
% a_i
% \end{bmatrix}$.
\end{remark}

% \begin{remark}
% The set of X matrices form an Algebra over $\mathbb{R}$ or $\mathbb{C}$. Also, the commutator of two X matrices is another X matrix.
% \end{remark}
% \begin{definition}
% A square matrix is called a skew-centrosymmetric matrix if $\ma_{i,j}=-\bar{\ma}_{n-i+1,n-j+1}$. 
% % More specifically, $\ma_{2n\times 2n}$ with the following form
% % \[
% % \ma = \begin{bmatrix} L_{n\times n} & R_{n\times n} \\ -R^*_{n\times n} & -L^*_{n\times n}\end{bmatrix},
% % \]
% % or the matrix $\ma_{(2n+1)\times (2n+1)}$ with the following form
% % \[
% % \ma = \begin{bmatrix} L_{n\times n} & q_{n\times 1} & R_{n\times n} \\ p_{1\times n} & m_{1\times 1} & -\bar{p}_{1\times n} \\ -R^*_{n\times n} & -\bar{q}_{n\times 1} & -L^*_{n\times n}\end{bmatrix},
% % \]
% % is a skew-centrosymmetric matrix. Note that $\bar{q}$ denotes the operator of elementwise conjugation on the vector $q$.
% \end{definition}
% \begin{notation}
Let the notations $CX(a_1, \dots, a_n)(b_1, \dots, b_n)$ and $CX(a_1, \dots, a_n)(c)(b_1, \dots, b_n)$ denote the following centrosymmetric X matrices, respectively:
$$
\scalemath{0.9}{\begin{bmatrix}
a_1 & 0 &\hdots & 0 & \bar{b}_1 \\
0 & \ddots & & \adots & 0 \\
\vdots & & \begin{matrix}
a_n & \bar{b}_n \\
b_n & \bar{a}_n
\end{matrix} & & \vdots\\
 0 & \adots & & \ddots & 0 \\
b_1 & 0 &\hdots & 0 & \bar{a}_1
\end{bmatrix}, 
\qquad
\begin{bmatrix}
a_1 & 0 &\hdots & 0 & \bar{b}_1 \\
0 & \ddots & & \adots & 0 \\
\vdots & & \begin{matrix}
a_n & 0 & \bar{b}_n \\
0 & c & 0 \\
b_n & 0 & \bar{a}_n
\end{matrix} & & \vdots\\
 0 & \adots & & \ddots & 0 \\
b_1 & 0 &\hdots & 0 & \bar{a}_1
\end{bmatrix}}.
$$
% sd
% $$
% \begin{bmatrix}
% a_1 & 0 &\hdots & 0 & b_1 \\
% 0 & \ddots & & \adots & 0 \\
% \vdots & & \begin{matrix}
% a_n & b_n \\
% -\bar{b}_n & -\bar{a}_n
% \end{matrix} & & \vdots\\
%  0 & \adots & & \ddots & 0 \\
% -\bar{b}_1 & 0 &\hdots & 0 & -\bar{a}_1
% \end{bmatrix}
% $$
% and $SCX(a_1, \dots, a_n)(c)(b_1, \dots, b_n)$ denotes
% $$
% \begin{bmatrix}
% a_1 & 0 &\hdots & 0 & b_1 \\
% 0 & \ddots & & \adots & 0 \\
% \vdots & & \begin{matrix}
% a_n & 0 & b_n \\
% 0 & c & 0 \\
% -\bar{b}_n & 0 & -\bar{a}_n
% \end{matrix} & & \vdots\\
%  0 & \adots & & \ddots & 0 \\
% -\bar{b}_1 & 0 &\hdots & 0 & -\bar{a}_1
% \end{bmatrix}.
% $$
% Moreover, let $CX(a_1, \dots, a_n)(b_1, \dots, b_n)$ and $CX(a_1, \dots, a_n)\circled{c}(b_1, \dots, b_n)$ denote the centrosymmetric version of the above matrices; i.e. the matrix $\ma$ in $CX$ has the property that $\ma_{i,j}=\bar{\ma}_{n-i+1,n-j+1}$.
% \end{notation}

% \begin{definition}[Centrosymmetric matrix]
% A square matrix is centrosymmetric if . Similarly, we denote the centrosymmetric X matrix by $CX(a_1, \dots, a_n)(c)(b_1, \dots, b_n)$
% \end{definition}

\begin{proposition}[X decomposition]
Any $n \times n$ Hermitian matrix $\ma$ can be decomposed as $\ma = \mU \mx \mv^*$, where $\mx$ is a centrosymmetric X matrix; moreover, $\mU$ and $\mv$ are unitary matrices such that $\mU^*\mv=\mj_n$, the exchange matrix.
%(Also, $\sqrt{2}\mU+\sqrt{2}\mv$ is a unitary matrix).
\end{proposition}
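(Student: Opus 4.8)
The plan is to build the decomposition explicitly out of the left and right discrepancy vectors of $\ma$, which we already have in closed form for Hermitian matrices. I would start from an eigendecomposition $\ma=\mq\mLambda\mq^*$ with orthonormal eigenvectors $\vv_1,\dots,\vv_n$ and non-increasing eigenvalues, and recall the discrepancy vectors $\vx_{2k-1}=(-\vv_k+\vv_{n-k+1})/\sqrt2$, $\vy_{2k-1}=(-\vv_k-\vv_{n-k+1})/\sqrt2$, $\vx_{2k}=\vy_{2k-1}$, $\vy_{2k}=\vx_{2k-1}$ for $k=1,\dots,\floor*{n/2}$, together with the self-paired central vector $\vx_n=\vy_n=\vv_{(n+1)/2}$ when $n$ is odd. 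The whole point is that the $k$th pair of vectors lives entirely inside $\mathrm{span}(\vv_k,\vv_{n-k+1})$, and $\ma$ leaves this two-dimensional space invariant, so everything decouples pair by pair.

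First I would assemble $\mU$ and $\mv$ from these vectors, but placing the two members of the $k$th pair into columns $k$ and $n-k+1$ rather than into columns $2k-1$ and $2k$: set the $k$th column of $\mU$ (resp.\ $\mv$) to $\vx_{2k-1}$ (resp.\ $\vy_{2k-1}$) and its $(n-k+1)$th column to $\vx_{2k}$ (resp.\ $\vy_{2k}$), with $\vv_{(n+1)/2}$ in the central column for odd $n$. Within each pair the vectors $\{\vx_{2k-1},\vx_{2k}\}$ are orthonormal, and the pairwise spans are mutually orthogonal for distinct $k$, so $\mU$ (and likewise $\mv$) is unitary. This reordering is exactly what turns the per-pair $2\times 2$ block structure into the global main-diagonal/anti-diagonal pattern of an X matrix and the full exchange matrix.

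The key computation is then twofold. For $\mU^*\mv$: inside the $k$th pair one checks $\langle\vx_{2k-1},\vy_{2k-1}\rangle=\langle\vx_{2k},\vy_{2k}\rangle=0$ while $\langle\vx_{2k-1},\vy_{2k}\rangle=\langle\vx_{2k},\vy_{2k-1}\rangle=1$, so after the reordering the surviving $1$'s sit exactly at positions $(k,n-k+1)$ and $(n-k+1,k)$; ranging over $k$ these fill the anti-diagonal and give $\mU^*\mv=\mj_n$, with the central $1$ supplied by $\vv_{(n+1)/2}$ for odd $n$. For $\mx:=\mU^*\ma\mv$: invariance of each eigenpair span forces all cross-pair entries to vanish, and a direct $2\times 2$ evaluation yields $\mx_{k,k}=\mx_{n-k+1,n-k+1}=\tfrac{\lambda_k-\lambda_{n-k+1}}{2}$ and $\mx_{k,n-k+1}=\mx_{n-k+1,k}=\tfrac{\lambda_k+\lambda_{n-k+1}}{2}$. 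These occupy only the main diagonal and anti-diagonal and, being real, trivially obey the conjugation pattern of $CX$, so $\mx$ is a centrosymmetric X matrix (with central entry $\lambda_{(n+1)/2}$ in the odd case). Since $\mU,\mv$ are unitary, $\ma=\mU\mU^*\ma\mv\mv^*=\mU\mx\mv^*$, which is the claimed decomposition.

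I expect the only delicate points to be purely organizational rather than analytic: pinning down the permutation that converts the pairwise $\{2k-1,2k\}$ block structure into the global X and exchange patterns, and handling the odd-dimensional central block as a separate scalar case. The algebra itself is light, because $\ma$ being Hermitian kills every interaction between distinct eigenpairs and collapses all surviving entries to real combinations of $\lambda_k$ and $\lambda_{n-k+1}$.
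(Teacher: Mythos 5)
Your proposal is correct and is essentially the paper's own construction: the paper sets $\mU = \mq\mr_n(\tfrac{\pi}{4})$ and $\mv = \mq\mj_n\mr_n(\tfrac{\pi}{4})^*$, whose columns are exactly the discrepancy vectors $(\pm\vv_k\pm\vv_{n-k+1})/\sqrt{2}$ (up to signs) placed in positions $k$ and $n-k+1$, i.e.\ the $\pi/4$ rotation within each paired eigenplane that you carry out column by column. The paper states the result as a one-line matrix identity $\ma = \mq\mr_n(\tfrac{\pi}{4})\,\bigl(\mr_n(\tfrac{\pi}{4})^*\mLambda\mj_n\mr_n(\tfrac{\pi}{4})^*\bigr)\,(\mq\mj_n\mr_n(\tfrac{\pi}{4})^*)^*$, while you verify the same facts entrywise, so the two arguments coincide in substance.
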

\begin{proof}
Let $\mr_n(\cdot)$ be the $n \times n$ orthogonal matrix defined in~\ref{defR}, then using the eigenvalue decomposition of $\ma$ we have
\[
\ma = \mq \mLambda \mq^* = \mq \mr_n(\tfrac{\pi}{4}) (\mr_n(\tfrac{\pi}{4})^* \mLambda \mj_n \mr_n(\tfrac{\pi}{4})^*) (\mq\mj_n\mr_n(\tfrac{\pi}{4})^*)^*.
\]
Now let $\mU = \mq \mr_n(\tfrac{\pi}{4})$, $\mv = \mq\mj_n\mr_n(\tfrac{\pi}{4})^*$, and $\mx = \mr_n(\tfrac{\pi}{4})^* \mLambda \mj_n \mr_n(\tfrac{\pi}{4})^*$. One can easily verify all the conditions in the proposition are satisfied.
\end{proof}

% \begin{proposition}
In the previous proposition, we can show that the columns of $\mU$ and $\mv$ are vectors $\vx_i$ and $\vy_i$ in the definition of $\delta$ in~\ref{main_def}. Also if $n$ is an even number we have 
\[
\mx = CX(\delta_1(\ma),\delta_3(\ma),\dots)(\alpha_1^*(\ma),\alpha_3^*(\ma), \dots),
\]
and otherwise we have
\[
\mx = CX(\delta_1(\ma),\delta_3(\ma),\dots)(\alpha_n^*)(\alpha_1^*(\ma),\alpha_3^*(\ma), \dots),
\]
where $\delta_{i}(\ma)=(\lambda_{i}-\lambda_{n-i+1})/2$ and $\alpha^*_{i}(\ma)=(\lambda_{i}+\lambda_{n-i+1})/2$.

\end{document}